\definecolor{linkcolour}{rgb}{0,0.2,0.6}
\newtheoremstyle{mytheoremstyle} % name
        {\topsep}                    % Space above
        {\topsep}                    % Space below
        {\itshape\fontfamily{ppl}\selectfont}                   % Body font
        {}                           % Indent amount
        {\fontfamily{ppl}\selectfont\bfseries\color{black}}                   % Theorem head font
        {.}                          % Punctuation after theorem head
        {.5em}                       % Space after theorem head
        {}  % Theorem head spec (can be left empty, meaning ‘normal’)
\theoremstyle{mytheoremstyle}
\newtheorem{theo}{Theorem}[section]
\newtheorem{prop}[theo]{Proposition}
\newtheorem{coro}[theo]{Corollary}
\newtheorem{lemm}[theo]{Lemma}
\newtheorem{Fact}[theo]{Fact}
\renewenvironment{proof}[1][\proofname]{\par
  \pushQED{\qed}%
  %% Replace "\topsep6\p@\@plus6\p@" with "\topsep0\p@"
  %% (or, using LaTeX-like syntax: '\setlength{\topsep}{0pt}')
  \fontfamily{ppl} \topsep6\p@\@plus6\p@\relax
  \trivlist
  \item[\hskip\labelsep\itshape\bfseries#1\@addpunct{.}]\ignorespaces}{%
  \popQED\endtrivlist\@endpefalse
}
\def\R{\mathbb{R}}
\def \N{\mathbb{N}}
\def \P{\mathbb{P}} % proba 
\def \E{\mathbb{E}} % esperance 
\newcommand{ \un }{\mathds{1}}
\def\T{\mathbb{T}}
\newenvironment{merci}{\textbf{Acknowledgments.}}{ }
\newtheorem{Ass}{Assumption}
\newtheorem{remark}{Remark}
\renewcommand{\T}{\mathbb{T}}
\newcommand{\X}{\mathbb{X}}
\def \Eb{\mathbf{E}}
\def \Pb{\mathbf{P}}
\renewcommand{\P}{\mathbb{P}}
\newtheorem{postita}{Post-it}
\renewcommand{\P}{\mathbb{P}}
\def\T{\mathbb{T}}
\newcommand*\bigcdot{\mathpalette\bigcdot@{.5}}
\newcommand*\bigcdot@[2]{\mathbin{\vcenter{\hbox{\scalebox{#2}{$\m@th#1\bullet$}}}}}
\newcommand{\VectCoord}[2]{#1^{(#2)}}
\newcommand{\sqrtBis}[1]{#1^{1/2}}
\newcommand{\Pbis}{\mathds{P}}
\newcommand{\Ebis}{\mathds{E}}
\newcommand{\Type}[1]{\mathfrak{t}(#1)}
\begin{document}

{\fontfamily{ppl}\selectfont

\begin{frontmatter}

%%%%%%%%%
%%%%%%%%%%

\title{Scaling limit for local times and return times of a randomly biased walk on a Galton-Watson tree}

\author{\fnms{Alexis} \snm{Kagan}\ead[label=e2]{alexis.l.kagan@amss.ac.cn}}
\address{AMSS, Chinese Academy of Sciences, Beijing, China. \printead{e2}} \vspace{0.5cm}

\runauthor{Kagan}

%Version : \today{}

\runtitle{Scaling limit for return times and local times of a randomly biased walk on a Galton-Watson tree}

\begin{abstract}
We consider a null recurrent random walk $\X$ on a super-critical Galton Watson marked tree $\T$ in the (sub-)diffusive regime. We are interested in the asymptotic behaviour of the local time of its root at $n$, which is the total amount of time spent by the random walk $\X$ on the root of $\T$ up to the time $n$, and in its $n$-th return time to the root of $\T$. We show that properly renormalized, this local time and this $n$-th return time respectively converge in law to the maximum and to an hitting time of some stable Lévy process. This paper aims in particular to extent the results of Y. Hu \cite{Hu2017Corrected}.

\end{abstract}

% \begin{keyword}[class=AenMS]
 % \kwd[MSC : Primary ] 
% \end{keyword}
 \begin{keyword}[class=AenMS]
 \kwd[MSC2020 :  ] {60K37}, {60J80}, {60F17}
 \end{keyword}
% % 62M05 Markov processes: estimation
% % 62F12 Asymptotic properties of estimators
% % 60J25 Markov processes with continuous parameter
% % 60J27 Markov chains with continuous parameter
% % 60J35 Transition functions, generators and resolvents

\begin{keyword}
\kwd{randomly biased random walks}
\kwd{branching random walks}
\kwd{Galton-Watson trees}
\kwd{local times}
\kwd{return times}
\kwd{scaling limits}
\end{keyword}

\end{frontmatter}

%%%%%%%%
%%%%%%%%

\section{Introduction}

\subsection{Random walk on a Galton-Watson marked tree}\label{RBRWT}
Given, under a probability measure $\Pb$, a $\bigcup_{k\in\N}\R^k$-valued random variable $\mathcal{P}$ ($\mathbb{R}^0$ only contains the sequence with length $0$) with $N:=\#\mathcal{P}$ denoting the cardinal of $\mathcal{P}$, we consider the following Galton-Watson marked tree $(\T,(A_x;x\in\T))$ rooted at $e$: the generation $0$ contains one marked individual $(e,A_e)=(e,0)$. For any $n\in\N^*$, assume the generation $n-1$ has been built. If it is empty, then the generation $n$ is also empty. Otherwise, for any vertex $x$ in the generation $n-1$, let $\mathcal{P}^x:=\{A_{x^{1}},\ldots,A_{x^{N(x)}}\}$ be a random variable distributed as $\mathcal{P}$ where $N(x):=\#\mathcal{P}^x$. The vertex $x$ gives progeny to $N(x)$ marked children $(x^{1},A_{x^{1}}),\ldots,(x^{N(x)},A_{x^{N(x)}})$ independently of the other vertices in generation $n-1$, thus forming the generation $n$. We assume $\Eb[N]>1$ so that $\T$ is a super-critical Galton-Watson tree with offspring $N$, that is $\Pb(\textrm{non-extinction of }\T)>0$ and we define $\Pb^*(\cdot):=\Pb(\cdot|\textrm{non-extinction of }\T)$, where E (resp. $\Eb^*$) denotes the expectation with respect to $\Pb$ (resp. $\Pb^*$) \\
For any vertex $x\in\T$, we denote by $|x|$ the generation of $x$, by $x_i$ its ancestor in generation $i\in\{0,\ldots,|x|\}$ and $x^*:=x_{|x|-1}$ stands for the parent of $x$. In particular, $x_0=e$ and $x_{|x|}=x$. For convenience, we add a parent $e^*$ to the root $e$. For any $x,y\in\T$, we write $x\leq y$ if $x$ is an ancestor of $y$ ($y$ is said to be a descendent of $x$) and $x<y$ if $x\leq y$ and $x\not=y$. We then write $\llbracket x_i,x\rrbracket:=\{x_j; j\in\{i,\ldots,|x|\}\}$. Finally, for any $x,y\in\T$, we denote by $x\land y$ the most recent common ancestor of $x$ and $y$, that is the ancestor $u$ of $x$ and $y$ such that $\max\{|z|;\; z\in\llbracket e,x\rrbracket\cap\llbracket e,y\rrbracket\}=|u|$. \\
Let us now introduce the branching potential $V:x\in\T\mapsto V(x)\in\R$: let $V(e)=A_e=0$ and for any $x\in\T\setminus{\{e}\}$
$$ V(x):=\sum_{e<z\leq x}A_z=\sum_{i=1}^{|x|} A_{x_i}. $$
Under $\Pb$, $\mathcal{E}:=(\T,(V(x);x\in\T))$ is a real valued branching random walk such that $(V(x)-V(x^*))_{|x|=1}$ is distributed as $\mathcal{P}$. We will then refer to $\mathcal{E}$ as the random environment. \\
We are now ready to introduce the main process of our study. Given a realization of the random environment $\mathcal{E}$, we define a $\T\cup\{e^*\}$-valued nearest-neighbour random walk $\X:=(X_j)_{j\in\N}$, reflected in $e^*$ whose transition probabilities are, under the quenched probabilities $\{\P^{\mathcal{E}}_z; z\in\T\cup\{e^*\}\}$: for any $x\in\T$, $\P^{\mathcal{E}}_x(X_0=x)=1$ and 
\begin{align*}
    p^{\mathcal{E}}(x,x^*)=\frac{e^{-V(x)}}{e^{-V(x)}+\sum_{i=1}^{N(x)}e^{-V(x^i)}}\;\;\textrm{ and for all } 1\leq i\leq N(x),\;\; p^{\mathcal{E}}(x,x^i)=\frac{e^{-V(x^i)}}{e^{-V(x)}+\sum_{i=1}^{N(x)}e^{-V(x^i)}}.
\end{align*}
Otherwise, $p^{\mathcal{E}}(x,u)=0$ and $p^{\mathcal{E}}(e^*,e)=1$. Let $\P^{\mathcal{E}}:=\P^{\mathcal{E}}_e$, that is the quenched probability of $\X$ starting from the root $e$ and we finally define the following annealed probabilities
$$ \P(\cdot):=\Eb[\P^{\mathcal{E}}(\cdot)]\;\;\textrm{ and }\;\;\P^*(\cdot):=\Eb^*[\P^{\mathcal{E}}(\cdot)]. $$ 
R. Lyons and R. Pemantle \cite{LyonPema} initiated the study of the randomly biased random walk $\X$. \\
When, for all $x\in\T$, $V(x)=\log\lambda$ for a some constant $\lambda>0$, the walk $\X$ is known as the $\lambda$-biased random walk on $\T\cup\{e^*\}$ and was first introduced by R. Lyons (see \cite{Lyons} and \cite{Lyons2}). The $\lambda$-biased random walk is transient unless the bias is strong enough: if $\lambda\geq\Eb[N]$ then, $\Pb^*$-almost surely, $\X$ is recurrent (positive recurrent if $\lambda>\Eb[N]$). R. Lyons, R. Pemantle and Y. Peres (see \cite{LyonsRussellPemantle1} and \cite{LyonsRussellPemantle2}), later joined by G. Ben Arous, A. Fribergh, N. Gantert, A. Hammond \cite{BA_F_G_H} and E. Aïdékon \cite{AidekonSpeed} for example, studied the transient case and payed a particular attention to the speed $v_{\lambda}:=\lim_{n\to\infty}|X_n|/n\in[0,\infty)$ of the random walk.  \\
When the bias is random, the behavior of $\X$ depends on the fluctuations of the following $\log$-Laplace transform 
\begin{align*}
    \psi(t):=\log\Eb\Big[\sum_{|x|=1}e^{-tV(x)}\Big]=\log\Eb\Big[\sum_{|x|=1}e^{-tA_x}\Big], 
\end{align*}
which we assume to be well defined on $[0,1]$ : as stated by R. Lyons and R. Pemantle \cite{LyonPema}, if $\inf_{t\in[0,1]}\psi(t)$ is positive, then $\Pb^*$-almost surely, $\X$ is transient and we refer to the work of E. Aïdékon \cite{Aidekon2008} for this case. Otherwise, it is recurrent. More specifically, G. Faraud \cite{Faraud} proved that the random walk $\X$ is $\Pb^*$-almost surely positive recurrent either if $\inf_{t\in[0,1]}\psi(t)<0$ or if $\inf_{t\in[0,1]}\psi(t)=0$ and $\psi'(1)>0$. It is null recurrent if $\inf_{t\in[0,1]}\psi(t)=0$ and $\psi'(1)\leq 0$. We refer to the case $\psi'(1)=0$ as the slow random walk on $\T$ since the largest generation reached by the walk $\X$ up to the time $n$ is of order $(\log n)^3$ (see \cite{HuShi10a} and \cite{HuShi10b}).

\subsection{Main results}

In the present paper, we focus on the null recurrent randomly biased walk $\X$ and we put ourselves in the following case for the random environment $\mathcal{E}$:
\begin{Ass}\label{Assumption1}
\begin{align}\label{DiffCase}
    \inf_{t\in[0,1]}\psi(t)=\psi(1)=0\;\;\textrm{ and }\;\;\psi'(1)<0,
\end{align}
\end{Ass}
\noindent and 
\begin{Ass}\label{Assumption2}
    the distribution of the $\bigcup_{k\in\N}\R^k$-valued random variable $\mathcal{P}$ is non-lattice.
\end{Ass}

\noindent Let us introduce
\begin{align}\label{DefKappa}
    \kappa:=\inf\{t>1;\; \psi(t)=0\},
\end{align}
and assume $\kappa\in(1,\infty)$. We make the following technical hypotheses on the random environment $\mathcal{E}$:
\begin{Ass}\label{Assumption3}  for all $\kappa>1$, there exists $\delta_1>0$ such that $\psi(t)<\infty$ for all $t\in(1-\delta_1,\kappa+\delta_1)$ and
\begin{align*}
\left \{\begin{array}{l c l}
    \Eb\Big[\sum_{|x|=1}\max(-V(x),0)e^{-\kappa V(x)}\big]+\Eb\Big[\Big(\sum_{|x|=1}e^{-V(x)}\Big)^{\kappa}\Big]<\infty  & \text{if} & \kappa\in(1,2] \\ \\
    \Eb\Big[\Big(\sum_{|x|=1}e^{-V(x)}\Big)^{2}\Big]<\infty   & \text{if} & \kappa>2.
\end{array}
\right .
\end{align*}
\end{Ass}
\noindent It has been proven that under the assumptions \ref{Assumption1}, \ref{Assumption2} and \ref{Assumption3}, both $|X_n|$ and $\max_{1\leq j\leq n}|X_j|$ are of order $n^{1-1/(\kappa\land 2)}$(see \cite{HuShi10}, \cite{Faraud}, \cite{AidRap} and \cite{deRaph1}). In other words, the random walk $\X$ is sub-diffusive for $\kappa\in(1,2]$ and diffusive for $\kappa>2$. \\
Let $x\in\T\cup\{e^*\}$, $m\in\N^*$ and introduce the local time $\mathcal{L}^m_x$ of $x$ at time $m$:
\begin{align*}
    \mathcal{L}^m_x:=\sum_{j=1}^m\un_{\{X_j=x\}}.
\end{align*}
For convenience, $\mathcal{L}^m$ will stand for the local time $\mathcal{L}^m_{e^*}$ of the parent $e^*$ of the root $e$ at time $m$. \\
The right order for $\mathcal{L}^n$ was first suggested by P. Andreoletti and P. Debs (\cite{AndDeb1}, Proposition 1.3), claiming that for all $\kappa>1$, $\P$-almost surely, $((\log \mathcal{L}^n)/\log n)_{n\in\N^*}$ converges to $1/(\kappa\land 2)$ when $n\to\infty$ and later joined by Y. Hu \cite{Hu2017Corrected} with more precise approximations: let $W_{\infty}$ be the limit of the additive martingale $(\sum_{|x|=\ell}e^{-V(x)})_{\ell\in\N}$ (which is $\Pb^*$-almost surely positive under the assumptions \ref{Assumption1}, \ref{Assumption2} and \ref{Assumption3}) and for $\kappa>2$, introduce the positive constant $c_0:=\Eb[\sum_{x\not=y;\; |x|=|y|=1}e^{-V(x)-V(y)}]/(1-e^{\psi(2)})<\infty$. Then, by (\cite{Hu2017Corrected}, Corollary 1.2), for $\kappa>2$, $\Pb^*$-almost surely, $(\mathcal{L}^n/\sqrtBis{n})_{n\in\N^*}$ converges in law under $\P^{\mathcal{E}}$ to $\sqrtBis{c_0}|\mathcal{N}|/W_{\infty}$, where $\mathcal{N}$ is a standard Gaussian random variable, centered and with variance 1. Moreover, when $\kappa<2$ (resp. $\kappa=2$), $(\mathcal{L}^n/n^{1/\kappa})_{n\in\N^*}$ (resp. $(\mathcal{L}^n/\sqrtBis{(n\log n)})_{n\in\N^*}$) is tight under $\P^{\mathcal{E}}$, for $\Pb^*$-almost every environment. \\
In the present paper, we aim to extend these results. Let us first introduce a few more notations. For all $\kappa\in(1,2]$, let 
\begin{align}\label{TailType1}
    c_{\kappa}:=\lim_{m\to\infty}m^{\kappa}\P\Big(\sum_{x\in\T}
    \un_{\{N_x^{\tau^1}=1,\; \min_{e<z<x}N_y^{\tau^1}\geq 2\}}>m\Big),
\end{align}
with the convention $\sum_{\varnothing}=0$ and for any $x\not=e^*$ and $m>0$, $N_x^m$ is define to be the number of times the oriented edge $(x^*,x)$ has been visited by the random walk $\X$ up to $m$:
\begin{align*}
    N_x^m:=\sum_{j=1}^m\un_{\{X_{j-1}=x^*,\; X_j=x\}},
\end{align*}
and $\tau^j$ stands for the $j$-th hitting time of the oriented edge $(e^*,e)$: $\tau^0=0$ and for any $j\in\N^*$
\begin{align*}
    \tau^j:=\inf\{k>\tau^{j-1};\; X_{k-1}=e^*,\; X_k=e\},
\end{align*}
with the convention $\inf\varnothing=+\infty$. The proof of the existence of $c_{\kappa}\in(0,\infty)$ under the assumptions \ref{Assumption1}, \ref{Assumption2}, \ref{Assumption3} is one of the main purposes of the paper of L. de Raphélis (\cite{deRaph1}, Proposition 2). The equation \eqref{TailType1} is  somewhat reminiscent of the result of Q. Liu \cite{Liu1}, stating that under the assumptions \ref{Assumption1}, \ref{Assumption2} and \ref{Assumption3}, the limit $c_{\infty,\kappa}:=\lim_{m\to\infty}m^{\kappa}\Pb^*(W_{\infty}>m)\in(0,\infty)$ exists. However, there is no reason to believe that $c_{\kappa}$ is equal to $c_{\infty,\kappa}$. \\
For a random process $Y$ and for any $t\geq 0$, $S(t,Y)$ denotes the maximum of $Y$ on the set $[0,t]$:
\begin{align*}
    S(t,Y):=\max_{s\in[0,t]}Y_s.
\end{align*}
Finally, let $(S_j-S_{j-1})_{j\in\N^*}$ be a sequence of\textrm{ i.i.d }real valued random variables such that $S_0=0$ and for any measurable, non-negative function $f:\R\to\R$, $\Eb[f(S_1)]=\Eb[\sum_{|x|=1}e^{-V(x)}f(V(x))]$. This implies the following many-to-one lemma: for any $\ell\in\N^*$ and measurable and non-negative function $\mathfrak{f}:\R^{\ell}\to\R$
\begin{align}\label{ManyTo1}
    \Eb\Big[\sum_{|x|=\ell}e^{-V(x)}\mathfrak{f}\big(V(x_1),\ldots,V(x_{\ell})\big)\Big]=\Eb\big[\mathfrak{f}\big(S_1,\ldots,S_{\ell}\big)\big].
\end{align}
We are now ready to sate our first result:
\begin{theo}\label{Th2}
Assume that the assumptions \ref{Assumption1}, \ref{Assumption2}, \ref{Assumption3} hold. Under $\P^*$ and $\Pb^*$-almost surely, under $\P^{\mathcal{E}}$, the following converges hold in law for the Skorokhod topology on $D([0,\infty))$
\begin{enumerate}[label=(\roman*)]
    \item if $\kappa>2$
    \begin{align*}
        \Big(\frac{1}{\sqrtBis{n}}\mathcal{L}^{\lfloor t n\rfloor};\; t\geq 0\Big)\underset{n\to\infty}{\longrightarrow}\Big(\frac{\sqrtBis{c_0}}{W_{\infty}}S(t,B);\; t\geq 0\Big),
    \end{align*}
    where $B$ is a standard Brownian motion;
    \item if $\kappa=2$
    \begin{align*}
        \Big(\frac{1}{\sqrtBis{(n\log n)}}\mathcal{L}^{\lfloor tn\rfloor};\; t\geq 0\Big)\underset{n\to\infty}{\longrightarrow}\Big(\frac{1}{W_{\infty}}\sqrtBis{(C_{\infty}c_2/2)}S(t,B);\; t\geq 0\Big),
    \end{align*}
    with $C_{\infty}:=\Eb[(\sum_{j\geq 0}e^{-S_j})^{-2}]$, see \eqref{ManyTo1} ;
    \item if $\kappa\in(1,2)$
    \begin{align*}
        \Big(\frac{1}{n^{1/\kappa}}\mathcal{L}^{\lfloor t n\rfloor};\; t\geq 0\Big)\underset{n\to\infty}{\longrightarrow}\Big(\frac{1}{W_{\infty}}(C_{\infty}c_{\kappa}|\Gamma(1-\kappa)|/2)^{1/\kappa}S(t,\VectCoord{Y}{\kappa});\; t\geq 0\Big),
    \end{align*}
    where $\Gamma$ denotes the gamma function and for any $\gamma\in(1,2)$, $\VectCoord{Y}{\gamma}$ is a Lévy process with no positive jump such that for all $\lambda\geq 0$, $\mathtt{E}[e^{\lambda \VectCoord{Y}{\gamma}_t}]=e^{t\lambda^{\gamma}}$. In particular, $\VectCoord{Y}{\gamma}_0=0$ almost surely and $\VectCoord{Y}{\gamma}$ satisfies a scaling property: $\VectCoord{Y}{\gamma}$ and $(\mathfrak{c}^{1/\gamma}\VectCoord{Y}{\gamma}_{t/\mathfrak{c}};\; t\geq 0)$ have the same law for any $\mathfrak{c}>0$. Moreover, $\sup_{t\geq 0}\VectCoord{Y}{\gamma}_t=+\infty$ almost surely.
\end{enumerate}
\end{theo}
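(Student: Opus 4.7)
The plan is to exploit the regenerative structure at the root. With $\xi_j := \tau^j - \tau^{j-1}$ for $j\geq 1$, the strong Markov property of $\X$ at $e^*$ ensures that $(\xi_j)_{j\geq 1}$ is an i.i.d.\ sequence under each quenched law $\P^{\mathcal{E}}$, and one has the duality identity $\mathcal{L}^n = \sup\{k\geq 0\,:\,\xi_1+\cdots+\xi_k\leq n\}$ up to a boundary term of size $1$. The three functional scaling limits for $\mathcal{L}^{\lfloor tn\rfloor}$ will therefore follow from appropriate functional scaling limits for the i.i.d.\ random walk $(\tau^k)_{k\in\N}$ by the continuous mapping theorem, applied to the right-continuous inverse map on the set of non-decreasing c\`adl\`ag paths (this map is continuous at any strictly increasing limit, which is the case $\Pb$-a.s.\ for both $S(\cdot,B)$ and $S(\cdot,Y^{(\kappa)})$ since the limit processes have flats only where the underlying stable subordinator jumps).

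The crucial intermediate step is the identification of the quenched tail of $\xi_1$ under $\P^{\mathcal{E}}$ for $\Pb^*$-a.e.\ environment $\mathcal{E}$. I would decompose the excursion $[0,\tau^1]$ vertex by vertex and isolate the dominant contribution associated with the vertex statistic appearing in \eqref{TailType1}, namely vertices $x$ reached exactly once from their parent whose strict ancestors in $\llbracket e,x\rrbracket$ are revisited at least twice. Using the many-to-one lemma \eqref{ManyTo1} together with standard spine and size-biasing arguments for the branching random walk $\mathcal{E}$, this reduces to one-dimensional computations on $(S_j)_{j\geq 0}$. Combining the annealed estimate of de Rapph\'elis from \cite{deRaph1} with Q.~Liu's tail $\Pb^*(W_\infty>m)\sim c_{\infty,\kappa}m^{-\kappa}$ from \cite{Liu1}, one expects to obtain, for $\Pb^*$-a.e.\ $\mathcal{E}$, a quenched tail of the form
\begin{align*}
\P^{\mathcal{E}}(\xi_1 > m)\,\underset{m\to\infty}{\sim}\, K(\mathcal{E})\, m^{-1/(\kappa\wedge 2)}
\end{align*}
when $\kappa\neq 2$, with a logarithmic correction when $\kappa=2$; here $K(\mathcal{E})$ is an explicit functional of the environment built out of $W_\infty$ together with either $c_0$ (for $\kappa>2$) or $C_\infty c_\kappa$ (for $\kappa\in(1,2]$).

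Given the quenched tail of $\xi_1$, a functional stable limit theorem (resp.\ Donsker's invariance principle when $\kappa>2$) applied under $\P^{\mathcal{E}}$ yields the convergence of $(\tau^{\lfloor s\aan\rfloor}/n)_{s\geq 0}$ to a $1/(\kappa\wedge 2)$-stable subordinator $\sigma$ with Laplace exponent determined by $K(\mathcal{E})$, with $\aan=\sqrtBis{n}$ if $\kappa>2$, $\aan=\sqrtBis{(n\log n)}$ if $\kappa=2$, and $\aan=n^{1/\kappa}$ if $\kappa\in(1,2)$. Inverting this convergence and using the classical identification of the inverse of a $1/\alpha$-stable subordinator with the running maximum $S(\cdot,Y^{(\alpha)})$ of a spectrally-negative $\alpha$-stable L\'evy process $Y^{(\alpha)}$ (which specializes, at $\alpha=2$, to L\'evy's identity $S(\cdot,B)\egloi|B|$) then produces the three advertised limits; the multiplicative constants $\sqrtBis{c_0}/W_\infty$, $\sqrtBis{(C_\infty c_2/2)}/W_\infty$ and $(C_\infty c_\kappa|\Gamma(1-\kappa)|/2)^{1/\kappa}/W_\infty$ drop out by matching the Laplace exponent of $\sigma$, read off from $K(\mathcal{E})$, with the scaling of $S(\cdot,Y^{(\alpha)})$. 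The main obstacle is the passage from annealed to quenched tail asymptotics of $\xi_1$: it requires precise second-moment control on sums of excursion functionals over large-generation vertices of $\T$, exploiting all three of \ref{Assumption1}--\ref{Assumption3}; tightness in $D([0,\infty))$ then follows cheaply from the monotonicity of $t\mapsto \mathcal{L}^{\lfloor tn\rfloor}$ and the continuity of the limit process, once finite-dimensional convergence is established.
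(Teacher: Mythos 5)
Your overall skeleton is sound: under $\P^{\mathcal{E}}$ the excursion increments $\tau^j-\tau^{j-1}$ are indeed i.i.d., $\mathcal{L}^n$ is the right-continuous inverse of $(T^k)_k$ up to a negligible boundary term, and the inverse-map/continuous-mapping step together with the identification of the inverse of a $1/\gamma$-stable subordinator with $S(\cdot,\VectCoord{Y}{\gamma})$ (Lévy's identity when $\gamma=2$) is standard. The problem is that the entire content of the theorem has been compressed into the one step you do not prove: the quenched tail asymptotic $\P^{\mathcal{E}}(\tau^1>m)\sim K(\mathcal{E})\,m^{-1/(\kappa\wedge 2)}$ for $\Pb^*$-almost every environment, with the exact constant $K(\mathcal{E})\propto W_{\infty}$ (times the deterministic factor built from $c_0$ or $C_{\infty}c_{\kappa}|\Gamma(1-\kappa)|$). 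By the converse of the stable limit theorem this statement is essentially \emph{equivalent} to the quenched convergence you are trying to prove, so asserting it on the strength of ``many-to-one, spine and size-biasing arguments'' is circular in effect. Those tools produce annealed estimates; they average over the environment, whereas a single excursion in a fixed tree cannot self-average, and there is no mechanism in your sketch that turns an annealed tail into an almost-sure quenched tail with an identified environment-dependent constant. The paper never proves (and does not need) such a single-excursion quenched tail: it decomposes $\tau^p$ over the regeneration set $\mathcal{B}^p_{\ell_n}$ of vertices visited exactly once beyond generation $(\log n)^2$, shows by a quenched law of large numbers (Lemma \ref{RootNumber}) that the number of regeneration subtrees used in $\lfloor\alpha n\rfloor$ excursions is $\approx \alpha n W_{\infty}$ — this is the only place $W_{\infty}$ enters — and feeds the annealed i.i.d. structure of those subtrees into the multi-type Galton–Watson forest limit theorem (Proposition \ref{PropArbresMulti}), transferring to the quenched law afterwards by a second-moment argument over disjoint subtrees.

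A second, related confusion is your appeal to Q. Liu's tail $\Pb^*(W_{\infty}>m)\sim c_{\infty,\kappa}m^{-\kappa}$. The tail of $W_{\infty}$ under $\Pb^*$ is an annealed statement about the environment and plays no role here: once the environment is fixed, $W_{\infty}$ is just a finite constant, and the heavy tail of the excursion time comes from the walk's fluctuations encoded in the statistic of \eqref{TailType1}, i.e.\ the $\kappa$-tail of $\sum_{x}\un_{\{N_x^{\tau^1}=1,\;\min_{e<z<x}N_z^{\tau^1}\geq 2\}}$ under the annealed law (de Raphélis), whose constant $c_{\kappa}$ the paper explicitly warns is not $c_{\infty,\kappa}$. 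So the proposal, as written, both misidentifies the source of the heavy tail and leaves the decisive quenched estimate unproved; to repair it you would essentially have to reintroduce the regeneration-vertex decomposition and the quenched concentration over many excursions that constitute the paper's proof.
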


\vspace{0.3cm}

\noindent Note that we could have stated Theorem \ref{Th2} for $\mathcal{L}^{\lfloor tn\rfloor}_{e}$ (the local time of the root $e$) instant of $\mathcal{L}^{\lfloor tn\rfloor}$, the proof would have been similar but the random variable $W_{\infty}$ would have been replaced by $p^{\mathcal{E}}(e,e^*)W_{\infty}$, see the remarks below.

\vspace{0.2cm}

\begin{remark}[Remark on $S(t,B)$ and $S(t,\VectCoord{Y}{\gamma})$]\phantom{.}\\
    For convenience, introduce, for any $\gamma\in(1,2]$, $\VectCoord{\Tilde{Y}}{\gamma}=\VectCoord{Y}{\gamma}$ if $\gamma\in(1,2)$ and $\VectCoord{\Tilde{Y}}{\gamma}=B$ if $\gamma=2$. Note that, since, $\VectCoord{\Tilde{Y}}{\gamma}$ has no positive jump, we have that $(S(t,\VectCoord{\Tilde{Y}}{\gamma});\; t\geq 0)$ has continuous paths. Moreover, it satisfies the same scaling property as $\VectCoord{\Tilde{Y}}{\gamma}$. Besides, when $\gamma=2$, $S(1,\VectCoord{\Tilde{Y}}{\gamma})$ is distributed as $|\mathcal{N}|$, where $\mathcal{N}$ is a standard Gaussian random variable. However, when $\gamma\in(1,2)$, $S(1,\VectCoord{\Tilde{Y}}{\gamma})$ is known to follow a Mittag-Leffler distribution of order $1/\gamma$ (see \cite{Bingham1973}): for any $\lambda\geq 0$
    \begin{align}\label{LaplaceMittag-Leffler}
        \mathtt{E}\big[e^{-\lambda S(1,\VectCoord{Y}{\gamma})}\big]=\sum_{k\geq 0}\frac{(-\lambda)^k}{\Gamma(1+\frac{k}{\gamma})}.
    \end{align}
\end{remark}
\noindent Now, introduce $T^j$, the $j$-th hitting time to the parent $e^*$ of the root $e$: $T^0=0$ and for any $j\in\N^*$
\begin{align*}
    T^j:=\{k>T^{j-1};\; X_j=e^*\},
\end{align*}
and for a random process $Y$ and for any $\alpha>0$, $\tau_{\alpha}(Y)$ denotes the first time $Y$ reaches the level $\alpha$ :
\begin{align*}
    \tau_{\alpha}(Y):=\inf\{t>0;\; Y_t=\alpha\}.
\end{align*}
\noindent Similarly to Theorem \ref{Th2}, we have the following result:
\begin{theo}\label{Th1}
Assume that the assumptions \ref{Assumption1}, \ref{Assumption2}, \ref{Assumption3} hold. Under $\P^*$ and $\Pb^*$-almost surely, under $\P^{\mathcal{E}}$, the following converges hold in law for the Skorokhod topology on $D([0,\infty))$
\begin{enumerate}[label=(\roman*)]
    \item if $\kappa>2$
    \begin{align*}
        \Big(\frac{1}{n^2}T^{\lfloor\alpha n\rfloor};\; \alpha\geq 0\Big)\underset{n\to\infty}{\longrightarrow}\Big(\frac{(W_{\infty})^2}{c_0}\tau_{\alpha}(B);\; \alpha\geq 0\Big).
    \end{align*}
    \item if $\kappa=2$
    \begin{align*}
        \Big(\frac{\log n}{n^2}T^{\lfloor\alpha n\rfloor};\; \alpha\geq 0\Big)\underset{n\to\infty}{\longrightarrow}\Big(\frac{(W_{\infty})^2}{C_{\infty}c_2}\tau_{\alpha}(B);\; \alpha\geq 0\Big);
    \end{align*}
    \item if $\kappa\in(1,2)$
    \begin{align*}
        \Big(\frac{1}{n^{\kappa}}T^{\lfloor\alpha n\rfloor};\; \alpha\geq 0\Big)\underset{n\to\infty}{\longrightarrow}\Big(\frac{2(W_{\infty})^{\kappa}}{C_{\infty}c_{\kappa}|\Gamma(1-\kappa)|}\tau_{\alpha}(\VectCoord{Y}{\kappa});\; \alpha\geq 0\Big).
    \end{align*}
\end{enumerate} 
\end{theo}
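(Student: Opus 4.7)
My plan is to deduce Theorem~\ref{Th1} from Theorem~\ref{Th2} through the inversion relation between $(T^j)_{j\in\N}$ and $(\mathcal{L}^m)_{m\in\N}$. Since $T^j$ is the $j$-th visit of $\X$ to $e^*$, I have the exact identity $\mathcal{L}^{T^j}=j$ together with the sandwich $T^{\mathcal{L}^m}\le m<T^{\mathcal{L}^m+1}$; viewed as non-decreasing càdlàg processes on $[0,\infty)$, the maps $m\mapsto\mathcal{L}^m$ and $j\mapsto T^j$ are thus right-inverses of each other, and Theorem~\ref{Th1} should follow from Theorem~\ref{Th2} by a right-inverse continuous-mapping argument.

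The first step is to rewrite Theorem~\ref{Th2} at the time scale adapted to Theorem~\ref{Th1}: I set $N=n^2$ in case (i), $N=n^2/\log n$ in case (ii) (noting that $\log N\sim 2\log n$, hence $\sqrtBis{(N\log N)}\sim\sqrtBis{2}\,n$), and $N=n^\kappa$ in case (iii). A direct substitution in Theorem~\ref{Th2} then yields, $\Pb^*$-almost surely under $\P^{\mathcal{E}}$ and under $\P^*$, the convergence
\begin{align*}
\Big(\frac{1}{n}\mathcal{L}^{\lfloor tN\rfloor};\ t\ge 0\Big)\underset{n\to\infty}{\longrightarrow}\bigl(\rho_\kappa\,S(t,\widetilde Y^{(\kappa\wedge 2)});\ t\ge 0\bigr)
\end{align*}
in the Skorokhod topology on $D([0,\infty))$, where $\widetilde Y^{(\gamma)}=B$ if $\gamma=2$, $\widetilde Y^{(\gamma)}=\VectCoord{Y}{\gamma}$ if $\gamma\in(1,2)$, and $\rho_\kappa>0$ is the explicit $W_\infty$-dependent constant carrying the scaling factors from Theorem~\ref{Th2}.

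I would then apply to both sides the right-inverse functional $\Phi:f\mapsto\bigl(\alpha\mapsto\inf\{t\ge 0:\,f(t)>\alpha\}\bigr)$ and invoke its almost sure $J_1$-continuity at non-decreasing continuous unbounded paths without flat stretches at prescribed levels. This gives that $(T^{\lfloor\alpha n\rfloor}/a_n;\,\alpha\ge 0)$, for $a_n\in\{n^2,\,n^2/\log n,\,n^\kappa\}$, converges in law to $\Phi\bigl(\rho_\kappa S(\cdot,\widetilde Y^{(\kappa\wedge 2)})\bigr)$. Since $\widetilde Y^{(\kappa\wedge 2)}$ has no positive jump and satisfies $\sup_{t\ge 0}\widetilde Y^{(\kappa\wedge 2)}_t=+\infty$ almost surely, the running maximum and the process itself reach every new positive level at the same instant, so that this limit coincides with $\alpha\mapsto\tau_{\alpha/\rho_\kappa}(\widetilde Y^{(\kappa\wedge 2)})$. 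To reach the explicit form displayed in Theorem~\ref{Th1}, I finally use the self-similarity of $\widetilde Y^{(\gamma)}$, namely $(\tau_{c\alpha}(\widetilde Y^{(\gamma)}))_{\alpha\ge 0}\stackrel{\mathcal{L}}{=}(c^\gamma\tau_\alpha(\widetilde Y^{(\gamma)}))_{\alpha\ge 0}$, which, since $\rho_\kappa$ is $\P^{\mathcal{E}}$-deterministic, produces the announced constants $W_\infty^2/c_0$, $W_\infty^2/(C_\infty c_2)$ and $2W_\infty^\kappa/(C_\infty c_\kappa|\Gamma(1-\kappa)|)$ after an elementary computation.

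The principal obstacle will be the continuity-of-the-inverse step. I would need to verify that almost surely the limiting path $t\mapsto S(t,\widetilde Y^{(\kappa\wedge 2)})$ has no plateau at any deterministic level $\alpha\ge 0$, a property that follows from the strong Markov property of $\widetilde Y^{(\kappa\wedge 2)}$ at its first-passage times together with the fact that a Lévy process with no positive jump leaves each of its running maxima immediately, and that the inverse map $\Phi$ is indeed continuous at such paths in $(D([0,\infty)),J_1)$. A companion technical point is the extension of the argument from compact time intervals to the whole half-line, which requires a uniform-in-$t$ control ensuring $\mathcal{L}^{\lfloor tN\rfloor}/n\to\infty$ as $t\to\infty$; this should follow from $\sup_{t\ge 0}\widetilde Y^{(\kappa\wedge 2)}_t=+\infty$ combined with the tightness estimates already underlying the proof of Theorem~\ref{Th2}.
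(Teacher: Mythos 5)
Your reduction of Theorem \ref{Th1} to Theorem \ref{Th2} breaks down at the continuity-of-the-inverse step, and the failure is not a technicality that the announced verification can repair. The limit path $t\mapsto \rho_\kappa S(t,\VectCoord{\Tilde{Y}}{\kappa\land 2})$ is continuous but almost surely \emph{not} strictly increasing: it has flat stretches on every excursion of $\VectCoord{\Tilde{Y}}{\kappa\land 2}$ below its running maximum, and these plateaus are precisely what become the jumps of the subordinator $\alpha\mapsto\tau_\alpha(\VectCoord{\Tilde{Y}}{\kappa\land 2})$. The first-passage functional $\Phi$ is not $J_1$-continuous at such paths: if $x_n$ is non-decreasing and converges uniformly on compacts to a continuous $x$ having a plateau of length $b-a$ at level $\alpha$, the inverses $x_n^{-1}$ may climb from $a$ to $b$ gradually (in many small steps) across a shrinking neighbourhood of $\alpha$, so that $x_n^{-1}$ converges to $x^{-1}$ only in $M_1$, never in $J_1$ (a continuous function cannot be $J_1$-close to one with a macroscopic jump). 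The property you propose to check --- absence of a flat stretch at each \emph{fixed} level $\alpha$, which indeed holds by regularity of the spectrally negative process at its maximum --- only yields a.s. continuity of the one-dimensional marginals $\Phi(\cdot)(\alpha)$, hence finite-dimensional convergence and, by monotonicity, $M_1$ convergence of $(T^{\lfloor\alpha n\rfloor}/\kappa_n)$; it does not give the Skorokhod $J_1$ convergence claimed in Theorem \ref{Th1}. To get $J_1$ you must know that each macroscopic plateau of $\mathcal{L}$ is crossed by $T$ in essentially one jump, i.e. that a single long excursion away from $e^*$ (a single heavy tree of the range) accounts for the bulk of each long stretch between visits; this information is simply not contained in the statement of Theorem \ref{Th2}, so no continuous-mapping argument from it can deliver the result. (Your inversion identity $\mathcal{L}^{T^j}=j$, the substitutions $N=n^2$, $n^2/\log n$, $n^\kappa$, and the scaling computation producing the constants $W_\infty^2/c_0$, $W_\infty^2/(C_\infty c_2)$, $2W_\infty^\kappa/(C_\infty c_\kappa|\Gamma(1-\kappa)|)$ are all correct; the quenched version would also go through once the functional step is fixed.)

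This is also why the paper does not argue by inversion: it proves the two theorems in parallel through the multi-type Galton--Watson forest attached to $(\mathcal{R}_{\tau^p},(N_x^{\tau^p}))$, writing $\tau^{\lfloor\alpha n\rfloor}$ (up to a negligible correction) as $F_{B^{\lfloor\alpha n\rfloor}_{\ell_n}}$ and invoking Proposition \ref{PropArbresMulti}, where the hitting-time component is the first-passage process of the Lukasiewicz path $-V^1$. There the prelimit process has (conditionally) i.i.d.\ increments in $\alpha$, and the heavy tail of the single-tree weight is what produces the jumps of $\tau_\alpha$ in the prelimit itself; this independent-increment structure is exactly the extra input that legitimises the $J_1$ convergence. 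If you want to salvage your route, you would either have to weaken the topology in Theorem \ref{Th1} to $M_1$ (or to finite-dimensional convergence), or supplement Theorem \ref{Th2} with a quantitative statement that the maximal contribution of a single excursion dominates $T^{\lfloor\alpha n\rfloor}-T^{\lfloor\alpha' n\rfloor}$ over short level intervals --- at which point you are effectively reproducing the paper's direct analysis of the excursion/forest decomposition.
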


\begin{remark}[Remark on $\tau_{\alpha}(B)$ and $\tau_{\alpha}(\VectCoord{Y}{\gamma})$]\label{RemTA}\phantom{.}\\
Recall that for any $\gamma\in(1,2]$, $\VectCoord{\Tilde{Y}}{\gamma}=\VectCoord{Y}{\gamma}$ if $\gamma\in(1,2)$ and $\VectCoord{\Tilde{Y}}{\gamma}=B$ if $\gamma=2$. First, note that, since $\sup_{t\geq 0}\VectCoord{\Tilde{Y}}{\gamma}_t=+\infty$, almost surely, we have $\tau_{\alpha}(\VectCoord{\Tilde{Y}}{\gamma})<\infty$ almost surely for all $\alpha>0$. Then, the fact that $\VectCoord{\Tilde{Y}}{\gamma}$ has no positive jump implies that for any $0<\alpha\leq\beta$, $\tau_{\alpha}(\VectCoord{\Tilde{Y}}{\gamma})\leq\tau_{\beta}(\VectCoord{\Tilde{Y}}{\gamma})$. Hence, thanks to the strong Markov property and the scaling property of $\VectCoord{\Tilde{Y}}{\gamma}$, $(\tau_{\alpha}(\VectCoord{\Tilde{Y}}{\gamma});\; \alpha\geq 0)$ is a subordinator thus admitting a càdlàg modification. We use systematically this modification in the present paper. Finally, recall (see \cite{Zolotarev1} for example) that, for all $\lambda\geq 0$ and any $\alpha>0$
\begin{align}\label{LaplaceTA}
    \mathtt{E}\big[e^{-\lambda\tau_{\alpha}(\VectCoord{\Tilde{Y}}{\gamma})}\big]=\left \{
   \begin{array}{l c l}
      e^{-\alpha\lambda^{1/\gamma}}  & \text{if} & \gamma\in(1,2) \\
      e^{-\alpha\sqrtBis{(2\lambda)}}   & \text{if} & \gamma=2. 
   \end{array}
   \right .
\end{align}
Note that thanks to \eqref{LaplaceTA}, $(\tau_{\alpha}(\VectCoord{\Tilde{Y}}{\gamma});\; \alpha>0)$ also satisfies a scaling property: the processes $(\tau_{\alpha}(\VectCoord{\Tilde{Y}}{\gamma});\; \alpha\geq 0)$ and $(\mathfrak{c}^{\gamma\land 2}\tau_{\alpha/\mathfrak{c}}(\VectCoord{\Tilde{Y}}{\gamma});\; \alpha\geq0)$ have the same law.
\end{remark}

\begin{remark}
    Let $\kappa_n=n^{\kappa\land 2}$ if $\kappa\not=2$ and $\kappa_n=n^2/\log n$ if $\kappa=2$. Note that we actually obtain the convergence of the sequence of processes $((\frac{1}{\kappa_n}T^{\lfloor\alpha n\rfloor};\; \alpha\geq 0),(\frac{1}{n}\mathcal{L}^{\lfloor t\kappa_n\rfloor}\; t\geq 0))_{n\in\N^*}$ in law for the Skorokhod product topology on $D([0,\infty))\times D([0,\infty))$.
\end{remark}

\noindent For any $m\in\N^*$, introduce $\mathcal{R}_m$, the range of the random walk $\X$ at time $m$, that is the sub-tree of $\T$ of all the vertices visited by $\X$ up to $m$:
\begin{align*}
    \mathcal{R}_m:=\{x\in\T;\; \exists\; j\leq m:\; X_j=x\},
\end{align*}
and denote by $R_m$ its cardinal. The result we would like to present links $T^j$ and $R_{T^j}$ uniformly:

\begin{theo}\label{Th3}
Assume that the assumptions \ref{Assumption1}, \ref{Assumption2}, \ref{Assumption3} hold. For all $\varepsilon>0$ and any $\kappa>1$
\begin{align*}
    \P^*\Big(\sup_{1\leq p\leq n}\big|R_{T^p}-\frac{\bm{c}_{\infty}}{2}T^p\big|>\varepsilon\kappa_n\Big)\underset{n\to\infty}{\longrightarrow}0,
\end{align*}
where $\bm{c}_{\infty}:=\Eb[(\sum_{j\geq 0}e^{-S_j})^{-1}]$, see \eqref{ManyTo1}.
\end{theo}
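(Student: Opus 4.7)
The starting point is the identity $T^p = 2\sum_{x\in\T} N_x^{T^p}$, valid because after $p$ complete excursions the walk is back at $e^*$, and each edge $(x^*,x)$ is therefore traversed as often upwards as downwards. Combined with $R_{T^p}=\sum_{x\in\T}\un_{\{N_x^{T^p}\geq 1\}}$, this rewrites the quantity to be controlled as a sum over visited vertices,
\begin{align*}
R_{T^p}-\frac{\bm{c}_\infty}{2}T^p=\sum_{x\in\mathcal{R}_{T^p}}\bigl(1-\bm{c}_\infty N_x^{T^p}\bigr).
\end{align*}
The task thus reduces to showing that $N_x^{T^p}$ is, on average across visited $x$, close to $1/\bm{c}_\infty$, uniformly in $p\leq n$.

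The next step would be a per-vertex analysis. Fix the environment $\mathcal{E}$. The reversibility of $\X$ on $\T\cup\{e^*\}$, combined with a gambler's-ruin / electric-network argument along the ancestral path $\llbracket e,x\rrbracket$, expresses the probability that $\X$, started at $x$, ever escapes through $x^*$ back to $e^*$ without first returning to $x$ explicitly in terms of the potentials $(V(x_i))_{0\leq i\leq|x|}$. This implies that, conditionally on $x\in\mathcal{R}_{T^p}$, the distribution of $N_x^{T^p}$ is approximately geometric with a parameter of that form. Applying the \mto~\eqref{ManyTo1} to a vertex sampled proportionally to $e^{-V(x)}$ identifies exactly $\bm{c}_\infty=\E\bigl[(\sum_{j\geq 0}e^{-S_j})^{-1}\bigr]$ as the annealed value of this escape parameter, so that $1/\bm{c}_\infty$ is the typical conditional mean of $N_x^{T^p}$ given $x\in\mathcal{R}_{T^p}$ and the summand $1-\bm{c}_\infty N_x^{T^p}$ is centred.

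The third step consists in introducing the $(\F_p)$-martingale
\begin{align*}
M_p:=\sum_{j=1}^p\Bigl[\bigl(\Delta R_j-\tfrac{\bm{c}_\infty}{2}\Delta T_j\bigr)-\Ee\bigl[\Delta R_j-\tfrac{\bm{c}_\infty}{2}\Delta T_j\,\big|\,\F_{j-1}\bigr]\Bigr],
\end{align*}
with $\Delta R_j:=R_{T^j}-R_{T^{j-1}}$, $\Delta T_j:=T^j-T^{j-1}$ and $\F_p:=\sigma(\mathcal{E},X_0,\dots,X_{T^p})$. The compensator is controlled by the per-vertex step above: the centred discrepancies telescope into a sum over $\mathcal{R}_{T^p}$ that is $o(\kappa_n)$ uniformly in $p\leq n$, by a law of large numbers along the visited subtree using \eqref{TailType1} and Assumption~\ref{Assumption3}. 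For $M_p$ itself, a Doob $L^2$-maximal inequality together with the conditional variance bound supplied by the moment hypotheses of Assumption~\ref{Assumption3} finishes the argument.

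The hard part will be handling the heavy tails of $\Delta T_j$: in the (sub-)diffusive regime a single excursion can already contribute a jump of order $\kappa_n$ to $T^p$ (cf. Theorem~\ref{Th1}), so a naive discretisation in $p$ is out of reach. One must instead prove that $\Delta R_j-(\bm{c}_\infty/2)\Delta T_j=o(\kappa_n)$ \emph{inside} each such rare large excursion; this is where the identification of $\bm{c}_\infty$ through the \mto\ is used in its sharpest form, combined with the inside-an-excursion range estimates of \cite{deRaph1}.
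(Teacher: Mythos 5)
There is a genuine gap, and it sits exactly where your last paragraph concedes it does. Your scheme (write $R_{T^p}-\frac{\bm{c}_{\infty}}{2}T^p$ as $\sum_{x\in\mathcal{R}_{T^p}}(1-\bm{c}_{\infty}N_x^{T^p})$ up to $O(p)$, then centre per vertex/per excursion and run a Doob $L^2$ maximal inequality) cannot be carried out as stated. First, the summands are not centred in any per-vertex or per-excursion sense: quenched, $\E^{\mathcal{E}}[N_x^{\tau^1}\,|\,N_x^{\tau^1}\geq 1]=H_x$, and $\bm{c}_{\infty}=\Eb[(\sum_{j\geq 0}e^{-S_j})^{-1}]$ is an expectation of a reciprocal, not the reciprocal of an expectation; more fundamentally, in the null recurrent regime ($\psi(1)=0$) both $\E[R_{\tau^1}]$ and $\E[\tau^1]$ are infinite, so the compensator $\Ee[\Delta R_j-\frac{\bm{c}_{\infty}}{2}\Delta T_j\,|\,\F_{j-1}]$ you propose to subtract is not even finite, and for $\kappa\in(1,2]$ Assumption \ref{Assumption3} supplies no second moments, so the $L^2$ maximal inequality is unavailable. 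In addition, $\Delta R_j$ is not a fresh excursion functional under $\P^{\mathcal{E}}$ (the range is cumulative, so it depends on the whole past range), so the increments are neither i.i.d.\ nor stationary. The constant $\bm{c}_{\infty}/2$ does not arise as a "typical conditional mean of $N_x^{T^p}$"; it arises only as the ratio $\Tilde{\nu}/(2\nu)$ of two finite means attached to a regeneration structure, namely the once-visited vertices whose ancestors (beyond generation $\ell_n$) are visited at least twice.

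That regeneration structure is the missing idea, and it is the actual content of the theorem rather than a technical remainder. The paper's proof reduces to $\tau^p$, disposes of $p\leq\delta n$ via $R_{\tau^p}\leq\tau^p$ and the scaling of Theorem \ref{Th1}, cuts the tree at generation $\ell_n=(\log n)^2$ and shows the low-generation contribution is $o(\kappa_n)$ uniformly (as in \eqref{ProbaTA1}), and then uses that the subtrees hanging at the vertices of $\mathcal{B}^p_{\ell_n}$ are i.i.d.\ copies of $(\mathcal{R}_{\tau^1},(N_x^{\tau^1}))$, i.e.\ the multi-type Galton--Watson forest of Fact \ref{GWF_Hyp}. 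The uniform cancellation "inside a big excursion" that you defer to "sharpest many-to-one plus range estimates of \cite{deRaph1}" is exactly the uniform law of large numbers in the Moreover parts of Proposition \ref{PropArbresMulti} (proved along the Lukasiewicz path of type-$1$ vertices with Dini's theorem, using only $\nu,\Tilde{\nu}<\infty$, hence no second moments), combined with Lemma \ref{RootNumber} to control the number of regeneration roots $B^p_{\ell_n}$ uniformly in $p$. Without introducing the sets $\mathcal{B}^p_{\ell_n}$ (or an equivalent regeneration decomposition) and proving a uniform LLN indexed by them, the reduction you give merely restates the theorem.
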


\noindent Theorem \ref{Th3} somehow says that the size of the tree $\mathcal{R}_{T^p}$ is proportional to the time spent by the random walk $\X$ away from $e^*$, uniformly in $p$. Note that it is known (see the proofs of Theorem 1.1 and Theorem 6.1 in \cite{AidRap} for $\kappa>2$ and Lemma 4 in \cite{deRaph1} for $\kappa\in(1,2]$) that $R_n/n\to \bm{c}_{\infty}/2$ as $n\to\infty$ in $\P^*$-probability.

\vspace{0.2cm}

\noindent In our last result, we give an equivalent of the probability $\P^{\mathcal{E}}(X_{2n+1}=e^*)$ as $n\to\infty$:
\begin{coro}\label{Coro1}
Assume that the assumptions \ref{Assumption1}, \ref{Assumption2}, \ref{Assumption3} hold. We have, $\Pb^*$-almost surely
\begin{enumerate}[label=(\roman*)]
    \item if $\kappa>2$
        \begin{align*}
            \lim_{n\to\infty}\sqrtBis{n}\P^{\mathcal{E}}(X_{2n+1}=e^*)=\frac{1}{W_{\infty}}\Big(\frac{2c_0}{\pi}\Big)^{1/2}; 
        \end{align*}
    \item if $\kappa=2$
        \begin{align*}
            \lim_{n\to\infty}\Big(\frac{n}{\log n}\Big)^{1/2}\P^{\mathcal{E}}(X_{2n+1}=e^*)=\frac{1}{W_{\infty}}\Big(\frac{C_{\infty}c_2}{\pi}\Big)^{1/2};
        \end{align*}
    \item if $\kappa\in(1,2)$
        \begin{align*}
            \lim_{n\to\infty}n^{1-1/\kappa}\P^{\mathcal{E}}(X_{2n+1}=e^*)=\frac{1}{W_{\infty}}\Gamma(1/\kappa)^{-1}\big(C_{\infty}c_{\kappa}|\Gamma(1-\kappa)|\big)^{1/\kappa}.
        \end{align*}
\end{enumerate}
\end{coro}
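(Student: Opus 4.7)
My plan is to deduce the corollary from Theorem~\ref{Th2} via the identity
\begin{align*}
\E^{\mathcal{E}}\bigl[\mathcal{L}^{2n+1}\bigr] \;=\; \sum_{k=1}^{2n+1}\P^{\mathcal{E}}(X_k=e^*) \;=\; \sum_{j=0}^n a_j,\qquad a_j\;:=\;\P^{\mathcal{E}}(X_{2j+1}=e^*),
\end{align*}
the second equality using that $X_k=e^*$ forces $k$ to be odd (parity argument, with the convention $|e^*|=-1$). Combined with a monotone-density argument, the statement reduces to two sub-steps: (a) the asymptotic of $\E^{\mathcal{E}}[\mathcal{L}^n]$, and (b) the monotonicity of $(a_j)_{j\geq 0}$.

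For (a), I would upgrade the convergence in law of Theorem~\ref{Th2} at $t=1$ into convergence of $\P^{\mathcal{E}}$-expectations via uniform integrability. The required $L^p$-bound on $\mathcal{L}^n$ divided by its natural scale may be obtained by a second-moment argument when $\kappa>2$ (in the spirit of \cite{Hu2017Corrected}), and for $\kappa\in(1,2]$ by an $L^p$-argument with $1<p<\kappa$ based on the quenched tail bounds on $T^1$ derived in \cite{deRaph1} and implicit in the proof of Theorem~\ref{Th1}. The explicit limit is then computed using $\E[S(1,B)]=\sqrt{2/\pi}$ (reflection principle) in cases (i)--(ii), and $\E[S(1,\VectCoord{Y}{\kappa})]=\kappa/\Gamma(1/\kappa)$ in case~(iii), the latter being obtained by differentiating the Mittag-Leffler Laplace transform~\eqref{LaplaceMittag-Leffler} at $\lambda=0$.

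For (b), reversibility of $\X$ with respect to the natural weighted-tree measure $\pi_{\mathcal{E}}$ attached to the edge weights $w(x,x^*)=e^{-V(x)}$, combined with the deterministic one-step reflection at $e^*$, yields $a_n = (\pi_{\mathcal{E}}(e^*)/\pi_{\mathcal{E}}(e))\,\P^{\mathcal{E}}(X_{2n}=e)$, and the sequence $n\mapsto\P^{\mathcal{E}}(X_{2n}=e)$ is non-increasing because $P^2$ is a positive self-adjoint operator on $L^2(\pi_{\mathcal{E}})$ whose spectrum lies in $[0,1]$. With this monotonicity, Karamata's monotone-density theorem converts the integrated asymptotic $\sum_{j=0}^n a_j\sim Cn^{\rho}\ell(n)$, with $\rho=1/(\kappa\wedge 2)$ and $\ell(n)=\sqrt{\log n}$ at the critical $\kappa=2$ (and $\ell\equiv 1$ otherwise), into the pointwise asymptotic $a_n\sim C\rho\, n^{\rho-1}\ell(n)$. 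Plugging in the explicit constant from~(a) and accounting for the parity factor $(2n+1)^{1/(\kappa\wedge 2)}\sim 2^{1/(\kappa\wedge 2)}n^{1/(\kappa\wedge 2)}$ in the $\mathcal{L}^{2n+1}$-normalisation yields the three formulas of the corollary.

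The main obstacle is the quenched uniform integrability in~(a); once this is established, the rest of the argument is a standard Tauberian/elementary computation. An alternative route bypassing the UI issue would be to exploit directly the iid structure of the return-time increments $(T^j-T^{j-1})_{j\geq 2}$ (with common quenched distribution $1+T^1$) and apply a local limit theorem for sums in the domain of attraction of a $1/(\kappa\wedge 2)$-stable law, using the Laplace transform behaviour of $T^1$ near zero read off Theorem~\ref{Th1}.
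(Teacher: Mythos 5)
Your plan follows essentially the same route as the paper's proof: write $\E^{\mathcal{E}}[\mathcal{L}^{2n+1}]=\sum_{k=0}^{n}\P^{\mathcal{E}}(X_{2k+1}=e^*)$ by parity, upgrade Theorem~\ref{Th2} to convergence of quenched means via uniform integrability and the Mittag-Leffler/Gaussian moment $\E[S(1,\cdot)]$, then use monotonicity of the even-time return probabilities (the paper uses $\P^{\mathcal{E}}(X_{2k+1}=e^*)=\P^{\mathcal{E}}_{e^*}(X_{2k+2}=e^*)$, you use reversibility and self-adjointness of $P^2$, which is the same fact) together with a Tauberian/monotone-density theorem. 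The only minor difference is the uniform-integrability step, where the paper avoids your case-split moment estimates by a single argument valid for all $\kappa>1$: $\P^{\mathcal{E}}(\mathcal{L}^n\geq k)=\P^{\mathcal{E}}(T^k\leq n)\leq\P^{\mathcal{E}}(T^{\lfloor\gamma_n/\varepsilon_0\rfloor}\leq n)^{\lfloor k/\lfloor\gamma_n/\varepsilon_0\rfloor\rfloor}$ by the strong Markov property, with the base bounded away from $1$ thanks to Theorem~\ref{Th1}, which yields all quenched moments of $\mathcal{L}^n/\gamma_n$ at once.
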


\vspace{0.3cm}

\noindent Again, we could have stated Corollary \ref{Coro1} for $\P^{\mathcal{E}}(X_{2n}=e)$ instead of $\P^{\mathcal{E}}(X_{2n+1}=e^*)$ and the random variable $W_{\infty}$ would have been replaced by $p^{\mathcal{E}}(e,e^*)W_{\infty}$.

\vspace{0.2cm}

\noindent Corollary \ref{Coro1} extends Corollary 1.4 in \cite{Hu2017Corrected}. Using concave recursive equations, Y. Hu was able to find an equivalent of $\P^{\mathcal{E}}(X_{2n}=e)$ for any $\kappa>2$ but, unfortunately, these arguments  did not permit to obtain such precision when $\kappa\leq 2$. Indeed, it is proved that $\Pb^*$-almost surely, if $\kappa\in(1,2)$, then $0<\liminf_{n\to\infty}n^{1-1/\kappa}\P^{\mathcal{E}}(X_{2n}=e)\leq\limsup_{n\to\infty}n^{1-1/\kappa}\P^{\mathcal{E}}(X_{2n}=e)<\infty$ and if $\kappa=2$, then, $0<\liminf_{n\to\infty}\sqrtBis{(n/\log n)}\P^{\mathcal{E}}(X_{2n}=e)\leq\limsup_{n\to\infty}\sqrtBis{(n/\log n)}\P^{\mathcal{E}}(X_{2n}=e)<\infty$.

\subsection{Multi-type Galton Watson forests}

We first give a brief construction of a multi-type Galton-Watson tree $(\mathcal{T},(\beta(x);x\in\mathcal{T}))$, rooted at $e$ with initial type $\beta(e)$ such that $\beta(x)\in\N^*$ for all $x\in\mathcal{T}$. Under a probability measure $\Pbis$, let $(\mathcal{P}_i)_{i\in\N^*}$ be a sequence of $\bigcup_{k\in\N}(\N^*)^k$-valued random variables.The generation $0$ contains one individual, the root $e$ with type equal to $\beta(e)$. For any $n\in\N^*$, assume the generation $n-1$ has been built. If it is empty, then the generation $n$ is also empty. Otherwise, for any vertex $x$ in the generation $n-1$, with type equal to $\beta(x)$, let $\mathcal{P}^x:=\{\beta(x^1),\ldots,\beta(x^{N(x)})\}$ be a random variable distributed as $\mathcal{P}_{\beta(x)}$, where $N(x):=\#\mathcal{P}^x$. The vertex $x$ gives progeny to $N(x)$ typed individuals $(x^1,\beta(x^1)),\ldots,(x^{N(x)},\beta(x^{N(x)}))$ independently of other vertices in generation $n-1$, thus forming the generation $n$. A multi-type Galton-Watson forest is then nothing but a family composed of\textrm{ i.i.d }multi-type Galton-Watson trees. \\
In this section, we consider a multi-type Galton-Watson forest $\mathcal{F}:=((\mathcal{T}_i,(\beta(x);x\in\mathcal{T}_i));i\in\N^*)$ satisfying the following hypothesis: if $\VectCoord{e}{i}$ denotes the root of the tree $\mathcal{T}_i$, then
\begin{enumerate}[start=1,label={(\bfseries H\arabic*)}]
    \item\label{H1} $\Pbis$-almost surely, $\forall\; i\in\N^*$, $\beta(\VectCoord{e}{i})=1$;
    \item\label{H2} $\Ebis[\sum_{x\in\mathcal{T}_1}\un_{\{G^1_{1}(x)=1,\; \beta(x)=1\}}]=1$ where $G^1_i(x)$ denotes the number of vertices of type 1 in genealogical line of $x$:
    \begin{align*}
        \forall\;  u\in\mathcal{T}_i\setminus\{\VectCoord{e}{i}\},\; G^1_i(u):=\sum_{\VectCoord{e}{i}\leq z<u}\un_{\{\beta(z)=1\}}\;\;\textrm{ and }\;\; G^1_i(\VectCoord{e}{i})=0;
    \end{align*}
    \item\label{H3} $\Ebis[\sum_{x\in\mathcal{T}_1}\beta(x)\un_{\{G^1_1(x)=1\}}]=:\nu<\infty$.
\end{enumerate}

\noindent Also denote by $\Tilde{\nu}$ the mean of $\sum_{x\in\mathcal{T}_1}\un_{\{G^1_1(x)=1\}}$. For all $p\in\N^*$, let $\mathcal{F}_p:=(\mathcal{T}_i)_{i\in\{1,\ldots,p\}}$ be the family composed of the first $p$ trees of the forest $\mathcal{F}$ and for any $x\in\mathcal{T}_i$, introduce
\begin{align*}
    \beta^{\star}(x):=\beta(x)+\sum_{y\in\mathcal{T}_i;\; y^*=x}\beta(y).
\end{align*}
Let $F_p=\sum_{x\in\mathcal{F}_p}\beta^{\star}(x)$ and $\Bar{F}(m):=\sup\{p\in\N^*;\; F_{p}\leq m\}$. We are interested in the asymptotic behavior of $(F_p,\Bar{F}(m))$ when $p$ and $m$ go to $\infty$. We deal with it in the following proposition:

\begin{prop}\label{PropArbresMulti}
Assume that $\mathcal{F}=((\mathcal{T}_i,(\beta(x);x\in\mathcal{T}_i));i\in\N^*)$ is a multi-type Galton-Watson forest satisfying hypothesis \ref{H1}, \ref{H2} and \ref{H3}. We have
 \begin{enumerate}[label=(\roman*)]
     \item if $\Ebis[(\sum_{x\in\mathcal{T}_1}\un_{\{\beta(x)=1,\; G^1_1(x)=1\}})^2]<\infty$, then, in law, under $\Pbis$, for the Skorokhod product topology on $D([0,\infty))\times D([0,\infty))$ 
     \begin{align*}
        &\Big(\Big(\frac{1}{n}F_{\lfloor\alpha \sqrtBis{n}\rfloor};\; \alpha\geq 0\Big),\Big(\frac{1}{\sqrtBis{n}}\Bar{F}({\lfloor tn\rfloor});\; t\geq 0\Big)\Big) \\[0.7em] & \underset{n\to\infty}{\longrightarrow}\Big(\Big(\frac{2\nu}{\sigma_1^2}\tau_{\alpha}(B);\; \alpha\geq 0\Big),\Big(\Big(\frac{\sigma_1^2}{2\nu}\Big)^{1/2}S(t,B);\; t\geq 0\Big)\Big),
    \end{align*}
    where $\sigma^2_1\in(0,\infty)$ denotes the variance of the random variable $\sum_{x\in\mathcal{T}_1}\un_{\{\beta(x)=1,\; G^1_1(x)=1\}}$. Moreover, for any $M,\varepsilon_1>0$
    \begin{align*}
        \lim_{n\to\infty}\Pbis\Big(\sup_{1\leq p\leq\lfloor M\sqrtBis{n}\rfloor}\Big|\sum_{x\in\mathcal{F}_{p}}1-\frac{\Tilde{\nu}}{2\nu}F_{p}\Big|>\varepsilon_1 n\Big)=0;
    \end{align*}
    \item if $\lim_{m\to\infty}m^2\Pbis(\sum_{x\in\mathcal{T}_1}\un_{\{\beta(x)=1,\; G^1_1(x)=1\}}>m):=\mathfrak{c}_2\in(0,\infty)$ exists, then in law, under $\Pbis$, for the Skorokhod product topology on $D([0,\infty))\times D([0,\infty))$  
    \begin{align*}
        &\Big(\Big(\frac{1}{n}F_{\lfloor\alpha \sqrtBis{(n\log n)}\rfloor};\; \alpha\geq 0\Big),\Big(\frac{1}{\sqrtBis{(n\log n)}}\Bar{F}({\lfloor tn\rfloor});\; t\geq 0\Big)\Big) \\[0.7em] & \underset{n\to\infty}{\longrightarrow}\Big(\Big(\frac{2\nu}{\mathfrak{c}_2}\tau_{\alpha}(B);\; \alpha\geq 0\Big),\Big(\Big(\frac{\mathfrak{c}_2}{2\nu}\Big)^{1/2}S(t,B);\; t\geq 0\Big)\Big).
    \end{align*}
    Moreover, for any $M,\varepsilon_1>0$
    \begin{align*}
        \lim_{n\to\infty}\Pbis\Big(\sup_{1\leq p\leq\sqrtBis{\lfloor M(n\log n)}\rfloor}\Big|\sum_{x\in\mathcal{F}_p}1-\frac{\Tilde{\nu}}{2\nu}F_p\Big|>\varepsilon_1 n\Big)=0;
    \end{align*}
    \item if there exists $\gamma\in(1,2)$ such that $\lim_{m\to\infty}m^{\gamma}\Pbis(\sum_{x\in\mathcal{T}_1}\un_{\{\beta(x)=1,\; G^1_1(x)=1\}}>m):=\mathfrak{c}_{\gamma}\in(0,\infty)$ exists, then in law, under $\Pbis$, for the Skorokhod product topology on $D([0,\infty))\times D([0,\infty))$
    \begin{align*}
        &\Big(\Big(\frac{1}{n}F_{\lfloor\alpha n^{1/\kappa}\rfloor};\; \alpha\geq 0\Big),\Big(\frac{1}{n^{1/\kappa}}\Bar{F}({\lfloor tn\rfloor});\; t\geq 0\Big)\Big) \\[0.7em] & \underset{n\to\infty}{\longrightarrow}\Big(\Big(\frac{2\nu}{\mathfrak{c}_{\gamma}|\Gamma(1-\gamma)|}\tau_{\alpha}(\VectCoord{Y}{\gamma});\; \alpha\geq 0\Big),\Big(\big(\mathfrak{c}_{\gamma}|\Gamma(1-\gamma)|/(2\nu)\big)^{1/\kappa}S(t,\VectCoord{Y}{\gamma});\; t\geq 0\Big)\Big).
    \end{align*}
    Moreover, for any $M,\varepsilon_1>0$
    \begin{align*}
        \lim_{n\to\infty}\Pbis\Big(\sup_{1\leq p\leq\lfloor Mn^{1/\kappa}\rfloor}\Big|\sum_{x\in\mathcal{F}_p}1-\frac{\Tilde{\nu}}{2\nu}F_p\Big|>\varepsilon_1 n\Big)=0.
    \end{align*}
 \end{enumerate}
\end{prop}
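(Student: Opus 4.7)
The argument reduces, in three steps, to a classical functional limit theorem for sums of i.i.d.\ total progenies of a critical Galton--Watson tree.

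\emph{Step 1 (reduction to i.i.d.\ sums via bubble decomposition).} Since the trees $\mathcal{T}_i$ are i.i.d., $F_p=\sum_{i=1}^p\xi_i$ with $\xi_i:=\sum_{x\in\mathcal{T}_i}\beta^\star(x)$ i.i.d. Expanding $\beta^\star(x)=\beta(x)+\sum_{y:y^*=x}\beta(y)$ and using $\beta(\VectCoord{e}{i})=1$ (from \ref{H1}) gives, by a one-step telescoping, $\xi_i=2|\mathcal{T}_i|_\beta-1$ with $|\mathcal{T}_i|_\beta:=\sum_{x\in\mathcal{T}_i}\beta(x)$. Denote by $\mathcal{T}_i^{(1)}$ the embedded tree on the type-$1$ vertices of $\mathcal{T}_i$, the parent of $u\in\mathcal{T}_i^{(1)}$ being its deepest strict type-$1$ ancestor. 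By \ref{H1}--\ref{H2} this is a critical Galton--Watson tree with offspring law that of $K:=\sum_{x\in\mathcal{T}_1}\un_{\{\beta(x)=1,\,G^1_1(x)=1\}}$, $\Ebis[K]=1$. Partition $\mathcal{T}_i$ into the bubbles $(B(u))_{u\in\mathcal{T}_i^{(1)}}$, where $B(u)$ collects $u$ and the non-type-$1$ descendants of $u$ admitting no intermediate type-$1$ vertex, and set $\Lambda(u):=\sum_{x\in B(u)}\beta(x)$. Along the depth-first exploration of the embedded forest, the triples $(|B(u)|,\Lambda(u),K(u))$ form an i.i.d.\ sequence, with $\Ebis[\Lambda]=\nu$ (by \ref{H3}) and $\Ebis[|B|]=\tilde\nu$. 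Writing $N_p:=\sum_{i=1}^p|\mathcal{T}_i^{(1)}|$,
\begin{align*}
F_p = 2\sum_{u}\Lambda(u)-p,\qquad \sum_{x\in\mathcal{F}_p}1 = \sum_{u}|B(u)|,
\end{align*}
so the law of large numbers yields $F_p\approx 2\nu N_p$ and $\sum_{x\in\mathcal{F}_p}1\approx\tilde\nu N_p\approx\tfrac{\tilde\nu}{2\nu}F_p$.

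\emph{Step 2 (tail asymptotics and functional convergence of $N_p$).} The $|\mathcal{T}_i^{(1)}|$ are i.i.d.\ total progenies of a critical Galton--Watson tree with offspring $K$, and the Otter--Dwass identity
\begin{align*}
\Pbis(|\mathcal{T}_1^{(1)}|=m) = \tfrac{1}{m}\,\Pbis(K_1+\cdots+K_m=m-1),
\end{align*}
combined with a local limit theorem for the centred walk $\sum_i(K_i-1)$ — classical Stone LLT in (i), its logarithmic variant in (ii), a heavy-tailed LLT in (iii) — gives $\Pbis(|\mathcal{T}_1^{(1)}|>m)\sim c_\star\, m^{-1/(\kappa\wedge 2)}$ (with a $(\log m)^{1/2}$ correction in (ii)), the constant $c_\star$ being explicit in terms of $\sigma_1^2$, $\mathfrak{c}_2$ or $\mathfrak{c}_\gamma|\Gamma(1-\gamma)|$. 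Hence $|\mathcal{T}_1^{(1)}|$ lies in the domain of attraction of a positive $1/(\kappa\wedge 2)$-stable law, and the functional limit theorem for sums of i.i.d.\ variables in a stable domain of attraction gives, with the normalization $(a_n,b_n)$ of the three regimes,
\begin{align*}
\Big(\tfrac{1}{a_n}N_{\lfloor\alpha b_n\rfloor};\ \alpha\geq 0\Big)\xrightarrow[n\to\infty]{(\mathrm{d})}\big(c\,\tau_\alpha(\VectCoord{\Tilde{Y}}{\kappa});\ \alpha\geq 0\big)
\end{align*}
in $D([0,\infty))$, the limit subordinator identified through its Laplace exponent \eqref{LaplaceTA}. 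Multiplication by $2\nu$ and the LLN estimate from Step~1 then produce convergence of $F_{\lfloor\alpha b_n\rfloor}/a_n$ with the announced prefactors.

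\emph{Step 3 (inversion, joint convergence, uniform LLN).} By Remark~\ref{RemTA} the process $\alpha\mapsto\tau_\alpha(\VectCoord{\Tilde{Y}}{\kappa})$ is continuous and strictly increasing ($\VectCoord{\Tilde{Y}}{\kappa}$ has no positive jumps), with inverse $t\mapsto S(t,\VectCoord{\Tilde{Y}}{\kappa})$. Since $\bar F$ is the right-continuous generalized inverse of the non-decreasing step function $p\mapsto F_p$, continuity of inversion in the Skorokhod topology at continuous strictly monotone limits gives the joint convergence of $(F_{\lfloor\alpha b_n\rfloor}/a_n,\,\bar F(\lfloor tn\rfloor)/b_n)$ to the stated pair, the scaling identity $(c\,\tau_\cdot(\VectCoord{\Tilde{Y}}{\kappa}))^{-1}(t)=c^{-1/(\kappa\wedge 2)}S(t,\VectCoord{\Tilde{Y}}{\kappa})$ matching the constants for $\bar F$. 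Finally, for the uniform $o_p(n)$ bound on $\sum_{x\in\mathcal{F}_p}1-\tfrac{\tilde\nu}{2\nu}F_p$, the algebraic decomposition
\begin{align*}
\sum_{x\in\mathcal{F}_p}1-\tfrac{\tilde\nu}{2\nu}F_p = \sum_{u}\Big[(|B(u)|-\tilde\nu)-\tfrac{\tilde\nu}{\nu}(\Lambda(u)-\nu)\Big]+\tfrac{\tilde\nu}{2\nu}p
\end{align*}
expresses the difference as a centred martingale in the depth-first exploration filtration plus a deterministic $O(a_n)=o(n)$ term; Doob's maximal inequality applied up to $\inf\{p:N_p\geq C n\}$ controls the supremum by $o_p(n)$.

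\emph{Main obstacle.} The technical core is the sharp tail asymptotics of $|\mathcal{T}_1^{(1)}|$: one must invoke a precise local limit theorem for the centred heavy-tailed walk $\sum(K_i-1)$, handle the boundary regime (ii) with its $(\log m)^{1/2}$ correction, and track the multiplicative constants through the successive scaling, LLN and inversion in order to recover the exact prefactors $\tfrac{2\nu}{\sigma_1^2}$, $\tfrac{2\nu}{\mathfrak{c}_2}$ and $\tfrac{2\nu}{\mathfrak{c}_\gamma|\Gamma(1-\gamma)|}$. The uniformity in $p$ of the law of large numbers also demands care since $|B(u)|$, $\Lambda(u)$ and $K(u)$ are dependent triples jointly responsible for the genealogy of $\mathcal{T}_i^{(1)}$; depth-first exploration of the forest reinstates an honest martingale structure to which Doob's inequality applies.
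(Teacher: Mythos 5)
Your proposal is correct in substance but reaches the limit by a genuinely different technical route. Your bubble decomposition along the type-$1$ skeleton is morally the paper's construction of the auxiliary forest $\mathfrak{F}$ (your $2\Lambda(u)$ and $|B(u)|$ play the roles of the paper's offspring variables with means $2\nu$ and $\Tilde{\nu}$, and your $N_p$ is exactly the paper's $\#\mathfrak{F}^1_p$). The divergence is at the core: the paper never touches total-progeny tails; it runs an invariance principle for the Lukasiewicz path $V^1$ of the type-$1$ forest, reads $\#\mathfrak{F}^1_p$ as a first-passage time and $\Bar{F}$ as a running maximum of $-V^1$ (sandwich \eqref{EncadreSupForest}), and gets the uniform law of large numbers for the decorations by monotonicity plus Dini. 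You instead go through Otter--Dwass and a local limit theorem to get the tail of $|\mathcal{T}_1^{(1)}|$, then the stable functional CLT for i.i.d.\ sums, then inverse-map continuity for $\Bar{F}$. Both work; the paper's path-level route delivers the constants directly from the norming of $V^1$ and avoids the LLT with the nonstandard $\sqrt{n\log n}$ norming and the $(\log m)^{-1/2}$ slowly varying correction that your route must handle in the boundary case (ii), while your route uses only classical one-dimensional ingredients and yields the joint convergence of the pair cleanly through inversion.

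Two small corrections, neither fatal. First, $\alpha\mapsto\tau_{\alpha}(\VectCoord{\Tilde{Y}}{\kappa})$ is a pure-jump subordinator, hence not continuous; what the inverse-map argument actually needs is strict monotonicity of the limit, equivalently continuity of its inverse $S(\cdot,\VectCoord{\Tilde{Y}}{\kappa})$, which does hold. Second, in the uniform LLN step Doob's $L^2$ maximal inequality is not available: \ref{H3} and $\Tilde{\nu}<\infty$ only give first moments of $\Lambda(u)$ and $|B(u)|$. Use the weak-type maximal inequality for the centred partial sums together with the $L^1$ law of large numbers (or, as the paper does, the strong LLN plus monotonicity and Dini), after restricting to the event $\{N_{\lfloor M b_n\rfloor}\leq M' n\}$, whose probability is controlled by your Step 2.
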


\section{Proofs of our main results}

This section is devoted to the proofs of our three theorems and our corollary, it is organized as follows: we first give several preliminary results on the random walk $\X$ that will be useful for the proofs of our theorems. With these preliminary results, we will be able to show Theorem \ref{Th2}, Theorem \ref{Th1} and Theorem \ref{Th3}. We end this section with the proof of Corollary \ref{Coro1}.

\subsection{Preliminary results}

The proof of Theorem \ref{Th2} and the ones of Theorem \ref{Th1} and \ref{Th3} share several common preliminary steps we collect in this section.

\vspace{0.1cm}

\noindent Let us recall that $\tau^0=0$, $\tau^j=\inf\{k>\tau^{j-1}; X_{k-1}=e^*,X_k=e\}$ for any $j\in\N^*$, for all $m\in\N^*$ and any $x\in\T$, $N_x^m=\sum_{j=1}^m\un_{\{X_{j-1}=x^*, X_j=x\}}$ and $\mathcal{R}_m=\{x\in\T;\;N_x^m\geq 1\}$.
\begin{Fact}[Lemma 3.1, \cite{AidRap}]\label{FactGWMulti}
    Under $\P$, for any $p\in\N^*$, $(\mathcal{R}_{\tau^p},(N_x^{\tau^p};\; x\in\mathcal{R}_{\tau^p}))$ is a multi-type Galton-Watson tree with initial type equal to $p$. Moreover, we have the following characterization: for any $x\in\T$, $x\not=e$ and any $k_1,\ldots,k_{N(x)}\in\N^*$
    \begin{align*}
        \P^{\mathcal{E}}\Big(\bigcap_{i=1}^{N(x)}\{N_{x^i}^{\tau^1}=k_i\}\big|N_z^{\tau^1};\; z\leq x\Big)=\frac{(N_x^{\tau^1}-1+\sum_{i=1}^{N(x)}k_i)!}{(N_x^{\tau^1}-1)!k_1!\cdots k_{N(x)}!}\times p^{\mathcal{E}}(x,x^*)^{N_x^{\tau^1}}\prod_{i=1}^{N(x)}p^{\mathcal{E}}(x,x^i)^{k_i}.
    \end{align*}
 \end{Fact}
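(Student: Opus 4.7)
The core tool is a coupling that makes the local Markov structure explicit: for every $x\in\T$, encode the successive choices of $\X$ at $x$ as an i.i.d.\ sequence $(C_k^x)_{k\geq 1}$ with law $p^{\mathcal{E}}(x,\cdot)$ on $\{x^*,x^1,\ldots,x^{N(x)}\}$. By the strong Markov property under $\P^{\mathcal{E}}$, the sequences $(C_k^x)_{k\geq 1}$, $x\in\T$, can be realised as mutually independent, and the trajectory of $\X$ is a deterministic function of them.

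\textbf{Step 1 (counting visits at a single vertex).} Fix $x\in\T\setminus\{e\}$ and set $L_x:=\sum_{j=0}^{\tau^1}\un_{\{X_j=x\}}$; $\X$ exhausts exactly the first $L_x$ entries of $(C_k^x)$. Excursion balance (since $X_{\tau^1}=e$, down- and up-crossings of each edge match in $[0,\tau^1]$) identifies the number of entries equal to $x^*$ among $C_1^x,\ldots,C_{L_x}^x$ with $N_x^{\tau^1}$, and the number equal to $x^i$ with $N_{x^i}^{\tau^1}$, whence
\begin{align*}
    L_x = N_x^{\tau^1}+\sum_{i=1}^{N(x)}N_{x^i}^{\tau^1}.
\end{align*}
Moreover, since $x$ lies strictly below the root, any last departure of $\X$ from $x$ toward a child $x^i$ would force a further visit to $x$ on the return trip to $e$ by time $\tau^1$, contradicting the maximality of $L_x$; hence $C_{L_x}^x=x^*$.

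\textbf{Step 2 (negative multinomial law and GW structure).} Conditional on $N_x^{\tau^1}=n_x$, $L_x$ is the index of the $n_x$-th occurrence of $x^*$ in $(C_k^x)$. The event $\bigcap_i\{N_{x^i}^{\tau^1}=k_i\}$ thus amounts to fixing position $v:=n_x+\sum_i k_i$ of $(C_k^x)$ to be $x^*$ and distributing $(n_x-1)$ remaining $x^*$'s together with $k_i$ copies of each $x^i$ among the first $v-1$ positions. Summing the probabilities over the $\frac{(v-1)!}{(n_x-1)!\,k_1!\cdots k_{N(x)}!}$ such arrangements, each of deterministic weight $p^{\mathcal{E}}(x,x^*)^{n_x}\prod_i p^{\mathcal{E}}(x,x^i)^{k_i}$, yields the announced formula. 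The mutual independence of $(C_k^x)_{x\in\T}$ combined with the fact that this conditional law depends only on $N_x^{\tau^1}$ (not on the counts at strict ancestors or at siblings) delivers the multi-type Galton-Watson property; for arbitrary $p\in\N^*$ the argument is identical with $\tau^p$ in place of $\tau^1$, and the initial type is $N_e^{\tau^p}=p$ by definition of $\tau^p$.

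\textbf{Main obstacle.} The only delicate point is the pair of claims $L_x=N_x^{\tau^1}+\sum_i N_{x^i}^{\tau^1}$ and $C_{L_x}^x=x^*$ in Step 1: both rely on $\tau^1$ being a first passage time to the specific vertex $e$, and both fail for $x=e$, which is precisely why the displayed characterisation excludes the root.
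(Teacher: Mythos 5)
The paper does not actually prove this statement: it is imported verbatim as Fact \ref{FactGWMulti} from Lemma 3.1 of \cite{AidRap}, so there is no in-paper argument to compare against; your coin-sequence coupling plus negative-multinomial count is, however, exactly the natural (and essentially the standard) route to it, and your Step 1 bookkeeping ($L_x=N_x^{\tau^1}+\sum_i N_{x^i}^{\tau^1}$, last used coin at $x$ equal to $x^*$, both valid because $x\neq e$ and $\tau^1<\infty$ a.s.\ in the recurrent regime) and the combinatorial count in Step 2 are correct.

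The genuine gap is at the point you dispose of in one clause: ``the fact that this conditional law depends only on $N_x^{\tau^1}$ (not on the counts at strict ancestors or at siblings)'' is precisely the assertion to be proved, since the conditioning variables $N_z^{\tau^1}$, $z\leq x$, are a priori functions of \emph{all} the coin sequences, so mutual independence of $(C^x_k)_{x}$ alone does not give the displayed conditional probability; as written the argument is circular at its crux (and your ``main obstacle'' paragraph points at the easy deterministic identities of Step 1 rather than at this). To close it you need the measurability decomposition that your coupling is built for: every excursion of $\X$ into the subtree $\{u\geq x\}$ begins with a move $x^*\to x$ and ends with the return to $x^*$, so the walk observed outside that subtree up to $\tau^1$ — and hence the whole family $(N_z^{\tau^1};\, z\leq x)$, all of whose crossings are made by coins at vertices $u\not\geq x$ — is a deterministic function of the coins attached to vertices outside the subtree (here one uses $\tau^1<\infty$ a.s.\ so that each entrance is matched by an exit); on the other hand, given $N_x^{\tau^1}=n_x$, the vector $(N_{x^i}^{\tau^1})_i$ is a function of $(C^x_k)_{k\geq1}$ alone, namely the occupation counts before the $n_x$-th occurrence of $x^*$. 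Independence of the two coin blocks then yields the formula, and running the same decomposition generation by generation (conditioning on the coins at vertices of generation $\leq\ell-1$, which determine $(N_z^{\tau^p};\,|z|\leq\ell)$, and using that the offspring counts of distinct $|x|=\ell$ are functions of disjoint, independent coin families) is what actually ``delivers'' the full multi-type Galton--Watson property that you currently only gesture at; the identification of the initial type as $N_e^{\tau^p}=p$ is fine.
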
   
\noindent Fact \ref{FactGWMulti}, together with the independence of the increments of the branching random walk $(\T,(V(x);\; x\in\T))$ leads to the following mixed branching property:

\vspace{0.3cm}

\noindent \textbf{The mixed branching property} 

\vspace{0.1cm}

\noindent Let $p,\ell\in\N^*$ and denote by $\mathcal{G}^m_{\ell}$ the sigma-algebra generated by the collection $\{x\in\T,\; |x|\leq\ell;\; N_x^{m}, V(x)\}$. Let $z\in\bigcup_{k\in\N^*}\N^k$. For any measurable and non-negative function $f_z:\N^{\N}\times\R^{\N}\to\R$
\begin{align}\label{BP}
    \E\Big[\prod_{|u|=\ell}\un_{\{N_u^{\tau^p}\geq 1\}}f_u\big((N_x^{\tau^p};\; x>u),(V_u(x);\; x>u)\big)\big|\mathcal{G}^{\tau^p}_{\ell}\Big]=\prod_{|u|=\ell}\un_{\{N_u^{\tau^p}\geq 1\}}F_u\big(N_u^{\tau^p}\big),
    \tag{\textbf{MBP}}
\end{align}
where for any $m\in\N^*$, $F_u(m):=\E[f_u((N_x^{\tau^m}; x>e),(V(x); x>e))]$. We will always refer to \eqref{BP} as the mixed branching property.
\begin{remark}[Return times to the root $e$]
    Define $T_e^j$ to be the $j$-th return time to the root $e$: $T_e=0$ and for any $j\in\N^*$, $T^j_e=\inf\{k>T^{j-1}_e;\; X_k=e\}$. Note that, under $\P$, $(\mathcal{R}_{T_e^p},(N_x^{T_e^p};\; x\in\mathcal{R}_{T_e^p}))$ is not a multi-type Galton-Watson tree as described previously. We have, $\P^{\mathcal{E}}(N_e^{T_e^p}<p)>0$ and $\P^{\mathcal{E}}(\sum_{|x|=1}N_x^{T_e^p}\leq p)=1$. However, under $\P(\cdot|\mathcal{G}^{T_e^p}_{2})$, $(\{x\in\mathcal{R}_{T_e^p}, |x|\geq 2\},(N_x^{T_e^p};\; x\in\mathcal{R}_{T_e^p},|x|\geq 2))$ is distributed as $(\{x\in\mathcal{R}_{\tau^p},|x|\geq 2\},(N_x^{\tau^p};\; x\in\mathcal{R}_{\tau^p}),|x|\geq 2)$ under $\P(\cdot|\mathcal{G}^{\tau^p}_{2})$. Hence, with a few minor modifications, replacing $\mathcal{L}^{\lfloor tn\rfloor}$ in Theorem \ref{Th2} (resp. $T^{\lfloor\alpha n\rfloor}$ in Theorem \ref{Th1}) with $\mathcal{L}_e^{\lfloor tn\rfloor}$ (resp. $T_e^{\lfloor\alpha n\rfloor}$) does not affect the proofs.
\end{remark}

\noindent The multi-type Galton-Watson tree $(\mathcal{R}_{\tau^1},(N_x^{\tau^1};\; x\in\mathcal{R}_{\tau^1}))$ plays a key role in our study because: 
\begin{Fact}\label{GWF_Hyp}
   If $(\mathcal{R}_{1,i},(\beta_i(x);\; x\in\mathcal{R}_{1,i}))$, $i\in\N^*$, are independent copies of $(\mathcal{R}_{\tau^1},(N_x^{\tau^1}; x\in\mathcal{R}_{\tau^1}))$ under $\P$, then $\mathcal{F}^1:=((\mathcal{R}_{1,i},(\beta_i(x);\; x\in\mathcal{R}_{1,i})))_{i\in\N^*}$ is a multi-type Galton-Watson forest satisfying hypothesis \ref{H1}, \ref{H2} and \ref{H3}. Indeed, by definition, $\P$-almost surely, $N_e^{\tau^1}=1$ so \ref{H1} holds. For hypothesis \ref{H2} and \ref{H3}, we have, by definition, together with section 6.2 of \cite{AidRap} for $\kappa>2$ and Proposition 2 in \cite{deRaph1} for $\kappa\in(1,2]$, $\E[\sum_{x\in\mathcal{R}_{1,1}}\un_{\{G^1_{1}(x)=1,\; \beta_1(x)=1\}}]=\E[\sum_{x\in\T}\un_{\{N_x^{\tau^1}=1,\; \min_{e<z<x}N_y^{\tau^1}\geq 2\}}]=1$ and, also note that we have $\E[\sum_{x\in\mathcal{R}_{1,1}}\beta(x)\un_{\{G^1_1(x)=1\}}]=\E[\sum_{x\in\T}N_x^{\tau^1}\un_{\{\min_{e<z<x}N_y^{\tau^1}\geq 2\}}]=1/C_{\infty}<\infty$. Also, when $\kappa>2$, the variance of $\sum_{x\in\T}N_x^{\tau^1}\un_{\{\min_{e<z<x}N_y^{\tau^1}\geq 2\}}$ exists and is equal to $2(c_0)^2/C_{\infty}$. Finally, when $\kappa\in(1,2]$,  the limit $c_{\kappa}=\lim_{m\to\infty}m^{\kappa}\P(\sum_{x\in\T}\un_{\{N_x^{\tau^1}=1,\; \min_{e<z<x}N_y^{\tau^1}\geq 2\}}>m)$ exists, see \eqref{TailType1}.
\end{Fact}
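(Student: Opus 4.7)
The statement bundles four claims: (i) each copy $(\mathcal{R}_{1,i},(\beta_i(x);\, x\in\mathcal{R}_{1,i}))$ is a multi-type Galton-Watson tree under $\P$; (ii) hypotheses \ref{H1}, \ref{H2} and \ref{H3} hold; (iii) the variance $\sigma_1^2$ is finite and equal to $2(c_0)^2/C_{\infty}$ when $\kappa>2$; and (iv) the tail limit $c_\kappa$ exists in $(0,\infty)$ when $\kappa\in(1,2]$. Independence of the copies is built into the definition of $\mathcal{F}^1$, so the ``forest'' aspect is automatic once the single-tree statement is established, and my plan is to handle each item in turn, invoking Fact \ref{FactGWMulti} and the relevant literature.

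For (i), I would apply Fact \ref{FactGWMulti} with $p=1$ directly: the quenched formula given there shows that, conditionally on $\mathcal{E}$ and on the ancestral types $(N_z^{\tau^1})_{z\leq u}$, the joint law of the offspring types $(N_{u^i}^{\tau^1})_{i}$ depends only on the parent's type $N_u^{\tau^1}$ and on the local transition probabilities $p^{\mathcal{E}}(u,\cdot)$. Averaging over the environment, the branching property of the potential $(\T,V)$, which makes the local weights at different vertices i.i.d.\ copies of a common law, leaves an annealed conditional distribution of the children's types that depends only on the parent's type---this is exactly the multi-type Galton-Watson structure. Hypothesis \ref{H1} is immediate from the definition of $\tau^1$ as the first crossing of the oriented edge $(e^*,e)$: this forces $N_e^{\tau^1}=1$ deterministically, i.e.\ $\beta(\VectCoord{e}{i})=1$ almost surely.

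For \ref{H2} and \ref{H3}, the two identities to prove are
\begin{align*}
    \E\Big[\sum_{x\in\T}\un_{\{N_x^{\tau^1}=1,\,\min_{e<z<x}N_z^{\tau^1}\geq 2\}}\Big]=1, \qquad \E\Big[\sum_{x\in\T}N_x^{\tau^1}\un_{\{\min_{e<z<x}N_z^{\tau^1}\geq 2\}}\Big]=\frac{1}{C_{\infty}},
\end{align*}
which are established in Section 6.2 of \cite{AidRap} for $\kappa>2$ and in Proposition 2 of \cite{deRaph1} for $\kappa\in(1,2]$. The route I would follow is a many-to-one argument: Fact \ref{FactGWMulti} lets us express the quenched probability of $\{N_x^{\tau^1}=1,\,\min_{e<z<x}N_z^{\tau^1}\geq 2\}$ as an explicit product along the ancestral spine of $x$ involving the potential $V$. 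Summing over $x\in\T$ and applying \eqref{ManyTo1} reduces the problem to expectations involving the one-dimensional walk $(S_j)$; the quantity $\E[(\sum_{j\geq 0}e^{-S_j})^{-2}]=C_\infty$ appears naturally, yielding $\nu=1/C_\infty$, and the identity for \ref{H2} follows by a telescoping identity (essentially the fact that there is a.s.\ exactly one ``first type-$1$ vertex'' on each infinite ray of the spine decomposition).

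The quantitative refinements for (iii) and (iv) follow from finer versions of the same many-to-one calculation. The main obstacle, were one to redo the proof from scratch rather than cite it, is the sub-diffusive regime: establishing that $m^\kappa\P(\cdots>m)$ converges to a positive finite limit requires combining Liu's tail theorem for the martingale limit $W_\infty$ (giving the constant $c_{\infty,\kappa}$) with a Goldie-type analysis of the smoothing equation satisfied by the law of the type-$1$ counting variable in \eqref{TailType1}; assumption \ref{Assumption3} is what is needed to feed this analysis. Since these delicate estimates are performed in \cite{deRaph1}, my plan is simply to invoke them, which reduces the proof of Fact \ref{GWF_Hyp} to the combination of Fact \ref{FactGWMulti} and the quoted results.
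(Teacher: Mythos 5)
Your proposal is correct and follows essentially the same route as the paper: the multi-type Galton--Watson structure and \ref{H1} are read off from Fact \ref{FactGWMulti} and the definition of $\tau^1$, while \ref{H2}, \ref{H3}, the variance identity for $\kappa>2$ and the tail limit \eqref{TailType1} for $\kappa\in(1,2]$ are obtained by citing Section 6.2 of \cite{AidRap} and Proposition 2 of \cite{deRaph1}, exactly as the paper does. Your additional sketches (many-to-one reduction, Goldie-type tail analysis) are plausible commentary on how those cited results are proved, but they are not needed since, like the paper, you ultimately invoke the references.
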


\vspace{0.3cm}

\noindent Now, introduce $H_x:=\sum_{e\leq u\leq x}e^{V(u)-V(x)}$ and $T_x:=\inf\{k\geq 1,\; X_k=x\}$.
\begin{lemm}\label{LemmaEdge} It is very well known that $\P^{\mathcal{E}}(T_x<\tau^1)=e^{-V(x)}/H_x$ for any $x\in\T$, $x\not=e$, and we can also deduce from Fact \ref{FactGWMulti} the following
    \begin{align*}
        \E^{\mathcal{E}}\big[N_x^{\tau^1}\big]=e^{-V(x)},
    \end{align*}
    and for any $y\not=e$
    \begin{align*}
        \E^{\mathcal{E}}\big[N_x^{\tau^1}N_y^{\tau^1}\big]=e^{-V(y)}(2H_x-1)\;\;\textrm{ if } x\leq y,\;\; \E^{\mathcal{E}}\big[N_x^{\tau^1}N_y^{\tau^1}\big]=2H_{x\land y}e^{V(x\land y)}e^{-V(x)}e^{-V(y)}\;\;\textrm{ otherwise}.
    \end{align*}
\end{lemm}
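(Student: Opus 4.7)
The plan is to treat the first identity $\P^{\mathcal{E}}(T_x<\tau^1)=e^{-V(x)}/H_x$ as classical: since any excursion of $X$ off the ancestral path from $e^*$ to $x$ returns to the path before possibly hitting $x$, the problem reduces to a one-dimensional gambler's ruin for an inhomogeneous nearest-neighbour walk on $\{-1,0,\dots,|x|\}$ (reflected at $-1$) with step-ratios $p(x_i{\to}x_{i-1})/p(x_i{\to}x_{i+1})=e^{V(x_{i+1})-V(x_i)}$, whose explicit solution at $0$ is precisely $e^{-V(x)}/H_x$. All remaining identities will be deduced from Fact~\ref{FactGWMulti} together with the mixed branching property~\eqref{BP}.

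For the mean, I would induct on $|x|$, with base case $N_e^{\tau^1}=1$ $\P$-almost surely. Writing $p_*=p^{\mathcal{E}}(x,x^*)$ and $p_i=p^{\mathcal{E}}(x,x^i)$, a single index shift $k_i\mapsto k_i-1$ in the conditional law of Fact~\ref{FactGWMulti}, combined with the negative-binomial identity $\sum_{n\geq 0}\binom{m+n-1}{n}(1-p_*)^n=p_*^{-m}$, produces
\begin{align*}
\E^{\mathcal{E}}\!\bigl[N_{x^i}^{\tau^1}\,\big|\,N_x^{\tau^1}\bigr]=N_x^{\tau^1}\,\frac{p_i}{p_*}=N_x^{\tau^1}\,e^{V(x)-V(x^i)},
\end{align*}
so that, by the tower property and the inductive hypothesis, $\E^{\mathcal{E}}[N_{x^i}^{\tau^1}]=e^{-V(x^i)}$.

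For the comparable case $x\leq y$, I would first establish $\E^{\mathcal{E}}[(N_x^{\tau^1})^2]=e^{-V(x)}(2H_x-1)$ by a parallel induction. A double shift $k_i\mapsto k_i-2$ in the same conditional law yields $\E[k_i(k_i-1)\mid m]=m(m+1)p_i^2/p_*^2$, from which one extracts the recurrence $a_x:=e^{V(x)}\E^{\mathcal{E}}[(N_x^{\tau^1})^2]=1+(a_{x^*}+1)\,e^{V(x^*)-V(x)}$. Since $H_x$ satisfies the companion recursion $H_x=1+e^{V(x^*)-V(x)}H_{x^*}$ with $H_e=1=a_e$, induction forces $a_x=2H_x-1$. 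Iterating the one-step identity of the previous paragraph along the geodesic from $x$ to $y$ yields $\E^{\mathcal{E}}[N_y^{\tau^1}\mid N_x^{\tau^1}]=N_x^{\tau^1}\,e^{V(x)-V(y)}$, and therefore
\begin{align*}
\E^{\mathcal{E}}[N_x^{\tau^1}N_y^{\tau^1}]=\E^{\mathcal{E}}[(N_x^{\tau^1})^2]\,e^{V(x)-V(y)}=e^{-V(y)}(2H_x-1).
\end{align*}

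When $x$ and $y$ are not comparable, I would set $z=x\wedge y$ and let $a\neq b$ be the children of $z$ with $a\leq x$ and $b\leq y$. The mixed branching property~\eqref{BP} applied at level $|z|+1$ makes $N_x^{\tau^1}$ and $N_y^{\tau^1}$ conditionally independent given $(N_{z^i}^{\tau^1})_i$, so the tower property reduces the computation to $\E^{\mathcal{E}}[N_a^{\tau^1}N_b^{\tau^1}]$; a two-index shift in Fact~\ref{FactGWMulti} gives $\E[k_ak_b\mid m]=m(m+1)p_ap_b/p_*^2$, and combining this with $a_z=2H_z-1$ produces $\E^{\mathcal{E}}[N_a^{\tau^1}N_b^{\tau^1}]=2H_z\,e^{V(z)-V(a)-V(b)}$. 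Substitution into the tower decomposition $\E^{\mathcal{E}}[N_x^{\tau^1}N_y^{\tau^1}]=\E^{\mathcal{E}}[N_a^{\tau^1}N_b^{\tau^1}]\,e^{V(a)+V(b)-V(x)-V(y)}$ delivers the claimed formula. The only delicate piece of work is the factorial-moment identities for the negative-multinomial family of Fact~\ref{FactGWMulti}; once they are in hand, each step reduces to algebraic bookkeeping.
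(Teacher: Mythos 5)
Your proposal is correct, and it is genuinely more than what the paper itself provides: the paper gives no in-text proof of Lemma~\ref{LemmaEdge}, deferring entirely to Lemma~3.6 of \cite{AndDiel3}, whereas you supply the complete derivation that the lemma's wording ("we can also deduce from Fact~\ref{FactGWMulti}") only gestures at. I checked the details: the negative-multinomial factorial moments $\E[k_i\mid m]=mp_i/p_*$, $\E[k_i(k_i-1)\mid m]=m(m+1)p_i^2/p_*^2$, $\E[k_ak_b\mid m]=m(m+1)p_ap_b/p_*^2$ are right, the recursion $a_x=1+(a_{x^*}+1)e^{V(x^*)-V(x)}$ together with $H_x=1+e^{V(x^*)-V(x)}H_{x^*}$ and $a_e=H_e=1$ does give $a_x=2H_x-1$, and propagating the one-step mean along the geodesic and through the common ancestor reproduces both stated product formulas (indeed $e^{-V(x)}(2H_x-1)e^{V(x)-V(y)}=e^{-V(y)}(2H_x-1)$ and $2H_ze^{V(z)-V(a)-V(b)}\cdot e^{V(a)+V(b)-V(x)-V(y)}=2H_{x\wedge y}e^{V(x\wedge y)}e^{-V(x)-V(y)}$). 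Two cosmetic points. First, the identities to prove are quenched ($\E^{\mathcal{E}}$), while \eqref{BP} as stated in the paper is an annealed statement; the conditional independence of the subtrees above the two children of $x\wedge y$ that you need is the quenched branching/Markov structure underlying Fact~\ref{FactGWMulti} (the conditional law of the children's counts given the ancestors' counts depends only on the parent's count and the local environment, independently across vertices), so it is cleaner to invoke that directly rather than \eqref{BP}. Second, in the gambler's-ruin reduction the boundary $-1$ (i.e. $e^*$) should be treated as absorbing for the hitting computation — the reflection at $e^*$ plays no role since $\{T_x<\tau^1\}=\{T_x<T_{e^*}\}$ — but the resulting formula $\big(\sum_{i=0}^{|x|}e^{V(x_i)}\big)^{-1}=e^{-V(x)}/H_x$ is exactly as you say. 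Neither point is a gap.
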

\noindent See for example Lemma 3.6 in \cite{AndDiel3} for an explicit proof.

\vspace{0.2cm}

\noindent When studying the random walk $\X$, it is quite common to select only the vertices in the Galton-Watson tree $\T$ that we believe to be relevant. 

\vspace{0.3cm}

\noindent\textbf{Selection of vertices in the random environment $\mathcal{E}$}

\vspace{0.1cm}

\noindent 

\noindent Similarly to the reflecting barrier introduced by Y. Hu and Z. Shi in \cite{HuShi15}, let $\ell\in\N$, $\lambda\geq 0$ and define the following $(\ell,\lambda)$-regular line 
\begin{align}
    \mathcal{O}_{\ell,\lambda}:=\big\{x\in\T;\, 1\leq|x|\leq\ell:\; \max_{1\leq i\leq |x|}H_{x_i}\leq\lambda\big\}.
\end{align}
Also introduce for any $h>0$
\begin{align}
    \mathcal{V}^{h}_{\ell,\lambda}:=\{x\in\mathcal{O}_{\ell,\lambda};\; \underline{V}(x)\geq -h\},
\end{align}
with $\underline{V}(x):=\min_{1\leq i\leq |x|}V(x_i)$. We claim that the relevant vertices of the range are those in $\mathcal{V}^{h}_{\ell,\lambda}$ for some $\ell$ and $\lambda$:
\begin{lemm}\label{RegLine}
    There exists $\eta_0>0$ with $|\psi(\eta_0)|>0$ such that for all $r,h>0$, $\kappa>1$ and any $r_1>r/\kappa$
    \begin{align*}
        \liminf_{n\to\infty}\P\big(\mathcal{R}_{\tau^{n^{r}},\ell_n}\subset\mathcal{V}^h_{\ell_n,n^{r_1}}\big)\geq 1-\frac{e^{-\eta_0h}}{1-e^{-|\psi(\eta_0)|}},
    \end{align*}
    where $\mathcal{R}_{s,\ell}:=\{x\in\mathcal{R}_{s};\; 1\leq|x|\leq\ell\}$, $(\ell_n)$ is a non-decreasing sequence of positive integers such that $\lim_{n\to\infty}\ell_n=\infty$ but $\ell_n=o(n^{r_1\kappa-r})$ as $n\to\infty$.
\end{lemm}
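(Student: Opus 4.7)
The plan is to decompose the complementary event $\{\mathcal{R}_{\tau^{n^r},\ell_n}\not\subset\mathcal{V}^h_{\ell_n,n^{r_1}}\}$ as $A_n\cup B_n$, with
$A_n:=\{\exists\, y\in\mathcal{R}_{\tau^{n^r}}:\ 1\leq|y|\leq\ell_n,\ H_y>n^{r_1}\}$ and $B_n:=\{\exists\, y\in\mathcal{R}_{\tau^{n^r}}:\ 1\leq|y|\leq\ell_n,\ V(y)<-h\}$. This is legitimate because every ancestor of a range vertex is itself in the range, so that if $x\in\mathcal{R}_{\tau^{n^r},\ell_n}$ violates $\max_iH_{x_i}\leq n^{r_1}$ or $\underline V(x)\geq-h$, then the offending ancestor $y=x_i$ is a valid witness. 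I will show $\P(A_n)\to 0$ and $\P(B_n)\leq e^{-\eta_0h}/(1-e^{-|\psi(\eta_0)|})$ uniformly in $n$, for a suitable $\eta_0\in(1,\kappa)$.

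To bound $\P(A_n)$, I use that the excursions of $\X$ away from $e^*$ are i.i.d.\ under $\P^{\mathcal{E}}$, so by Lemma \ref{LemmaEdge}, $\P^{\mathcal{E}}(y\in\mathcal{R}_{\tau^{n^r}})\leq n^r\P^{\mathcal{E}}(T_y<\tau^1)=n^re^{-V(y)}/H_y$. A union bound over $y$ with $1\leq|y|\leq\ell_n$ combined with the many-to-one lemma \eqref{ManyTo1} yields
\begin{align*}
\P(A_n)\,\leq\,n^r\sum_{k=1}^{\ell_n}\E\bigl[\un_{\{\tilde H_k>n^{r_1}\}}/\tilde H_k\bigr]\,\leq\,n^r\ell_n\,\P(W>n^{r_1})/n^{r_1},
\end{align*}
where $\tilde H_k:=\sum_{i=0}^ke^{S_i-S_k}\stackrel{d}{=}\sum_{j=0}^ke^{-S_j}\leq W:=\sum_{j\geq 0}e^{-S_j}$ by time-reversal of the i.i.d.\ increments, with $W<\infty$ a.s.\ because $\E[S_1]=-\psi'(1)>0$. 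The implicit renewal theorem of Kesten--Goldie, applied to the affine perpetuity $W\stackrel{d}{=}1+e^{-S_1}W'$ with $W'$ an independent copy of $W$, gives the sharp tail $\P(W>m)\lesssim m^{-(\kappa-1)}$ --- the exponent being the unique positive root of $\E[e^{-\alpha S_1}]=e^{\psi(1+\alpha)}=1$. Substituting, $\P(A_n)\lesssim\ell_n n^{r-r_1\kappa}\to 0$, by the hypothesis $\ell_n=o(n^{r_1\kappa-r})$.

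To bound $\P(B_n)$, I simply discard information via $\un_{\{y\in\mathcal{R}_{\tau^{n^r}}\}}\leq 1$ and apply \eqref{ManyTo1}: $\P(B_n)\leq\sum_{k=1}^{\ell_n}\E[e^{S_k}\un_{\{S_k<-h\}}]$. Pick $\eta_0\in(1,\kappa)$, which is possible since $\psi$ is convex with $\psi(1)=\psi(\kappa)=0$ and $\psi'(1)<0$, so $\psi<0$ on $(1,\kappa)$. The Chernoff-type inequality $\un_{\{S_k<-h\}}\leq e^{-\eta_0(S_k+h)}$ combined with $\E[e^{-tS_1}]=e^{\psi(1+t)}$ at $t=\eta_0-1$ gives $\E[e^{S_k}\un_{\{S_k<-h\}}]\leq e^{-\eta_0h}e^{k\psi(\eta_0)}$. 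Since $\psi(\eta_0)<0$, summing the geometric series in $k$ yields $\P(B_n)\leq e^{-\eta_0h}/(e^{|\psi(\eta_0)|}-1)\leq e^{-\eta_0h}/(1-e^{-|\psi(\eta_0)|})$. Taking $\liminf_n$ in the resulting lower bound for $\P(\mathcal{R}_{\tau^{n^r},\ell_n}\subset\mathcal{V}^h_{\ell_n,n^{r_1}})$ then concludes the proof.

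The main technical obstacle is the sharp tail exponent $\kappa-1$ for $W$: a soft moment bound using $\E[W^s]<\infty$ for $s<\kappa-1$ would only yield $\P(W>m)=O(m^{-s})$, insufficient to absorb the prefactor $n^r$ when $\ell_n$ grows nearly as fast as $n^{r_1\kappa-r}$ (say $\ell_n=n^{r_1\kappa-r}/\log n$). Only the boundary exponent $\kappa-1$ produced by Kesten--Goldie --- whose prerequisites (non-lattice distribution, regularity of $\psi$ near $\kappa$) are precisely supplied by Assumptions \ref{Assumption2} and \ref{Assumption3} --- is strong enough to handle the full range of sequences $\ell_n$ allowed in the statement.
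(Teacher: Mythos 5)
Your proof is correct and follows essentially the same route as the paper: the same witness/ancestor decomposition into an $H$-event and a $V$-event, the same excursion union bound $\P^{\mathcal{E}}(T_y<\tau^{n^r})\leq n^r e^{-V(y)}/H_y$ from Lemma \ref{LemmaEdge} followed by the many-to-one lemma, and the same Chernoff bound at an $\eta_0$ with $\psi(\eta_0)<0$ for the potential part. The only difference is the source of the sharp tail $\Pb(H^S_k>m)\leq C m^{-(\kappa-1)}$, which the paper cites from Lemma 2.2 of \cite{AndDiel3} while you rederive it by dominating $H^S_k$ by the perpetuity $W=\sum_{j\geq0}e^{-S_j}$ and invoking Goldie's implicit renewal theorem, whose hypotheses are indeed covered by Assumptions \ref{Assumption2} and \ref{Assumption3}.
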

\begin{proof}
First, note that since for any vertex $x\in\T$ visited by the random walk $\X$ before the instant $\tau^{n^{r}}$, its ancestor $x_1,\ldots,x_{|x|-1}$ are also visited before $\tau^{n^{r}}$, it is enough to show that
\begin{align*}
    \liminf_{n\to\infty}\P\big(\mathcal{R}_{\tau^{n^{r}},\ell_n}\subset\{x\in\T;\; 1\leq|x|\leq\ell_n,\; \underline{V}(x)\geq -h\textrm{ and }H_x\leq n^{r_1}\}\big)\geq 1-\frac{e^{-\eta_0h}}{1-e^{-|\psi(\eta_0)|}}.
\end{align*}
One can see that for any $n\in\N^*$
\begin{align*}
    \P\big(\exists\; j\leq \tau^{n^r};\; 1\leq|X_j|\leq\ell_n\textrm{ and }\underline{V}(X_j)<-h\big)\leq\Pb\big(\min_{x\in\T} V(x)<-h\big)\leq \frac{e^{-\eta_0h}}{1-e^{-|\psi(\eta_0)|}},
\end{align*}
where $\eta_0>0$ is chosen such that $\psi(\eta_0)<0$ which is possible since $\psi'(1)<0$. \\
For every environment $\mathcal{E}$
\begin{align*}
    \P^{\mathcal{E}}\big(\exists\; j\leq \tau^{n^{r}}:\; 1\leq|X_j|\leq\ell_n\textrm{ and }H_x>n^{r_1}\big)=\sum_{1\leq|x|\leq\ell_n}\un_{\{H_{x}>n^{r_1}\}}\P^{\mathcal{E}}\big(T_x<\tau^{n^{r}}\big).
\end{align*}
Then, note that $\P^{\mathcal{E}}(T_x<\tau^{n^{r}})=\sum_{j=1}^{n^{r}}\P^{\mathcal{E}}(\tau^{j-1}<T_x<\tau^j)$ and thanks to the strong Markov property at time $\tau^{j-1}$, $\P^{\mathcal{E}}(\tau^{j-1}<T_x<\tau^j)$ is smaller than $\P^{\mathcal{E}}(T_x<\tau^1)$ which is equal to $e^{-V(x)}/H_x$ by Lemma \ref{LemmaEdge} thus giving
\begin{align*}
    \P^{\mathcal{E}}\big(\exists\; j\leq \tau^{n^{r}}:\; 1\leq|X_j|\leq\ell_n\textrm{ and }H_x>n^{r_1}\big)&\leq\sum_{|x|\leq\ell_n}\un_{\{H_{x}>n^{r_1}\}}\P^{\mathcal{E}}\big(T_x<\tau^{n^{r}}\big) \\ & \leq n^r\sum_{|x|\leq\ell_n}\frac{e^{-V(x)}}{H_x}\un_{\{H_x>n^{r_1}\}} \\ & \leq\frac{1}{n^{r_1-r}}\sum_{|x|\leq\ell_n}e^{-V(x)}\un_{\{H_x>n^{r_1}\}}.
\end{align*}
Now, thanks to the many-to-one Lemma \eqref{ManyTo1}
\begin{align*}
    \P\big(\exists\; j\leq \tau^{n^{r}}:\; 1\leq|X_j|\leq\ell_n\textrm{ and }H_x>n^{r_1}\big)\leq\frac{1}{n^{r_1-r}}\sum_{k=0}^{\ell_n}\Pb(H^S_{k}>n^{r_1}),
\end{align*}
where $H^S_k:=\sum_{i=0}^{k}e^{-S_i}$. By (\cite{AndDiel3}, lemma 2.2), there exists a constant $C_{\ref{RegLine}}>0$ such that $\Pb(H^S_{j}>n^{r_1})\leq C_{\ref{RegLine}}/n^{r_1(\kappa-1)}$ so we finally obtain
\begin{align*}
    \P\big(\exists\; j\leq \tau^{n^{r}}:\; 1\leq|X_j|\leq\ell_n\textrm{ and }H_x>n^{r_1}\big)\leq\frac{C_{\ref{RegLine}}\ell_n}{n^{\kappa r_1-r}},
\end{align*}
which goes to $0$ by definition of $\ell_n$ and this completes the proof.
\end{proof}

\vspace{0.3cm}

\noindent\textbf{The subset $\mathcal{B}^p_{\ell}$ of $\T$}

\vspace{0.1cm}

\noindent For any $\ell\in\N$ and $p\in\N^*$, define 
\begin{align}
    \mathcal{B}^{p}_{\ell}:=\big\{x\in\T,\; |x|>\ell;\; N_x^{\tau^p}=1\textrm{ and }\min_{\ell<i<|x|}N_{x_i}^{\tau^p}\geq 2\big\},
\end{align}
with the convention $\mathcal{B}^0_{\ell}=\varnothing$. This set of vertices, first introduced by E. Aïdékon and L. de Raphélis in \cite{AidRap}, plays a key role in our study, so before going any further, let us give some facts and properties about it. The following fact, due to E. Aïdékon and L. de Raphélis (Lemma 7.2 in \cite{AidRap}) and L. de Raphélis (Lemma 22 in \cite{deRaph1}), says that any vertex in $\T$ in a generation large enough has an ancestor visited as most once by the random walk $\X$:
\begin{Fact}\label{PropSetRoots}
    Let $1<s<s_1$ and $|z|>(\log n)^{s_1}$. If $\mathfrak{t}^{p}_z:=\inf\{(\log n)^s<\ell<(\log n)^{s_1};\; N_{z_{\ell}}^{\tau^p}\in\{0,1\}\}$ denotes the first generation larger than $(\log n)^s$ at which the vertex $z$ has an ancestor $u$ such that the oriented edge $(u^*,u)$ is visited at most once by the random walk $\X$ before $\tau^p$, then, for any $r_2>0$, $\Pb$-almost surely
    \begin{align*}
        \P^{\mathcal{E}}\big(\forall\; p\leq n^{r_2},\; \forall\; |z|>(\log n)^{s_1},\; \mathfrak{t}^{p}_z<\infty\big)\underset{n\to\infty}{\longrightarrow} 1.
    \end{align*}
\end{Fact}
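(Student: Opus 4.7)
The plan is a first-moment estimate, combining a monotonicity reduction in $p$, a truncation to ``regular'' environments, and an iterated negative-binomial bound along the ancestral line.

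\textbf{Monotone reduction in $p$.} Since $N_x^{\tau^p}$ is non-decreasing in $p$, the event $\{\mathfrak{t}^p_z=\infty\}=\{\forall\ell\in((\log n)^s,(\log n)^{s_1}),\,N_{z_\ell}^{\tau^p}\ge 2\}$ is non-decreasing in $p$, so $\bigcup_{p\le n^{r_2}}\{\mathfrak t^p_z=\infty\}=\{\mathfrak t^{p_n}_z=\infty\}$ with $p_n:=\lfloor n^{r_2}\rfloor$. Moreover, if such a $z$ exists, its ancestor $u:=z_L$ at generation $L:=\lceil(\log n)^{s_1}\rceil-1$ satisfies $N_{u_\ell}^{\tau^{p_n}}\ge 2$ for every $\ell\in I:=((\log n)^s,L]$, so it suffices to prove
\[
\E\Bigl[\sum_{|u|=L}\P^{\mathcal E}\bigl(\forall\ell\in I,\,N_{u_\ell}^{\tau^{p_n}}\ge 2\bigr)\Bigr]\xrightarrow[n\to\infty]{}0.
\]

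\textbf{Restriction to a regular environment.} I would apply Lemma \ref{RegLine} with $r=r_2$, any $r_1>r_2/\kappa$, and a slowly growing $h_n\to\infty$ to restrict the sum to $u\in\mathcal V^{h_n}_{L,n^{r_1}}$ at the cost of an $O(e^{-\eta_0 h_n})=o(1)$ error. On this regular set, every ancestor $u_\ell$ satisfies $H_{u_\ell}\le n^{r_1}$ and $V(u_\ell)\ge -h_n$, which gives quantitative control of every quantity appearing below.

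\textbf{Iterated negative-binomial.} By marginalising the joint law of Fact \ref{FactGWMulti}, given $N_{u_\ell}^{\tau^{p_n}}=m$ the count $N_{u_{\ell+1}}^{\tau^{p_n}}$ is $\mathrm{NegBin}(m,\tilde q_\ell)$-distributed with $\tilde q_\ell=1/(1+e^{V(u_{\ell+1})-V(u_\ell)})$, hence
\[
\P^{\mathcal E}\bigl(N_{u_{\ell+1}}^{\tau^{p_n}}\ge 2\mid N_{u_\ell}^{\tau^{p_n}}=m\bigr)=1-(1-\tilde q_\ell)^m(1+m\tilde q_\ell).
\]
Using the Markov property of the chain $(N_{u_\ell}^{\tau^{p_n}})_\ell$ together with the many-to-one lemma \eqref{ManyTo1}, the first moment becomes an expectation over the auxiliary random walk $(S_k)$ with $S_\ell=V(u_\ell)$. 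The positive drift $\E[S_1]=-\psi'(1)>0$ ensures that, for a positive fraction of levels $\ell\in I$, the increment $S_{\ell+1}-S_\ell$ exceeds a fixed positive constant, so $\tilde q_\ell$ is bounded away from $1$ and each per-step factor is bounded away from $1$ by a constant (at least when $m\tilde q_\ell$ remains bounded). Iterating over the $|I|\asymp(\log n)^{s_1}$ levels yields a super-polynomial decay in $n$, which comfortably beats the $e^{S_L}$ factor produced by the many-to-one change of measure.

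\textbf{Main obstacle.} The delicate point is to control levels at which the conditional value $m=N_{u_\ell}^{\tau^{p_n}}$ is atypically large, making $m\tilde q_\ell$ of order one and the per-step factor close to $1$. I would address this via a dyadic decomposition of the chain according to the size of $N_{u_\ell}^{\tau^{p_n}}$: Lemma \ref{LemmaEdge} and the regularity of Step~2 provide quenched moment bounds controlling the density of ``spike'' levels along a typical chain, while on the complementary ``typical'' levels the expansion $1-(1-q)^m(1+mq)=O((mq)^2)$ secures a genuinely geometric per-level decay. Once Borel--Cantelli is applied to the (super-polynomially small) expectation, the $\Pb$-almost sure convergence of the quenched probability to $1$ follows.
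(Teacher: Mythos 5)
First, a point of comparison: the paper does not prove this statement at all — it is imported as a Fact, with the proof credited to Lemma 7.2 of \cite{AidRap} and Lemma 22 of \cite{deRaph1} — so your attempt can only be judged on its own merits. Your skeleton is reasonable and exploits the right structural ingredients: the monotonicity of $N_x^{\tau^p}$ in $p$ (so only $p_n=\lfloor n^{r_2}\rfloor$ matters), the reduction to ancestors at generation $L\approx(\log n)^{s_1}$, the observation that by Fact \ref{FactGWMulti} the edge local times $(N_{u_\ell}^{\tau^{p_n}})_\ell$ along a fixed ray form, quenched, a Markov chain started from $p_n$ with negative-binomial transitions whose success probabilities depend only on the potential increments, and the use of the many-to-one formula \eqref{ManyTo1} to turn the first moment into a functional of the auxiliary walk $(S_\ell)$.

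The genuine gap is the central estimate, which you acknowledge as the ``main obstacle'' but do not resolve. After many-to-one, the quantity to control is $\Eb\big[e^{S_L}\,\Phi(S)\big]$, where $\Phi(S)$ is the probability that the negative-binomial chain started from $p_n$ stays $\ge 2$ on the whole window $I$. Your claim that per-step factors ``bounded away from $1$ by a constant \ldots comfortably beat the $e^{S_L}$ factor'' is not justified: the tilt contributes a weight $e^{S_{\ell+1}-S_\ell}$ at every step, while the per-step survival factor is close to $1$ precisely on the ``spike'' levels where the chain is large, so no decay accrues there and the weight is uncompensated. Note that the crude bound $\Phi(S)\le p_ne^{-S_L}/2$ (Markov on $M_L$) only yields $\Eb[e^{S_L}\Phi]\le n^{r_2}/2$, which does not tend to $0$: one must show that staying $\ge 2$ \emph{throughout} the window is exponentially more costly, in the window length, than merely surviving to its end, uniformly over the $e^{S_L}$-weighted environments (including paths where $S$ barely grows and the chain stays large for a long time). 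That is exactly the technical content of the cited lemmas, and your ``dyadic decomposition of spike levels'' paragraph is a plan rather than an argument — Lemma \ref{LemmaEdge} gives first and second moments of $N_x^{\tau^1}$, but it is not explained how these are combined with the tilt to produce a bound that is both uniform and super-polynomially small. A secondary inconsistency: you restrict to a regular environment via Lemma \ref{RegLine} with a slowly growing $h_n$, which only produces a non-summable $o(1)$ error (indeed Lemma \ref{RegLine} contains a polynomially small term $C\ell_n/n^{\kappa r_1-r}$ and is stated as a $\liminf$ bound), yet your conclusion invokes Borel--Cantelli and hence needs summable-in-$n$ error bounds; this could perhaps be patched by taking $h_n\asymp\log n$ and $r_1$ large and reproving the restriction with summable errors, but as written the two steps do not fit together, and the final result would otherwise only be convergence in $\Pb$-probability rather than the claimed $\Pb$-almost sure convergence.
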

\noindent We first deduce from Fact \ref{PropSetRoots} that, with large probability, $(\mathcal{B}^{p}_{(\log n)^2})_{p\leq n^{r}}$ is a non-decreasing sequence of sets for any $r>0$ in the following: 
\begin{lemm}\label{NDSet} For any $r>0$, $\Pb$-almost surely
    \begin{align*}
    \lim_{n\to\infty}\P^{\mathcal{E}}\big((\mathcal{B}^{p}_{(\log n)^2})_{p\leq n^{r}}\textrm{ is non-decreasing}\big)=1,
\end{align*}
where $(\mathcal{B}^{p}_{(\log n)^2})_{p\leq n^{r}}$ is non-decreasing means here that for any $p,p'\in\{1,\ldots,n^r\}$, $p\leq p'$ implies that $\mathcal{B}^{p}_{(\log n)^2}\subset\mathcal{B}^{p'}_{(\log n)^2}$.
\end{lemm}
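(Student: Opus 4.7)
The plan is to reduce the failure event to the quenched event that some vertex $x$ with $|x| > \ell_n := (\log n)^2$ is visited by at least two of the first $n^r$ excursions, and then to bound this via a second moment estimate combined with the many-to-one Lemma~\eqref{ManyTo1}. Since $N_z^{\tau^p}$ is non-decreasing in $p$, the ancestor condition $\min_{\ell_n < i < |x|} N_{x_i}^{\tau^p} \geq 2$ in the definition of $\mathcal{B}^p_{\ell_n}$ is itself monotone in $p$; hence the only way a vertex $x \in \mathcal{B}^p_{\ell_n}$ can fail to lie in $\mathcal{B}^{p'}_{\ell_n}$ for some $p' > p$ is that $N_x$ jumps from $1$ to at least $2$, which requires $x$ to be visited by two distinct excursions.

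For the localisation step, I apply Fact~\ref{PropSetRoots} with $s=5/2$, $s_1=3$ and $r_2=r$: with $\P^{\mathcal{E}}$-probability tending to $1$, every $z$ with $|z| > (\log n)^3$ admits, for each $p \leq n^r$, an ancestor at some generation in $((\log n)^{5/2},(\log n)^3) \subset (\ell_n,|z|)$ whose visit count is at most $1$, contradicting the ancestor condition defining $\mathcal{B}^p_{\ell_n}$. On this event $\mathcal{B}^p_{\ell_n} \subset \{x:\ell_n < |x| \leq (\log n)^3\}$ for every $p \leq n^r$. Lemma~\ref{RegLine}, applied at scale $(\log n)^3$ with some $r_1 > r/\kappa$ and $h$ large, further restricts $x$ to $\mathcal{V}^h_{(\log n)^3, n^{r_1}}$, so that $H_x \leq n^{r_1}$ and $\underline{V}(x) \geq -h$.

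For each such $x$, the $n^r$ excursions are i.i.d.\ under $\P^{\mathcal{E}}$ and each visits $x$ with probability $q_x := e^{-V(x)}/H_x$ by Lemma~\ref{LemmaEdge}. A union bound over pairs of excursions gives $\P^{\mathcal{E}}(x\text{ visited by}\geq 2) \leq \binom{n^r}{2} q_x^2$, so the quenched failure probability is bounded by $\frac{n^{2r}}{2}\sum_{x}q_x^2$ over the restricted set. The many-to-one identity~\eqref{ManyTo1}, combined with $1/H_x^2 \leq 1$, yields
\[
\Eb\Big[\sum_{|x|=k,\,x\in \mathcal{V}^h} q_x^2\Big] \leq \Eb\big[e^{-S_k}\un_{\{\underline{S}_k \geq -h\}}\big].
\]
Under Assumption~\ref{Assumption1}, $S$ has positive drift $\mu:=-\psi'(1)>0$, with finite exponential moments under Assumption~\ref{Assumption3}; splitting on whether $S_k \geq \mu k/2$ or not,
\[
\Eb\big[e^{-S_k}\un_{\{\underline{S}_k\geq -h\}}\big] \leq e^{-\mu k/2} + e^h\,\Pb(S_k < \mu k/2) \leq C_h e^{-ck}
\]
for some $c>0$, by Cram\'er's theorem. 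Summing over $k \in (\ell_n,(\log n)^3]$ yields $O(e^{-c(\log n)^2})$, super-polynomially small and easily beating $n^{2r}$; a Markov inequality together with a Borel--Cantelli argument then upgrade this to the desired $\Pb$-almost-sure convergence.

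The main technical obstacle is the large deviation step above. It is essential to exploit the restriction $\underline{S}_k \geq -h$ provided by Lemma~\ref{RegLine}: the unrestricted moment $\Eb[e^{-S_k}] = e^{k\psi(2)}$ fails to decay when $\kappa \leq 2$ (since $\psi(2)\geq 0$ in that regime), but the cutoff removes the atypical contribution coming from small $S_k$, and the positive drift of $S$ then gives exponential decay in $k$.
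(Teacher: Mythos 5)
Your monotonicity reduction is exactly the paper's: since $p\mapsto N_x^{\tau^p}$ is non-decreasing, a vertex can only leave $\mathcal{B}^p_{(\log n)^2}$ if its edge count jumps from $1$ to at least $2$, i.e. if it is hit during two distinct excursions. After that point you take a genuinely different, more quantitative route. The paper stays soft: applying Fact~\ref{PropSetRoots} with $s_1=2$ and $r_2=r$, every $z$ with $|z|>(\log n)^2$ has an ancestor $u$ at a generation below $(\log n)^2$ whose oriented edge is crossed at most once up to $\tau^{n^r}$, and since any excursion reaching $z$ must cross $(u^*,u)$, no such $z$ can be visited during two excursions; no moment estimate is needed and the $\Pb$-a.s.\ form is inherited directly from Fact~\ref{PropSetRoots}. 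Your version (localisation to generations at most $(\log n)^3$ via Fact~\ref{PropSetRoots} with $s_1=3$, then the pair-of-excursions union bound with $q_x=e^{-V(x)}/H_x$ from Lemma~\ref{LemmaEdge}, many-to-one and Cram\'er) is a valid alternative, and the estimate $\Eb\big[e^{-S_k}\un_{\{\underline{S}_k\geq-h\}}\big]\leq C_he^{-ck}$ is correct under \ref{Assumption1} and \ref{Assumption3}.

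There is, however, a gap in the step that is supposed to produce the almost-sure statement. Lemma~\ref{RegLine} is an annealed bound whose error $e^{-\eta_0 h}/(1-e^{-|\psi(\eta_0)|})$ does not tend to $0$ with $n$ (only with $h$), so it is neither summable in $n$ nor amenable to a Borel--Cantelli upgrade; as written, your chain of bounds only gives convergence of the quenched probability in $\Pb$-probability (letting $n\to\infty$ first and then $h\to\infty$), not $\Pb$-almost surely. The repair is small but should be made explicit: you never use the $H_x\leq n^{r_1}$ half of $\mathcal{V}^h_{(\log n)^3,n^{r_1}}$, and the half you do use, $\underline{V}(x)\geq-h$, is a pure environment event; since $\min_{x\in\T}V(x)>-\infty$ $\Pb$-a.s.\ (the potential drifts to $+\infty$ under \ref{Assumption1}), you can apply Markov and Borel--Cantelli to the deterministic quantity $n^{2r}\sum_{(\log n)^2<|x|\leq(\log n)^3}e^{-2V(x)}\un_{\{\underline{V}(x)\geq-h\}}$ for each fixed integer $h$, and then choose $h=h(\mathcal{E})$ finite almost surely. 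Even simpler, keep $1/H_x\leq\min\big(1,e^{V(x)}\big)$ instead of $1/H_x\leq1$: then $q_x^2\leq e^{-V(x)-\max(V(x),0)}$, so by \eqref{ManyTo1} one gets $\Eb\big[\sum_{|x|=k}q_x^2\big]\leq\Eb\big[e^{-\max(S_k,0)}\big]\leq e^{-\mu k/2}+\Pb\big(S_k<\mu k/2\big)$, exponentially small in $k$ with no truncation at all — so, contrary to your closing remark, the cutoff from Lemma~\ref{RegLine} is not actually needed, and dropping it removes the obstruction to the $\Pb$-a.s.\ conclusion.
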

\noindent Note that, by definition, if $\mathcal{B}^p_{\ell}$ contains at least two vertices $x$ and $y$, then $x\leq y$ or $y\leq x$ if and only if $x=y$ so the sequence $(\mathcal{B}^{p}_{(\log n)^2})_{p\leq n^{r}}$ grows horizontally.
\begin{proof}
    Let $p\leq n^r$. If $\mathcal{B}^p_{(\log n)^2}$ is empty, then it is included in $\mathcal{B}^{p+1}_{(\log n)^2}$. Otherwise, let $z\in\mathcal{B}^{p}_{(\log n)^2}$ and introduce, for any $x\in\T$, $E^p_x:=\sum_{j=1}^p\un_{\{N_x^{\tau^j}-N_x^{\tau^{j-1}}\geq 1\}}$, the number of excursions above $e^*$ during which the vertex $x$ is visited up to $\tau^p$. On the event  $\{\forall\; |x|>(\log n)^2\textrm{ and }E^{n^r}_x\in\{0,1\}\}$, there exists $j\in\{1,\ldots,p\}$ such that $N_{z}^{\tau^j}-N_{z}^{\tau^{j-1}}=1$ and for all $j'\in\{1,\ldots,n^r\}\setminus\{j\}$, $N_{z}^{\tau^{j'}}-N_{z}^{\tau^{j'-1}}=0$. Then, $N_z^{\tau^{p+1}}=\sum_{j'=1;j'\not=j}^{p+1}(N_z^{\tau^{j'}}-N_z^{\tau^{j'-1}})+N_z^{\tau^j}-N_z^{\tau^{j-1}}=N_z^{\tau^j}-N_z^{\tau^{j-1}}=1$. Moreover, it is easy to see that for all $(\log n)^2\leq i<|z|$, $N_{z_i}^{\tau^{p+1}}\geq N_{z_i}^{\tau^{p}}\geq 2$. In other words, $z$ belongs to $\mathcal{B}^{p+1}_{(\log n)^2}$. It follows that
    \begin{align*}
        \P^{\mathcal{E}}\big((\mathcal{B}^{p}_{(\log n)^2})_{p\leq n^{r}}\textrm{ is non-decreasing}\big)\geq\P^{\mathcal{E}}\big(\forall\; |x|>(\log n)^2\textrm{ and }E^{n^r}_x\in\{0,1\}\big),
    \end{align*}
    and the latter probability goes to $1$ when $n\to\infty$ thanks to Fact \ref{PropSetRoots} with $r_2=r$ and $s_1=2$.
\end{proof}
\noindent From Fact \ref{PropSetRoots}, we also deduce the following important properties:
\begin{lemm}\label{PropSetRoots1} \phantom{à la ligne}
    \begin{enumerate}[label=(\roman*)]
        \item\label{Fact2.1} $\mathcal{B}^p_{\ell}$ is non-empty for $p$ and $\ell$ large enough: 
        for any $0<r_1<r_2$ and any $s>0$, $\Pb$-almost surely
        \begin{align*}
            \lim_{n\to\infty}\P^{\mathcal{E}}\big(\forall\; n^{r_1}\leq p\leq n^{r_2},\; \mathcal{B}^{p}_{(\log n)^{s}}\not=\varnothing\big)=1;
        \end{align*}
        \item\label{Fact2.2}  $\Pb$-almost surely
        \begin{align*}
             \lim_{n\to\infty}\P^{\mathcal{E}}\Big(\forall\; n^{r_1}\leq p\leq n^{r_2},\;\sup_{x\in\mathcal{B}^{p}_{(\log n)^2}}|x|\leq(\log n)^3\Big)=1.
        \end{align*}
    \end{enumerate}
\end{lemm}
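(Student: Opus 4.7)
The plan is to deduce both parts from Fact \ref{PropSetRoots}, which identifies, for any $z \in \T$ with $|z| > (\log n)^{s_1}$, an ancestor $u = z_{\mathfrak{t}^p_z}$ at a generation strictly between $(\log n)^s$ and $(\log n)^{s_1}$ satisfying $N_u^{\tau^p} \in \{0,1\}$, uniformly in $p \leq n^{r_2}$ with $\P^{\mathcal{E}}$-probability tending to $1$, $\Pb$-almost surely.

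I would tackle \textbf{part (ii)} first, since it follows solely from Fact \ref{PropSetRoots} by a simple contradiction. Suppose that, on the event of Fact \ref{PropSetRoots} applied with $s = 2$, $s_1 = 3$, and the given $r_2$, there exists $p \in [n^{r_1}, n^{r_2}]$ and $x \in \mathcal{B}^p_{(\log n)^2}$ with $|x| > (\log n)^3$. Taking $z = x$ in the Fact yields an ancestor $u = x_{\mathfrak{t}^p_x}$ with $(\log n)^2 < |u| < (\log n)^3 \leq |x|$ and $N_u^{\tau^p} \in \{0,1\}$. But $u$ is then a strict ancestor of $x$ at generation strictly between $(\log n)^2$ and $|x|$, and the defining property of $\mathcal{B}^p_{(\log n)^2}$ forces $N_u^{\tau^p} \geq 2$, a contradiction. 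This gives the bound $\sup_{x \in \mathcal{B}^p_{(\log n)^2}} |x| \leq (\log n)^3$ on an event of quenched probability tending to $1$.

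\textbf{Part (i)} needs one additional ingredient: showing that with high quenched probability, the range $\mathcal{R}_{\tau^p}$ contains at least one vertex of generation larger than $(\log n)^{s+1}$, uniformly for $p \in [n^{r_1}, n^{r_2}]$. By monotonicity of $\mathcal{R}_{\tau^p}$ in $p$, it suffices to treat $p = \lfloor n^{r_1} \rfloor$. Under the slow regime $\psi'(1) < 0$, standard BRW tail information (applied via the many-to-one identity \eqref{ManyTo1}) produces, $\Pb^*$-almost surely, a vertex $y \in \T$ with $|y| = \lceil (\log n)^{s+1} \rceil$ for which $H_y$ and $e^{V(y)}$ stay polylogarithmic in $n$. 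Lemma \ref{LemmaEdge} then gives the per-excursion hitting probability $\P^{\mathcal{E}}(T_y < \tau^1) = e^{-V(y)}/H_y \geq 1/\mathrm{poly}(\log n)$. Since the excursions above $e^*$ are i.i.d.\ under $\P^{\mathcal{E}}$ (Fact \ref{FactGWMulti}), the probability that $y$ is never visited by time $\tau^{\lfloor n^{r_1}\rfloor}$ is at most $\exp\bigl(-n^{r_1}/\mathrm{poly}(\log n)\bigr)$, which vanishes.

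Once such a $z \in \mathcal{R}_{\tau^p}$ with $|z| > (\log n)^{s+1}$ is produced, the conclusion follows by applying Fact \ref{PropSetRoots} with the prescribed $s$ and $s_1 = s+1$, taking this $z$: the ancestor $u := z_{\mathfrak{t}^p_z}$ satisfies $(\log n)^s < |u| < (\log n)^{s+1}$ and $N_u^{\tau^p} \in \{0,1\}$; since $z$ is visited, so is $u$, hence $N_u^{\tau^p} = 1$, and the minimality of $\mathfrak{t}^p_z$ ensures $N_{z_i}^{\tau^p} \geq 2$ for all $(\log n)^s < i < |u|$. Thus $u \in \mathcal{B}^p_{(\log n)^s}$, which is therefore non-empty. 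The main obstacle is the quantitative "range reaches high generation" step above: producing, for \emph{arbitrary} $s>0$, a sufficiently accessible vertex $y$ at generation $(\log n)^{s+1}$ forces one to invoke the full BRW tail behavior rather than a crude first moment, and the uniformity over $p \in [n^{r_1}, n^{r_2}]$ is then obtained for free by monotonicity.
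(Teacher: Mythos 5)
Your part (ii) is correct and is exactly the paper's argument: on the event of Fact \ref{PropSetRoots} with $s=2$, $s_1=3$, any $x\in\mathcal{B}^p_{(\log n)^2}$ with $|x|>(\log n)^3$ would have an ancestor at a generation in $((\log n)^2,(\log n)^3)$ visited at most once, contradicting the definition of $\mathcal{B}^p_{(\log n)^2}$. The final reduction in part (i) (a visited vertex $z$ beyond generation $(\log n)^{s_1}$ plus Fact \ref{PropSetRoots} produces $z_{\mathfrak{t}^p_z}\in\mathcal{B}^p_{(\log n)^s}$) also matches the paper.

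The gap is in your "range reaches high generation" step. In this regime there is no vertex $y$ at generation $\lceil(\log n)^{s+1}\rceil$ with $e^{V(y)}$ and $H_y$ polylogarithmic in $n$: since $\psi$ is convex with $\psi(1)=\psi(\kappa)=0$, we have $\psi(t)<0$ for $t\in(1,\kappa)$, and a first-moment bound $\P(\exists\,|x|=\ell:\,V(x)\leq a\ell)\leq e^{(ta+\psi(t))\ell}$ shows that, $\Pb^*$-a.s., $\min_{|x|=\ell}V(x)$ grows \emph{linearly} in $\ell$ (this is the same mechanism the paper exploits in Lemma \ref{RegLine} through a choice of $\eta_0$ with $\psi(\eta_0)<0$). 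Hence every vertex at generation $(\log n)^{s+1}$ has per-excursion hitting probability at most $e^{-V(y)}\leq\exp(-c(\log n)^{s+1})=n^{-c(\log n)^{s}}$, so $n^{r_1}$ i.i.d.\ excursions give an expected number of visits to any single prescribed $y$ tending to $0$; your bound $\exp(-n^{r_1}/\mathrm{poly}(\log n))$ is unobtainable. The walk does reach generation $\gg(\log n)^{s_1}$ within $n^{r_1}$ excursions, but collectively rather than through one accessible target, and this is what the paper uses: since $\tau^p\geq p\geq n^{r_1}$, it suffices that $\max_{j\leq n^{r_1}}|X_j|>(\log n)^{s_1}$ with quenched probability tending to one, which follows from the previously established fact (recalled in the introduction, from \cite{HuShi10}, \cite{Faraud}, \cite{AidRap}, \cite{deRaph1}) that $\max_{1\leq j\leq m}|X_j|$ is of polynomial order $m^{1-1/(\kappa\land 2)}$. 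Replacing your "accessible vertex" step by this citation (or by an argument on the per-excursion depth, whose tail is only polynomially small precisely because it is not governed by a single vertex) repairs part (i); as written, that step fails.
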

\begin{proof}
    \ref{Fact2.1} and \ref{Fact2.2} are consequences of Fact \ref{PropSetRoots}. Indeed, for any $p\geq n^{r_1}$, we have $\tau^p\geq\tau^{n^{r_1}}\geq n^{r_1}$ so 
    \begin{align}\label{Fact2.1.2}
    &\P^{\mathcal{E}}\big(\forall\; p\geq n^{r_1},\;\exists\; |u|>(\log n)^{s_1}:\; T_u<\tau^p\big)\geq \P^{\mathcal{E}}\big(\max_{j\leq n^{r_1}}|X_j|>(\log n)^{s_1}\big)\underset{n\to\infty}{\longrightarrow} 1.
\end{align}
In other words, any vertex of the tree $\T$, in a generation larger than $(\log n)^{s_1}$, is visited during the first $p\geq n^{r_1}$ excursions of $\X$ above $e^*$. Combining \eqref{Fact2.1.2} with Fact \ref{PropSetRoots}, we obtain \ref{Fact2.1} and finally, \eqref{Fact2.1.2} and \ref{PropSetRoots} with $s_1=3$ and $s=2$ yield \ref{Fact2.2} and the proof is completed.
\end{proof}
\noindent We know that $(\mathcal{B}^p_{(\log n)^2})_{p\leq n^r}$ is a non-decreasing sequence of non-empty sets (when $p$ is large enough), we now focus our attention to the cardinal $B^p_{\ell}:=\sum_{|x|>\ell}\un_{\{N_x^{\tau^p}=1,\; \min_{\ell<i<|x|}N_{x_i}^{\tau^p}\geq 2\}}$ of the set $\mathcal{B}^p_{\ell}$ and by convention, $B^0_{\ell}=0$. Let us first state a fact that will be useful in the following:
\begin{Fact}[Lemma 9 and Lemma 14 in \cite{deRaph1}]\label{CardSetRoot} 
For any $m\in\N^*$
\begin{enumerate}[label=(\roman*)]
    \item\label{CardSetRoot1}
        \begin{align*}
            \E[B_0^{m}]=m;
        \end{align*}
    \item\label{CardSetRoot2} for any $\xi\in(1,\kappa)$
        \begin{align*}
            \E[(B_0^{m})^{\xi}]\leq C_{\ref{CardSetRoot}}m^{\xi},
        \end{align*}
        for some constant $C_{\ref{CardSetRoot}}$ only depending on $\xi$.
\end{enumerate}

\end{Fact}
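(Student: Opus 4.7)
The strategy is to combine the multi-type Galton--Watson description of $(\mathcal{R}_{\tau^m},(N_x^{\tau^m}))$ from Fact \ref{FactGWMulti} with the first-moment identity of Lemma \ref{LemmaEdge} and the many-to-one identity \eqref{ManyTo1}; the base case $\E[B_0^1]=1$ already appears in Fact \ref{GWF_Hyp}, and the tail estimate \eqref{TailType1} supplies the moment information on $B_0^1$.

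For part~(i), I would introduce the level-indexed process
\[
M_\ell \;:=\; \sum_{|x|=\ell,\,\min_{0<i\leq\ell}N_{x_i}^{\tau^m}\geq 2}N_x^{\tau^m}\;+\;\sum_{|x|\leq \ell}\un_{\{x\in\mathcal{B}_0^m\}},
\]
with the convention that the first sum at $\ell=0$ equals $N_e^{\tau^m}=m$, so $M_0=m$. A generation-by-generation bookkeeping gives $M_{\ell+1}-M_\ell=\sum_{x}\bigl(\sum_{u\text{ child of }x}N_u^{\tau^m}-N_x^{\tau^m}\bigr)$, where the outer sum runs over vertices $x$ contributing to the first sum of $M_\ell$, since each child $u$ with $N_u^{\tau^m}\geq 2$ propagates into the first sum at level $\ell+1$ while each child with $N_u^{\tau^m}=1$ enters $\mathcal{B}_0^m$. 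By Fact \ref{FactGWMulti}, $\E^{\mathcal{E}}[\sum_u N_u^{\tau^m}\mid N_x^{\tau^m}]=N_x^{\tau^m}\,e^{V(x)}\sum_i e^{-V(x^i)}$, and integrating over the offspring environment using $\Eb[\sum_i e^{-A_{x^i}}]=e^{\psi(1)}=1$ shows that this conditional increment has zero annealed mean. Hence $(M_\ell)$ is a $(\mathcal{G}^{\tau^m}_\ell)$-martingale under $\E$ with $\E[M_\ell]=m$ for every $\ell$, and since $M_\ell\to B_0^m$ almost surely (the first sum eventually vanishes as $\ell$ surpasses the maximal generation reached by $\X$ up to $\tau^m$), a standard uniform-integrability argument delivers $\E[B_0^m]=m$.

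For part~(ii), I would exploit the additivity of the negative-multinomial joint law highlighted in Fact \ref{FactGWMulti}: conditionally on $\mathcal{E}$, $(N_x^{\tau^m})_x$ has the same distribution as $\sum_{j=1}^m(N_x^{(j)})_x$, where $(N_x^{(j)})_x$, $j=1,\dots,m$, are independent copies of $(N_x^{\tau^1})_x$. Under this coupling, for each $x\in\mathcal{B}_0^m$ there is a unique excursion index $j^\star$ with $N_x^{(j^\star)}=1$, and the highest ancestor $x^\dagger$ of $x$ such that $N_{x^\dagger}^{(j^\star)}=1$ belongs to $\mathcal{B}^{1}_{0,j^\star}$ (the analogue of $\mathcal{B}_0^1$ for the $j^\star$-th excursion). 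The map $x\mapsto(j^\star,x^\dagger)$ has multiplicity bounded by the length of a once-visited spine in excursion $j^\star$, which is controlled via Lemma \ref{PropSetRoots1}. Combining this with the tail estimate \eqref{TailType1}---which gives $\E[(B_0^1)^\xi]<\infty$ for all $\xi\in(1,\kappa)$---and Minkowski's inequality applied to the iid sum $\sum_{j=1}^m B^{1}_{0,j}$ yields the bound $\E[(B_0^m)^\xi]\leq C_{\ref{CardSetRoot}}\,m^\xi$. The main obstacle is this coupling step: the dependence of the ancestor-count condition on multiple excursions makes the reduction delicate, and one must carefully control the multiplicity of the map $x\mapsto x^\dagger$; an alternative, more computational route would be a direct $\xi$-th-moment calculation via a two-spine many-to-one decomposition along the paths to two distinct vertices in $\mathcal{B}_0^m$, which bypasses the coupling and reduces the problem to tail estimates on sums of $m$ iid variables with tail index $\kappa>\xi$.
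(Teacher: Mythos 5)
First, note that the paper itself offers no proof of this Fact: it is imported verbatim from \cite{deRaph1} (Lemmas 9 and 14), so your attempt is measured against that external proof rather than anything in the present text. Your part (i) sets up a correct telescoping identity: using the mixed branching property \eqref{BP}, Lemma \ref{LemmaEdge} and $\psi(1)=0$, the process $M_\ell$ is indeed a mean-$m$ martingale, and $M_\ell\to B_0^m$ a.s. since only finitely many generations are visited before $\tau^m<\infty$. The gap is the final sentence: the ``standard uniform-integrability argument'' is not standard here --- it is the whole content of the lemma. Indeed the unrestricted level mass $\sum_{|x|=\ell}N_x^{\tau^m}$ has expectation $m$ for every $\ell$ while tending to $0$ a.s., so that family is \emph{not} uniformly integrable; consequently, uniform integrability of $(M_\ell)_\ell$ is equivalent to proving $\E\big[\sum_{|x|=\ell}N_x^{\tau^m}\un_{\{\min_{0<i\leq\ell}N_{x_i}^{\tau^m}\geq 2\}}\big]\to 0$, which is exactly the identity $\E[B_0^m]=m$ in disguise (Fatou only yields $\E[B_0^m]\leq m$). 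Your argument nowhere uses the restriction to doubly-visited ancestors, yet that restriction is what must produce the decay; a genuine estimate (e.g.\ a spinal/change-of-measure computation as in \cite{deRaph1}) is missing at precisely the decisive step.

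For part (ii), the decomposition of $(N_x^{\tau^m})_x$ into $m$ i.i.d.\ excursion local-time fields is correct, but the proposed reduction of $B_0^m$ to the i.i.d.\ sum $\sum_{j=1}^m B^1_{0,j}$ does not follow from the map $x\mapsto(j^\star,x^\dagger)$. Fix $(j,y)$ with $y$ in the excursion-$j$ analogue of $\mathcal{B}^1_0$: the fibre over $(j,y)$ consists of all $x\geq y$ with $N_x^{\tau^m}=1$, all strict ancestors of $x$ carrying total local time at least $2$, and $x$ reached during excursion $j$. Since edges strictly below $y$ may be crossed many times within excursion $j$ even though $N_y^{(j)}=1$, and since other excursions can thicken the path from $y$ to $x$, this fibre is an antichain whose cardinality is itself a $B$-type random variable; it is not bounded by ``the length of a once-visited spine''. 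Moreover Lemma \ref{PropSetRoots1} is a high-probability statement about the walk up to $\tau^{n^{r}}$ as $n\to\infty$, so it provides no deterministic or moment bound on this multiplicity for fixed $m$ and cannot be inserted into a $\xi$-th moment estimate. The fallback ``two-spine many-to-one'' computation is named but not carried out. So the key combinatorial step of (ii) is unjustified (and false as stated), and both parts remain at the level of a plan; if you do not want to reconstruct de Raphélis' arguments, the honest option is to do what the paper does and cite \cite{deRaph1} Lemmas 9 and 14.
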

\noindent The next lemma gives an uniform estimation of $B^p_{\ell}$ when $p$ and $\ell$ are large enough:
\begin{lemm}\label{RootNumber} Recall that $W_{\infty}$ is the limit of the additive martingale $(\sum_{|x|=\ell}e^{-V(x)})_{\ell}$ and $W_{\infty}$ is positive $\Pb^*$-almost surely. For any $M,r,\varepsilon_1>0$
\begin{align*}
\lim_{n\to\infty} \P^*\Big(\sup_{\alpha\in[0,M]}\Big|\frac{1}{n^r}B^{\lfloor\alpha n^r\rfloor}_{(\log n)^2}-\alpha W_{\infty}\Big|>\varepsilon_1\Big)=0.
\end{align*}
\end{lemm}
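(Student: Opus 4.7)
The plan is to use the \emph{mixed branching property} \eqref{BP} at the cut-off generation $\ell_n := (\log n)^2$ to decompose $B^p_{\ell_n}$ into a sum of conditionally independent contributions, and then reduce the statement to a law of large numbers for the total ``flow'' through this generation.

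\textbf{Decomposition via \eqref{BP}.} Every vertex $x$ contributing to $B^p_{\ell_n}$ has $|x| > \ell_n$, so it descends from some vertex $u$ with $|u| = \ell_n$ and $N_u^{\tau^p} \geq 1$. By \eqref{BP}, conditionally on $\mathcal{G}^{\tau^p}_{\ell_n}$, the sub-process $(N_x^{\tau^p};\, x > u)$ is distributed as an independent copy starting from a tree with initial type $N_u^{\tau^p}$. Hence
\begin{equation*}
B^p_{\ell_n} \;=\; \sum_{|u| = \ell_n,\; N_u^{\tau^p} \geq 1} B_0^{(u),\, N_u^{\tau^p}},
\end{equation*}
where the $B_0^{(u),\,\cdot}$ are, conditionally on $\mathcal{G}^{\tau^p}_{\ell_n}$, independent with the same law as $B_0^{\cdot}$ under $\P$. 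By Fact \ref{CardSetRoot}\ref{CardSetRoot1}, this gives the conditional mean $\E[B^p_{\ell_n} \mid \mathcal{G}^{\tau^p}_{\ell_n}] = \sum_{|u| = \ell_n} N_u^{\tau^p}$.

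\textbf{Law of large numbers for the total flow.} Summing over excursions, $\sum_{|u|=\ell_n} N_u^{\tau^p} = \sum_{j=1}^p Y_j$ with $Y_j := \sum_{|u| = \ell_n}(N_u^{\tau^j} - N_u^{\tau^{j-1}})$ i.i.d.\ under $\P^{\mathcal{E}}$, of conditional mean $\sum_{|u| = \ell_n} e^{-V(u)}$ by Lemma \ref{LemmaEdge}. Applying Doob's maximal inequality to the martingale $\sum_{j \leq p}(Y_j - \E^{\mathcal{E}} Y_1)$, together with the variance estimates in Lemma \ref{LemmaEdge} and the vertex-selection Lemma \ref{RegLine} (choose $r_1 > r/\kappa$ to control $H_u \leq n^{r_1}$ and $\underline{V}(u) \geq -h$), yields
\begin{equation*}
\sup_{p \leq M n^r} \frac{1}{n^r}\Big|\sum_{|u|=\ell_n} N_u^{\tau^p} - p \sum_{|u|=\ell_n} e^{-V(u)}\Big| \xrightarrow[n\to\infty]{\P^*} 0.
\end{equation*}
Combining with the a.s.\ convergence $\sum_{|u|=\ell_n} e^{-V(u)} \to W_\infty$ (additive martingale, $\Pb^*$-a.s., since $\ell_n \to \infty$) gives
\begin{equation*}
\sup_{\alpha \in [0, M]} \Big|\tfrac{1}{n^r}\sum_{|u|=\ell_n} N_u^{\tau^{\lfloor \alpha n^r \rfloor}} - \alpha W_\infty\Big| \xrightarrow[n\to\infty]{\P^*} 0.
\end{equation*}

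\textbf{Concentration around the conditional mean.} Fix $\xi \in (1, \kappa \wedge 2)$. Using Fact \ref{CardSetRoot}\ref{CardSetRoot2} and a Bahr--Esseen type inequality for sums of independent centered random variables,
\begin{equation*}
\E\Big[\Big|B^p_{\ell_n} - \sum_{|u|=\ell_n} N_u^{\tau^p}\Big|^\xi \,\Big|\, \mathcal{G}^{\tau^p}_{\ell_n}\Big] \;\leq\; C \sum_{|u|=\ell_n} (N_u^{\tau^p})^\xi.
\end{equation*}
On the high-probability event of Lemma \ref{RegLine}, the contribution of each $u$ is controlled by Lemma \ref{LemmaEdge} (which gives $\E^{\mathcal{E}}[(N_u^{\tau^p})^\xi]$-bounds through $p$, $e^{-V(u)}$ and $H_u \leq n^{r_1}$); averaging under $\P^*$ via the many-to-one Lemma \eqref{ManyTo1} then shows that the left-hand side is $o(n^{r\xi})$ in probability, so that
\begin{equation*}
\sup_{p \leq M n^r} \tfrac{1}{n^r}\big|B^p_{\ell_n} - \sum_{|u|=\ell_n} N_u^{\tau^p}\big| \xrightarrow[n\to\infty]{\P^*} 0.
\end{equation*}

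\textbf{Uniformity in $\alpha$.} The two previous displays yield pointwise convergence of $\tfrac{1}{n^r} B^{\lfloor \alpha n^r \rfloor}_{\ell_n}$ to $\alpha W_\infty$ for each $\alpha \in [0, M]$. Uniformity follows by a standard sandwich argument: Lemma \ref{NDSet} guarantees that, with $\P^*$-probability tending to one, $p \mapsto B^p_{\ell_n}$ is non-decreasing; since the limit $\alpha \mapsto \alpha W_\infty$ is continuous and non-decreasing on $[0, M]$, comparison with a sufficiently fine finite grid transfers pointwise convergence into uniform convergence.

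\textbf{Main obstacle.} The delicate point is the concentration step when $\kappa$ is close to $1$: only moments of order $\xi$ slightly above $1$ are available, and $\sum_{|u|=\ell_n}(N_u^{\tau^p})^\xi$ can be driven by a few vertices with exceptionally large local time. The conjunction of the selection Lemma \ref{RegLine} (to truncate the relevant regime of $V$ and $H$) and the moment estimate of Fact \ref{CardSetRoot}\ref{CardSetRoot2} is essential to absorb these fluctuations at scale $n^r$.
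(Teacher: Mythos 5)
Your proposal is correct in outline and shares the paper's skeleton: cut at generation $\ell_n=(\log n)^2$, use the mixed branching property \eqref{BP} and Fact \ref{CardSetRoot} \ref{CardSetRoot1} to identify $\sum_{|u|=\ell_n}N_u^{\tau^p}$ as the conditional mean of $B^p_{\ell_n}$, prove a law of large numbers for this flow (second moment restricted to the regular line of Lemma \ref{RegLine}, quenched mean $p\sum_{|u|=\ell_n}e^{-V(u)}\to \alpha W_\infty$), and upgrade to uniformity in $\alpha$ via the monotonicity of Lemma \ref{NDSet} plus a Dini/grid argument. Where you genuinely diverge is the concentration of $B^p_{\ell_n}$ around $\sum_u N_u^{\tau^p}$: you apply a von Bahr--Esseen inequality of order $\xi\in(1,\kappa\wedge2)$ to the conditionally centered variables $\Tilde{B}^p_u-N_u^{\tau^p}$ and invoke Fact \ref{CardSetRoot} \ref{CardSetRoot2}, whereas the paper truncates each $\Tilde{B}^p_u$ to the regular set $\mathcal{V}^{h,u}_{2(\log n)^2,\lambda_n}$ and runs a longer $L^1/L^2$ split with explicit second-moment computations ($\Sigma_n$, $\Tilde{\Sigma}_n$, cross terms). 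Your route is lighter, but the step you state as ``averaging via \eqref{ManyTo1} shows the left-hand side is $o(n^{r\xi})$'' is exactly where the work hides: Lemma \ref{LemmaEdge} gives only first and second moments of $N_u^{\tau^1}$, so you must interpolate, e.g.\ $\E^{\mathcal{E}}[(N_u^{\tau^1})^{\xi}]\leq e^{-V(u)}(2H_u-1)^{\xi-1}$ by H\"older, decompose $N_u^{\tau^{p_n}}$ over excursions, and then check that $p_n n^{r_1(\xi-1)}\Eb[\sum_{|u|=\ell_n}e^{-V(u)}]+p_n^{\xi}e^{\ell_n\psi(\xi)}=o(n^{r\xi})$, which forces the choice $r_1\in(r/\kappa,r)$ (the paper's $\lambda_n=n^{r(1+1/\kappa)/2}$ is such a choice) and uses $\psi(\xi)<0$ for $\xi\in(1,\kappa)$; moreover the regular-line event of Lemma \ref{RegLine} has probability only $1-O(e^{-\eta_0 h})$, so, as in the paper, a final passage $h\to\infty$ after $\limsup_n$ is needed, with the truncation events kept $\mathcal{G}^{\tau^p}_{\ell_n}$-measurable so as not to spoil the conditional centering. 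With these details supplied, your argument closes and is arguably shorter than the paper's Step 1.
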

\noindent Note that if we would consider $T^p_e$ (the $p$-th return time to the root $e$ of the tree $\T$) instead of $\tau^p$ in the definition of $B^p_{\ell}$, then we would obtain a similar result but $W_{\infty}$ would be replaced by $p^{\mathcal{E}}(e,e^*)W_{\infty}$.

\begin{proof}
For any vertex $u\in\T$, introduce
\begin{align}\label{DefSetRootTransla}
    \Tilde{\mathcal{B}}^{p}_u:=\big\{x\in\T,\; x>u;\; N_x^{\tau^p}=1\textrm{ and }\min_{|u|<i<|x|}N_{x_i}^{\tau^p}\geq 2\big\},
\end{align}
and we denote by $\Tilde{B}^p_u$ the cardinal of $\Tilde{\mathcal{B}}^{p}_u$: $\Tilde{B}^p_u=\sum_{x>u}\un_{\{N_x^{\tau^p}=1,\; \min_{|u|<i<|x|}N_{x_i}^{\tau^p}\geq 2\}}$. Let $x\in\T$ and $u<x$. Define the translated potential by $V_u(x):=V(x)-V(u)$,\; $\underline{V}_u(x):=\min_{u<z\leq x}V_u(z)$ and the translated exponential potential downfall by $H_{u,x}:=\sum_{u\leq z\leq x}e^{V_u(z)-V_u(x)}$. Also introduce the translated $(\ell,\lambda)$-regular line $\mathcal{O}^u_{\ell,\lambda}$ by $\mathcal{O}^u_{\ell,\lambda}:=\{x\in\T,\; x>u,\; |x|\leq\ell;\; \max_{u<z\leq x}H_z\leq\lambda\}$ and the translated version of $\mathcal{V}^{h}_{\ell,\lambda}$ by $\mathcal{V}^{h,u}_{\ell,\lambda}:=\{x\in\mathcal{O}^{u}_{\ell,\lambda};\; \underline{V}_u(x)\geq-h\}$. The main reason why we introduce these sets of vertices is that it somehow allows to have a good control of $\E[(B^m_0)^2]$, even when $\kappa$ is close to $1$. \\
We split the proof into three main steps. In the first step, we show that for any $\alpha, h, r>0$
\begin{align}\label{EspPart1}
    \lim_{h\to+\infty}\;\limsup_{n\to\infty}\frac{1}{n^r}\E\Big[\Big|\sum_{|u|=(\log n)^2}\un_{\{u\in\mathcal{V}^h_{(\log n)^2,\lambda_n}\}}\un_{\{N_u^{\tau^{p_n}}\leq hp_n\}}(\Tilde{B}^{p_n}_u-N_u^{\tau^{p_n}})\Big|\Big]=0,
\end{align}
with $p_n:=\lfloor\alpha n^r\rfloor$ and $\lambda_n=n^{r(1+1/\kappa)/2}$. \\
Indeed, for any $h>0$, $\E[|\sum_{|u|=(\log n)^2}\un_{\{u\in\mathcal{V}^h_{(\log n)^2,\lambda_n}\}}\un_{\{N_u^{\tau^{p_n}}\leq hp_n\}}(\Tilde{B}^{p_n}_u-N_u^{\tau^{p_n}})|]$ is smaller than
\begin{align}\label{DecoupeEsp}
    &\E\Big[\sum_{|u|=(\log n)^2}\big(\Tilde{B}^{p_n}_{u}-\Tilde{B}^{p_n}_{u,\lambda_n}\big)\un_{\{u\in\mathcal{V}^h_{(\log n)^2,\lambda_n}\}}\un_{\{N_u^{\tau^{p_n}}\leq hp_n\}}\Big]\nonumber \\ & +\E\Big[\Big|\sum_{|u|=(\log n)^2}\big(\Tilde{B}^{p_n}_{u,\lambda_n}-N_u^{\tau^{p_n}}\big)\un_{\{u\in\mathcal{V}^h_{(\log n)^2,\lambda_n}\}}\un_{\{N_u^{\tau^{p_n}}\leq hp_n\}}\Big|^2\Big]^{1/2},
\end{align}
where $\Tilde{B}^{p_n}_{u,\lambda_n}:=\sum_{x\in\mathcal{V}^{h,u}_{2(\log n)^2,\lambda_n}}\un_{\{N_x^{\tau^{p_n}}=1,\; \min_{|u|<i<|x|}N_{x_i}^{\tau^{p_n}}\geq 2\}}$. \\
We first deal with $\E[\sum_{|u|=(\log n)^2}\big(\Tilde{B}^{p_n}_{u}-\Tilde{B}^{p_n}_{u,\lambda_n}\big)\un_{\{u\in\mathcal{V}^h_{(\log n)^2,\lambda_n}\}}\un_{\{N_u^{\tau^{p_n}}\leq hp_n\}}]$. By Fact \ref{CardSetRoot} \ref{CardSetRoot1}, for any $m\in\N^*$, we have $m=\E[\sum_{|x|\geq 1}\un_{\{N_x^{\tau^m}=1,\; \min_{1\leq i<|x|}N_{x_i}^{\tau^m}\geq 2\}}]$ and thanks to the mixed branching property \eqref{BP}
\begin{align*}
    \E\Big[\sum_{|u|=(\log n)^2}\big(\Tilde{B}^{p_n}_{u}-\Tilde{B}^{p_n}_{u,\lambda_n}\big)\un_{\{u\in\mathcal{V}^h_{(\log n)^2,\lambda_n}\}}\un_{\{N_u^{\tau^{p_n}}\leq hp_n\}}\Big]=\E\Big[\sum_{|u|=(\log n)^2}\Bar{\Psi}^h_n(N_u^{\tau^{p_n}})\un_{\{u\in\mathcal{V}^h_{(\log n)^2,\lambda_n}\}}\Big],
\end{align*}
where $\Bar{\Psi}^h_n(m):=(m-\Psi^h_n(m))\un_{\{m\leq hp_n\}}$ and $\Psi^h_n(m):=\E[\sum_{x\in\mathcal{V}^h_{(\log n)^2,\lambda_n},|x|\geq 1}\un_{\{N_x^{\tau^{m}}=1,\; \min_{1\leq i<|x|}N_{x_i}^{\tau^{m}}\geq 2\}}]$. Note that thanks to the Hölder inequality, for any $\zeta\in(0,1-1/\kappa)$
\begin{align*}
    \Bar{\Psi}^h_n(m)&\leq\E\Big[\sum_{|x|\geq 1}\un_{\{N_x^{\tau^{m}}=1,\; \min_{1\leq i<|x|}N_{x_i}^{\tau^{m}}\geq 2\}}\un_{\{\mathcal{R}_{\tau^m,(\log n)^2}\not\subset\mathcal{V}^{h}_{(\log n)^2,\lambda_n}\}}\Big]\un_{\{m\leq hp_n\}} \\ & \leq \E\Big[\Big(\sum_{|x|\geq 1}\un_{\{N_x^{\tau^{m}}=1,\; \min_{1\leq i<|x|}N_{x_i}^{\tau^{m}}\geq 2\}}\Big)^{\zeta'}\Big]^{1/\zeta'}\P\big(\mathcal{R}_{\tau^m, (\log n)^2}\not\subset\mathcal{V}^{h}_{(\log n)^2,\lambda_n}\big)^{\zeta}\un_{\{m\leq hp_n\}},
\end{align*}
with $\zeta'=1/(1-\zeta)$. By Fact \ref{CardSetRoot} \ref{CardSetRoot2}, $\E[(\sum_{|x|\geq 1}\un_{\{N_x^{\tau^{m}}=1,\; \min_{1\leq i<|x|}N_{x_i}^{\tau^{m
}}\geq 2\}})^{\zeta'}]\leq(C_{\ref{CardSetRoot}}m)^{\zeta'}$ thus giving
\begin{align*}
    0\leq\Bar{\Psi}^h_n(m)\leq C_{\ref{CardSetRoot}}m\P\big(\mathcal{R}_{\tau^m, (\log n)^2}\not\subset\mathcal{V}^{h}_{(\log n)^2,\lambda_n}\big)^{\zeta}\un_{\{m\leq hp_n\}}\leq C_{\ref{CardSetRoot}}m\P\big(\mathcal{R}_{\tau^{hp_n}, (\log n)^2}\not\subset\mathcal{V}^{h}_{(\log n)^2,\lambda_n}\big)^{\zeta},
\end{align*}
where we have used, for the second inequality, that $m\leq hp_n$ implies $\mathcal{R}_{\tau^{m}}\subset\mathcal{R}_{\tau^{hp_n}}$. This yields
\begin{align*}
    \E\Big[\sum_{|u|=(\log n)^2}\big(\Tilde{B}^{p_n}_{u}-\Tilde{B}^{p_n}_{u,\lambda_n}\big)\un_{\{u\in\mathcal{V}^h_{(\log n)^2,\lambda_n}\}}\un_{\{N_u^{\tau^{p_n}}\leq hp_n\}}\Big]\leq C_{\ref{CardSetRoot}}&\P\big(\mathcal{R}_{\tau{hp_n}, (\log n)^2}\not\subset\mathcal{V}^{h}_{(\log n)^2,\lambda_n}\big)^{\zeta} \\ & \times\E\Big[\sum_{|u|=(\log n)^2}N_u^{\tau^{p_n}}\un_{\{u\in\mathcal{V}^h_{(\log n)^2,\lambda_n}\}}\Big].
\end{align*}
Note that $\E[\sum_{|u|=(\log n)^2}N_u^{\tau{p_n}}\un_{\{u\in\mathcal{V}^h_{(\log n)^2,\lambda_n}\}}]\leq p_n\Eb[\sum_{|u|=(\log n)^2}e^{-V(u)}]=p_n$, so Lemma \ref{RegLine} leads to
\begin{align}\label{EspPart11}
    &\limsup_{n\to\infty}\frac{1}{n^r}\E\Big[\sum_{|u|=(\log n)^2}\big(\Tilde{B}^{p_n}_{u}-\Tilde{B}^{p_n}_{u,\lambda_n}\big)\un_{\{u\in\mathcal{V}^h_{(\log n)^2,\lambda_n}\}}\un_{\{N_u^{\tau^{p_n}}\leq hp_n\}}\Big]\leq C_{\ref{RootNumber},1}e^{-\eta_0h\zeta},
\end{align}
for some constant $C_{\ref{RootNumber},1}>0$. \\
We now deal with $\E[|\sum_{|u|=(\log n)^2}(\Tilde{B}^{p_n}_{u,\lambda_n}-N_u^{\tau{p_n}})\un_{\{u\in\mathcal{V}^h_{(\log n)^2,\lambda_n}\}}\un_{\{N_u^{\tau^{p_n}}\leq hp_n\}}|^2]$. Note that this mean is equal to
\begin{align*}
    &\E\Big[\sum_{|u|=|v|=(\log n)^2}\Tilde{B}^{p_n}_{u,\lambda_n}\Tilde{B}^{p_n}_{v,\lambda_n}\un_{\{u,v\in\mathcal{V}^h_{(\log n)^2,\lambda_n}\}}\un_{\{N_u^{\tau^{p_n}}\leq hp_n,\; N_v^{\tau^{p_n}}\leq hp_n\}}\Big] \\ & +\E\Big[\sum_{|u|=|v|=(\log n)^2}\big(N_u^{\tau{p_n}}-2\Tilde{B}^{p_n}_{u,\lambda_n}\big)N_v^{\tau{p_n}}\un_{\{u,v\in\mathcal{V}^h_{(\log n)^2,\lambda_n}\}}\un_{\{N_u^{\tau^{p_n}}\leq hp_n,\; N_v^{\tau^{p_n}}\leq hp_n\}}\Big].
\end{align*}
One can notice that $\E[\sum_{|u|=|v|=(\log n)^2}\Tilde{B}^{p_n}_{u,\lambda_n}\Tilde{B}^{p_n}_{v,\lambda_n}\un_{\{u,v\in\mathcal{V}^h_{(\log n)^2,\lambda_n}\}}\un_{\{N_u^{\tau^{p_n}}\leq hp_n,\; N_v^{\tau^{p_n}}\leq hp_n\}}]$ is smaller than
\begin{align*}
    \E\Big[\sum_{|u|=(\log n)^2}(\Tilde{B}^{p_n}_{u,\lambda_n})^2\un_{\{u\in\mathcal{V}^h_{(\log n)^2,\lambda_n}\}}\Big]+\E\Big[\underset{|u|=|v|=(\log n)^2}{\sum_{u\not=v}}\Tilde{B}^{p_n}_{u,\lambda_n}\Tilde{B}^{p_n}_{v,\lambda_n}\un_{\{u,v\in\mathcal{V}^h_{(\log n)^2,\lambda_n}\}}\Big],
\end{align*}
and thanks to the mixed branching property \eqref{BP}, we have 
\begin{align*}
    &\E\Big[\underset{|u|=|v|=(\log n)^2}{\sum_{u\not=v}}\Tilde{B}^{p_n}_{u,\lambda_n}\Tilde{B}^{p_n}_{v,\lambda_n}\un_{\{u,v\in\mathcal{V}^h_{(\log n)^2,\lambda_n}\}}\Big]\leq \E\Big[\underset{|u|=|v|=(\log n)^2}{\sum_{u\not=v}}N_u^{\tau^{p_n}}N_v^{\tau^{p_n}}\un_{\{u,v\in\mathcal{V}^h_{(\log n)^2,\lambda_n}\}}\Big].
\end{align*}
This ensures that $\E[|\sum_{|u|=(\log n)^2}(\Tilde{B}^{p_n}_{u,\lambda_n}-N_u^{\tau{p_n}})\un_{\{u\in\mathcal{V}^h_{(\log n)^2,\lambda_n}\}}\un_{\{N_u^{\tau^{p_n}}\leq hp_n\}}|^2]$ is smaller than
\begin{align}\label{MoyenneCarre}
    &\E\Big[\sum_{|u|=(\log n)^2}(\Tilde{B}^{p_n}_{u,\lambda_n})^2\un_{\{u\in\mathcal{V}^h_{(\log n)^2,\lambda_n}\}}\Big]\nonumber \\ & +2\E\Big[\sum_{|u|=|v|=(\log n)^2}\big(N_u^{\tau{p_n}}-\Tilde{B}^{p_n}_{u,\lambda_n}\big)N_v^{\tau{p_n}}\un_{\{u,v\in\mathcal{V}^h_{(\log n)^2,\lambda_n}\}}\un_{\{N_u^{\tau^{p_n}}\leq hp_n\}}\Big].
\end{align}
We first prove that $\E[\sum_{|u|=(\log n)^2}(\Tilde{B}^{p_n}_{u,\lambda_n})^2\un_{\{u\in\mathcal{V}^h_{(\log n)^2,\lambda_n}\}}]=o(n^{2r})$. For that, we decompose $(\Tilde{B}^{p_n}_{u,\lambda_n})^2$ as follows
\begin{align*}
    (\Tilde{B}^{p_n}_{u,\lambda_n})^2=\Big(\sum_{x\in\mathcal{V}^{h,u}_{2(\log n)^2,\lambda_n}}\un_{\{N_x^{\tau^{p_n}}=1,\; \min_{|u|\leq i<|x|}N_{x_i}^{\tau^{p_n}}\geq 2\}}\Big)^2=\Tilde{B}^{p_n}_{u,\lambda_n}+\sum_{x\not=y\in\mathcal{V}^{h,u}_{2(\log n)^2,\lambda_n}}\un_{\{x,y\in\Tilde{\mathcal{B}}^{p_n}_u\}},
\end{align*}
where we recall the definition of $\Tilde{\mathcal{B}}^{p_n}_u$ in \eqref{DefSetRootTransla}. Thanks to the mixed branching property \eqref{BP} and Fact \ref{CardSetRoot} \ref{CardSetRoot1}, we have $\E[\sum_{|u|=(\log n)^2}\Tilde{B}^{p_n}_{u,\lambda_n}]\leq\E[\sum_{|u|=(\log n)^2}N_u^{\tau^{p_n}}]=p_n\Eb[\sum_{|u|=(\log n)^2}e^{-V(u)}]=p_n=o(n^{2r})$. For the second part of the decomposition, note that, for any $x\not=y$ in $\mathcal{V}^{h,u}_{2(\log n)^2,\lambda_n}$, $u$ is a common ancestor of $x$ and $y$ so $u\leq x\land y$ and if $u<x\land y$, then $x\land y$ also belongs to $\mathcal{V}^{h,u}_{2(\log n)^2,\lambda_n}$. Moreover, by definition, if $x\in \Tilde{\mathcal{B}}^{p_n}_u$, then for any ancestor $v$ of $x$ such that $u\leq v<x$, we have $x\in\Tilde{\mathcal{B}}^{p_n}_v$. It follows
\begin{align*}
    &\sum_{|u|=(\log n)^2}\un_{\{u\in\mathcal{V}^{h}_{(\log n)^2,\lambda_n}\}}\sum_{x\not=y\in\mathcal{V}^{h,u}_{2(\log n)^2,\lambda_n}}\un_{\{x,y\in\Tilde{\mathcal{B}}^{p_n}_u,\; x\land y>u\}} \\ & \leq\sum_{|u|=(\log n)^2}\un_{\{u\in\mathcal{V}^{h}_{(\log n)^2,\lambda_n}\}}\sum_{k=(\log n)^2+1}^{2(\log n)^2-2}\;\underset{z>u}{\sum_{|z|=k}}\un_{\{z\in\mathcal{V}^{h,u}_{2(\log n)^2,\lambda_n}\}}\underset{v^*=w^*=z}{\sum_{v\not=w}}\Tilde{B}^{p_n}_v\Tilde{B}^{p_n}_w \\ & + \sum_{|u|=(\log n)^2}\un_{\{u\in\mathcal{V}^{h}_{(\log n)^2,\lambda_n}\}}\underset{z>u}{\sum_{|z|=2(\log n)^2-1}}\un_{\{z\in\mathcal{V}^{h,u}_{2(\log n)^2,\lambda_n}\}}\underset{x^*=y^*=z}{\sum_{x\not=y}}\un_{\{x,y\in\Tilde{\mathcal{B}}^{p_n}_z\}} \\ & + \sum_{|u|=(\log n)^2}\un_{\{u\in\mathcal{V}^{h}_{(\log n)^2,\lambda_n}\}}\sum_{x\not=y\in\T;\; x\land y=u}\un_{\{x,y\in\Tilde{\mathcal{B}}^{p_n}_u\}}.
\end{align*}
Note that we had to distinguish the case $|x\land y|<2(\log n)^2-1$ from the case $x^*=y^*$ when $x\land y>u$. The mixed branching property \eqref{BP}, Fact \ref{CardSetRoot} \ref{CardSetRoot1} and the definition of $\Tilde{\mathcal{B}}^{p_n}_{\cdot}$ yield
\begin{align*}
    &\E\Big[\sum_{|u|=(\log n)^2}\un_{\{u\in\mathcal{V}^{h}_{(\log n)^2,\lambda_n}\}}\sum_{x\not=y\in\mathcal{V}^{h,u}_{(\log n)^2,\lambda_n}}\un_{\{x,y\in\Tilde{\mathcal{B}}^{p_n}_u\}}\Big]\leq \Sigma_n+\Tilde{\Sigma}_n,
\end{align*}    
where
\begin{align*}
    \Sigma_n:=\E\Big[\sum_{|u|=(\log n)^2}\un_{\{u\in\mathcal{V}^{h}_{(\log n)^2,\lambda_n}\}}\sum_{k=(\log n)^2+1}^{2(\log n)^2-2}\;\underset{z>u}{\sum_{|z|=k}}\un_{\{z\in\mathcal{V}^{h,u}_{2(\log n)^2,\lambda_n}\}}\underset{v^*=w^*=z}{\sum_{v\not=w}}N_v^{T^{p_n}}N_w^{T^{p_n}}\Big],
\end{align*}
and
\begin{align*}
    \Tilde{\Sigma}_n:=\E\Big[\sum_{|u|=(\log n)^2}\un_{\{u\in\mathcal{V}^{h}_{(\log n)^2,\lambda_n}\}}\underset{z>u}{\sum_{|z|=2(\log n)^2-1}}\un_{\{z\in\mathcal{V}^{h,u}_{2(\log n)^2,\lambda_n}\}}\varphi_1\big(N_z^{\tau^{p_n}}\big)\Big],
\end{align*}
with $\varphi_1(m):=\E[\sum_{x\not=y;\; |x|=|y|=1}\un_{\{N_x^{\tau^m}=N_y^{\tau^m}=1\}}]$. \\ 
We first deal with $\Sigma_n$. We decompose $N_v^{\tau^{p_n}}N_w^{\tau^{p_n}}$ whether or not $v$ and $w$ are visited during the same excursion above $e^*$: $N_v^{\tau^{p_n}}N_w^{\tau^{p_n}}=\sum_{j=1}^{p_n}(N_v^{\tau^{j}}-N_v^{\tau^{j-1}})(N_w^{\tau^{j}}-N_w^{\tau^{j-1}})+\sum_{i\not=j=1}^{p_n}(N_v^{\tau^{j}}-N_v^{\tau^{j-1}})(N_w^{\tau^{i}}-N_w^{\tau^{i-1}})$ and thanks to the strong Markov property $\E^{\mathcal{E}}[N_v^{\tau^{p_n}}N_w^{\tau^{p_n}}]=2p_n\E^{\mathcal{E}}[N_v^{\tau^1}N_w^{\tau^1}]+p_n(p_n-1)\E^{\mathcal{E}}[N_v^{\tau^1}]\E^{\mathcal{E}}[N_w^{\tau^1}]$, using Lemma \ref{LemmaEdge} yields, $\E^{\mathcal{E}}[N_v^{\tau^{p_n}}N_w^{\tau^{p_n}}]\leq 2p_nH_ze^{-V(z)}e^{-V_z(v)-V_z(w)}+p_n^2e^{-2V(z)}e^{-V_z(v)-V_z(w)}$. It follows that $\Sigma_n$ is smaller than $\Sigma_{n,1}+\Sigma_{n,2}$ where
\begin{align*}
    \Sigma_{n,1}:=2c_1p_n\Eb\Big[\sum_{|u|=(\log n)^2}\un_{\{u\in\mathcal{V}^{h}_{(\log n)^2,\lambda_n}\}}\sum_{k=(\log n)^2+1}^{2(\log n)^2-2}\;\underset{z>u}{\sum_{|z|=k}}H_ze^{-V(z)}\un_{\{z\in\mathcal{V}^{h,u}_{2(\log n)^2,\lambda_n}\}}\Big],
\end{align*}
and
\begin{align*}
    \Sigma_{n,2}:=c_1p_n^2\Eb\Big[\sum_{|u|=(\log n)^2}\un_{\{u\in\mathcal{V}^{h}_{(\log n)^2,\lambda_n}\}}\sum_{k=(\log n)^2+1}^{2(\log n)^2-2}\;\underset{z>u}{\sum_{|z|=k}}e^{-2V(z)}\un_{\{z\in\mathcal{V}^{h,u}_{2(\log n)^2,\lambda_n}\}}\Big],
\end{align*}
where $c_1:=\Eb[\sum_{x\not=y;\; |x|=|y|=1}e^{-V(x)-V(y)}]$. Observe that, for any $z>u$, if $u\in\mathcal{V}^h_{(\log n)^2,\lambda_n}$ and $z\in\mathcal{V}^{h,u}_{2(\log n)^2,\lambda_n}$, then $V_{u}(z)\geq-h$ and $H_{u}\lor H_{u,x}\leq\lambda_n$ so $H_z=(H_{u}-1)e^{-V_{u}(z)}+H_{u,z}\leq(1+e^{h})\lambda_n$, where we recall that $\lambda_n=n^{r(1+1/\kappa)/2}$ and $p_n=\lfloor\alpha n^r\rfloor$. Hence
\begin{align*}
    \Sigma_{n,1}\leq 2(1+e^h)c_1p_n\lambda_n\Eb\Big[\sum_{|u|=(\log n)^2}\sum_{k=(\log n)^2+1}^{2(\log n)^2-2}\;\underset{z>u}{\sum_{|z|=k}}e^{-V(z)}\Big]\leq 2(1+e^h)\alpha c_1 n^{r+r(1+1/\kappa)/2}(\log n)^2.
\end{align*}
For $\Sigma_{n,2}$, since $\psi'(1)<0$ and $\kappa>1$, there exists $\delta_2\in(1,2)$ such that $\psi(\delta_2)\in(-\infty,0)$. Moreover, for $z\in\mathcal{V}^{h,u}_{2(\log n)^2,\lambda_n}$ and $u\in\mathcal{V}^h_{(\log n)^2,\lambda_n}$, $2V(z)=2V_u(z)+2V(u)\geq \delta_2V_u(z)+\delta_2V(z)-(2-\delta_2)h$ so
\begin{align*}
    \Sigma_{2,n}\leq c_1p_n^2e^{(2-\delta_2)h}\Eb\Big[\sum_{|u|=(\log n)^2}e^{-\delta_2V(u)}\sum_{k=(\log n)^2+1}^{2(\log n)^2-2}\;\underset{z>u}{\sum_{|z|=k}}e^{-\delta_2V_u(z)}\Big]\leq\frac{c_1e^{(2-\delta_2)h}}{1-e^{-|\psi(\delta_2)|}}p_n^2e^{-(\log n)^2|\psi(r_2)|}.
\end{align*}
Since $r+r(1+1/\kappa)/2<2r$, we deduce from these two inequalities that $\Sigma_n=o(n^{2r})$ as $n\to\infty$. \\
We now deal with $\Tilde{\Sigma}_n$. One can notice that for all $m\in\N^*$
\begin{align*}
    \varphi_1(m)=\E\Big[\underset{|x|=|y|=1}{\sum_{x\not=y}}\un_{\{N_x^{\tau^m}=N_y^{\tau^m}=1\}}\Big]\leq\E\Big[\underset{|x|=|y|=1}{\sum_{x\not=y}}N_x^{\tau^m}N_y^{\tau^m}\Big]\leq 3c_1m^2,
\end{align*}
where we have used Lemma \ref{LemmaEdge} for the last inequality so
\begin{align*}
    \Tilde{\Sigma}_n\leq 3c_1\E\Big[\sum_{|u|=(\log n)^2}\un_{\{u\in\mathcal{V}^{h}_{(\log n)^2,\lambda_n}\}}\underset{z>u}{\sum_{|z|=2(\log n)^2-1}}\un_{\{z\in\mathcal{V}^{h,u}_{2(\log n)^2,\lambda_n}\}}\big(N_z^{\tau^{p_n}}\big)^2\Big].
\end{align*}
One can notice that, if $u\in\mathcal{V}^{h}_{(\log n)^2,\lambda_n}$ and $z>u$ such that $z\in\mathcal{V}^{h,u}_{2(\log n)^2,\lambda_n}$, then for any $u<w\leq z$, $H_w=(H_u-1)e^{-V_u(w)}+H_{u,w}\leq(1+e^h)\lambda_n$ and $V(w)=V_u(w)+V(u)\geq -2h$ so $z\in\mathcal{V}^{2h}_{2(\log n)^2,(1+e^h)\lambda_n}$. Hence, $\Tilde{\Sigma}_n\leq 3c_1\E[\sum_{|z|=2(\log n)^2-1}\un_{\{z\in\mathcal{V}^{2h}_{2(\log n)^2,(1+e^h)\lambda_n}\}}(N_z^{\tau^{p_n}})^2]$ and similar arguments as the ones used to prove $\Sigma_n=o(n^{2r})$ lead to $\Tilde{\Sigma}_n=o(n^{2r})$. Exact same arguments yield
\begin{align*}
    \E\Big[\sum_{|u|=(\log n)^2}\un_{\{u\in\mathcal{V}^{h}_{(\log n)^2,\lambda_n}\}}\sum_{x\not=y\in\T;\; x\land y=u}\un_{\{x,y\in\Tilde{\mathcal{B}}^{p_n}_u\}}\Big]=o(n^{2r}),
\end{align*}
thus proving that $\E[\sum_{|u|=(\log n)^2}(\Tilde{B}^{p_n}_{u,\lambda_n})^2\un_{\{u\in\mathcal{V}^h_{(\log n)^2,\lambda_n}\}}]=o(n^{2r})$ as $n\to\infty$. \\
We are left to deal with $\E[\sum_{|u|=|v|=(\log n)^2}(N_u^{\tau{p_n}}-\Tilde{B}^{p_n}_{u,\lambda_n})N_v^{\tau{p_n}}\un_{\{u,v\in\mathcal{V}^h_{(\log n)^2,\lambda_n}\}}\un_{\{N_u^{\tau^{p_n}}\leq hp_n\}}]$. Thanks to the mixed branching property \eqref{BP} and Fact \ref{CardSetRoot}
\begin{align*}
    &\E\Big[\sum_{|u|=|v|=(\log n)^2}\big(N_u^{\tau{p_n}}-\Tilde{B}^{p_n}_{u,\lambda_n}\big)N_v^{\tau{p_n}}\un_{\{u,v\in\mathcal{V}^h_{(\log n)^2,\lambda_n}\}}\un_{\{N_u^{\tau^{p_n}}\leq hp_n\}}\Big] \\ & =\E\Big[\sum_{|u|=|v|=(\log n)^2}\Bar{\Psi}^h_n\big(N_u^{\tau{p_n}}\big)N_v^{\tau{p_n}}\un_{\{u,v\in\mathcal{V}^h_{(\log n)^2,\lambda_n}\}}\Big],
\end{align*}
where we recall that for any $m,h\in\N^*$ and $h>0$, $\Bar{\Psi}^h_n(m)=(m-\Psi^h_n(m))\un_{\{m\leq hp_n\}}$ and $\Psi^h_n(m)=\E[\sum_{x\in\mathcal{V}^h_{(\log n)^2,\lambda_n}}\un_{\{N_x^{\tau^{m}}=1,\; \min_{1\leq i<|x|}N_{x_i}^{\tau^{m}}\geq 2\}}]$. Also recall that for any $\zeta\in(0,1-1/\kappa)$, $0\leq\Bar{\Psi}^h_n(m)\leq C_{\ref{CardSetRoot}}m\P(\mathcal{R}_{\tau^{hp_n},(\log n)^2}\not\subset\mathcal{V}^{h}_{(\log n)^2,\lambda_n})^{\zeta}$, thus giving
\begin{align*}
    &\E\Big[\sum_{|u|=|v|=(\log n)^2}\big(N_u^{\tau{p_n}}-\Tilde{B}^{p_n}_{u,\lambda_n}\big)N_v^{\tau{p_n}}\un_{\{u,v\in\mathcal{V}^h_{(\log n)^2,\lambda_n}\}}\un_{\{N_u^{\tau^{p_n}}\leq hp_n\}}\Big] \\ & \leq C_{\ref{CardSetRoot}}\E\Big[\sum_{|u|=|v|=(\log n)^2}N_u^{\tau{p_n}}N_v^{\tau{p_n}}\un_{\{u,v\in\mathcal{V}^h_{(\log n)^2,\lambda_n}\}}\Big]\times\P(\mathcal{R}_{\tau^{hp_n},(\log n)^2}\not\subset\mathcal{V}^{h}_{(\log n)^2,\lambda_n})^{\zeta},
\end{align*}
and with similar arguments as the ones used to show that $\Sigma_n=o(n^{2r})$ as $n\to\infty$, one can prove that $\E[\sum_{|u|=|v|=(\log n)^2}N_u^{\tau{p_n}}N_v^{\tau{p_n}}\un_{\{u,v\in\mathcal{V}^h_{(\log n)^2,\lambda_n}\}}]=O(n^{2r})$. Hence, by Lemma \ref{RegLine} first and by \eqref{MoyenneCarre}, using the fact that $\E[\sum_{|u|=(\log n)^2}(\Tilde{B}^{p_n}_{u,\lambda_n})^2\un_{\{u\in\mathcal{V}^h_{(\log n)^2,\lambda_n}\}}]=o(n^{2r})$, we obtain

\begin{align}\label{EspPart12}
    \limsup_{n\to\infty}\E\Big[|\sum_{|u|=(\log n)^2}(\Tilde{B}^{p_n}_{u,\lambda_n}-N_u^{\tau{p_n}})\un_{\{u\in\mathcal{V}^h_{(\log n)^2,\lambda_n}\}}\un_{\{N_u^{\tau^{p_n}}\leq hp_n\}}|^2\Big]\leq C_{\ref{RootNumber},2}e^{-\eta_0h\zeta},
\end{align}
for some constant $C_{\ref{RootNumber},2}>0$. By \eqref{DecoupeEsp}, since \eqref{EspPart11} and \eqref{EspPart12} hold for any $h>0$, \eqref{EspPart1} is proved by taking $h\to+\infty$ and this ends the first step.

\vspace{0.2cm}

\noindent The second step is dedicated to the proof of the following: for any $\varepsilon_1>0$
\begin{align}\label{EspPart2}
    \lim_{n\to\infty}\P^*\Big(\Big|\frac{1}{n^r}B^{p_n}_{(\log n)^2}-\alpha W_{\infty}\Big|>\varepsilon_1\Big)=0.
\end{align}
For that, we use the first step, telling, that somehow, $B^{p_n}_{(\log n)^2}$ behaves like $\sum_{|u|=(\log n)^2}N_u^{\tau^{p_n}}$ and we then prove that $\sum_{|u|=(\log n)^2}N_u^{\tau^{p_n}}$ concentrates around its quenched mean. \\ 
One can notice that $B^{p_n}_{(\log n)^2}=\sum_{|u|=(\log n)^2}\Tilde{B}^{p_n}_{u}$ where we recall that for any $u\in\T$, $\Tilde{B}^{p_n}_u$ is equal to $\sum_{x>u}\un_{\{N_x^{\tau^{p_n}}=1,\; \min_{|u|<i<|x|}N_{x_i}^{\tau^{p_n}}\geq 2\}}$ so
\begin{align*}
    \P^*\Big(\Big|\frac{1}{n^r}B^{p_n}_{(\log n)^2}-\alpha W_{\infty}\Big|>\varepsilon_1\Big)\leq&\P^*\Big(\Big|\sum_{|u|=(\log n)^2}\un_{\{u\in\mathcal{V}^h_{(\log n)^2,\lambda_n}\}}\un_{\{N_u^{\tau^{p_n}}\leq hp_n\}}(\Tilde{B}^{p_n}_u-N_u^{\tau^{p_n}})\Big|>\varepsilon_1n^r/2\Big) \\ & +\P^*\Big(\Big|\sum_{|u|=(\log n)^2}\un_{\{u\in\mathcal{O}_{(\log n)^2,\lambda_n}\}}\big(N_u^{\tau^{p_n}}-\E^{\mathcal{E}}[N_u^{\tau^{p_n}}]\big)\Big|>\varepsilon_1n^r/4\Big) \\ & +\Pb^*\Big(\Big|\frac{1}{n^r}\E^{\mathcal{E}}\Big[\sum_{|u|=(\log n)^2}N_u^{\tau^{p_n}}\Big]-\alpha W_{\infty}\Big|>\varepsilon_1/4\Big) \\ & +2\Big(1-\P^*\big(\Tilde{V}^{h,p_n}_{(\log n)^2,\lambda_n}\big)\Big),
\end{align*}
where $\Tilde{V}^{h,p_n}_{(\log n)^2,\lambda_n}$ is defined to be the event $\{\mathcal{R}_{\tau^{p_n}, (\log n)^2}\subset\mathcal{V}^{h}_{(\log n)^2,\lambda_n},\; \max_{|u|=(\log n)^2}N_u^{\tau^{p_n}}\leq hp_n\}$ and we recall that $\lambda_n=n^{r(1+1/\kappa)/2}$. Let us show that $\lim_{n\to\infty}\P^*(|\sum_{|u|=(\log n)^2}\un_{\{u\in\mathcal{O}_{(\log n)^2,\lambda_n}\}}(N_u^{\tau^{p_n}}-\E^{\mathcal{E}}[N_u^{\tau^{p_n}}])|>\varepsilon_1n^r/4)=0$. Indeed, restricting ourselves to the set of vertices $\mathcal{O}_{(\log n)^2,\lambda_n}$, we have, using the Bienaymé-Tchebychev inequality
\begin{align*}
    &\P^*\Big(\Big|\sum_{|u|=(\log n)^2}\un_{\{u\in\mathcal{O}_{(\log n)^2,\lambda_n}\}}\big(N_u^{\tau^{p_n}}-\E^{\mathcal{E}}[N_u^{\tau^{p_n}}]\big)\Big|>\varepsilon_1n^r/4\Big) \\ & \leq\frac{16\alpha}{\varepsilon_1^2n^r}\E^{\mathcal{E}}\Big[\Big(\sum_{|u|=(\log n)^2}\un_{\{u\in\mathcal{O}_{(\log n)^2,\lambda_n}\}}N_u^{\tau^{1}}\Big)^2\Big],
\end{align*}
where we have used that the random variables $\sum_{|u|=(\log n)^2}\un_{\{u\in\mathcal{O}_{(\log n)^2,\lambda_n}\}}(N_u^{\tau^j}-N_u^{\tau^{j-1}})$, $j\in\{1,\ldots,p_n\}$, are independent and distributed as $\sum_{|u|=(\log n)^2}\un_{\{u\in\mathcal{O}_{(\log n)^2,\lambda_n}\}}N_u^{\tau^1}$ under $\P^{\mathcal{E}}$, thanks to the strong Markov property. Then, thanks to Lemma \ref{LemmaEdge}
\begin{align*}
    \E^{\mathcal{E}}\Big[\Big(\sum_{|u|=(\log n)^2}\un_{\{u\in\mathcal{O}_{(\log n)^2,\lambda_n}\}}N_u^{\tau^{1}}\Big)^2\Big]\leq &2\sum_{|u|=(\log n)^2}\un_{\{u\in\mathcal{O}_{(\log n)^2,\lambda_n}\}}H_ue^{-V(u)} \\ & +2\underset{|u|=|v|=(\log n)^2}{\sum_{u\not=v}}\un_{\{u\land v\in\mathcal{O}_{(\log n)^2,\lambda_n}\}}H_{u\land v}e^{V(u\land v)}e^{-V(u)-V(v)}.
\end{align*}
By definition of $\mathcal{O}_{(\log n)^2,\lambda_n}$, $\un_{\{w\in\mathcal{O}_{(\log n)^2,\lambda_n}\}}H_{w}\leq\lambda_n$ so $\Eb[\sum_{|u|=(\log n)^2}\un_{\{u\in\mathcal{O}_{(\log n)^2,\lambda_n}\}}H_ue^{-V(u)}]\leq\lambda_n$ and $\Eb[\sum_{u\not=v;|u|=|v|=(\log n)^2}\un_{\{u\land v\in\mathcal{O}_{(\log n)^2,\lambda_n}\}}H_{u\land v}e^{V(u\land v)}e^{-V(u)-V(v)}]\leq c_1\lambda_n(\log n)^2$ thus implying that the annealed mean $\E[(\sum_{|u|=(\log n)^2}\un_{\{u\in\mathcal{O}_{(\log n)^2,\lambda_n}\}}N_u^{\tau^{1}})^2]$ is smaller than $2(1+c_1)\lambda_n(\log n)^2=2(1+c_1)n^{r(1+1/\kappa)/2}(\log n)^2$ and since $r(1+1/\kappa)/2<r$, we obtain
\begin{align*}
    \lim_{n\to\infty}\P^*\Big(\Big|\sum_{|u|=(\log n)^2}\un_{\{u\in\mathcal{O}_{(\log n)^2,\lambda_n}\}}\big(N_u^{\tau^{p_n}}-\E^{\mathcal{E}}[N_u^{\tau^{p_n}}]\big)\Big|>\varepsilon_1n^r/4\Big)=0.
\end{align*}
We now turn to $\Pb^*(|\frac{1}{n^r}\E^{\mathcal{E}}[\sum_{|u|=(\log n)^2}N_u^{\tau^{p_n}}]-\alpha W_{\infty}|>\varepsilon_1/4)$. As we have already mentioned previously, $\E^{\mathcal{E}}[\sum_{|u|=(\log n)^2}N_u^{\tau^{p_n}}]=p_n\sum_{|x|=(\log n)^2}e^{-V(x)}$ so $\E^{\mathcal{E}}[\sum_{|u|=(\log n)^2}N_u^{\tau^{p_n}}]/n^r$ goes to $\alpha W_{\infty}$ $\Pb^*$-almost surely as $n\to\infty$. Finally, note that, thanks to Lemma \ref{RegLine} and Markov inequality
\begin{align*}
    \limsup_{n\to\infty}\Big(1-\P^*\big(\Tilde{V}^{h,p_n}_{(\log n)^2,\lambda_n}\big)\Big)&\leq\limsup_{n\to\infty}\Big(\P^*\big(\mathcal{R}_{\tau^{p_n}, (\log n)^2}\not\subset\mathcal{V}^{h}_{(\log n)^2,\lambda_n}\big)+\frac{1}{hp_n}\E\Big[\sum_{|u|=(\log n)^2}N_u^{\tau^{p_n}}\Big]\Big) \\ & \leq \frac{e^{-\eta_0h}}{1-e^{-|\psi(\eta_0)|}}+\frac{1}{h},
\end{align*}
so thanks to \eqref{EspPart1}, letting $h\to\infty$ leads to \eqref{EspPart2}.

\vspace{0.2cm}

\noindent In the third and last step, we show \eqref{EspPart2} actually holds uniformly in $\alpha\in[0,M]$. Indeed, on the event $\{(\mathcal{B}^{p}_{(\log n)^2})_{p\leq \lfloor Mn^{r}\rfloor}\textrm{ is non-decreasing}\}$, the function $\alpha\in[0,M]\mapsto\frac{1}{n^r}B^{\lfloor\alpha n^r\rfloor}_{(\log n)^2}$ is non-decreasing for any $n\in\N^*$. Moreover, the function $\alpha\in[0,M]\mapsto\alpha W_{\infty}$ is continuous so Lemma \ref{NDSet}, together with Dini's theorem leads to the result and this ends the proof. 
\end{proof}

\subsection{Proofs of Theorem \ref{Th2} to Theorem \ref{Th3} and Corollary \ref{Coro1}}

\noindent Recall that $T^0=0$, for any $j\in\N^*$, $T^j=\inf\{k>T^{j-1};\; X_k=e^*\}$ and $\mathcal{L}^m=\sum_{k=1}^m\un_{\{X_k=e^*\}}$. We are now ready to prove our three theorems and our corollary. We proceed as follows: we first introduce a new random walk we use to prove that for all $\kappa>1$, the sequences of processes $((T^{\lfloor\alpha n\rfloor}/\kappa_n;\; \alpha\geq 0))_{n\in\N^*}$ and $((\mathcal{L}^{\lfloor t\kappa_n\rfloor}/n;\; t\geq 0))_{n\in\N^*}$ converge in law, under the annealed probability $\P^*$, for the Skorokhod topology on $D([0,\infty))$, recalling that $\kappa_n=n^{\kappa\land 2}$ if $\kappa\not=2$ and $\kappa_n=n^2/\log n$ if $\kappa=2$. From this, we deduce the joint convergence in law, under $\P^*$, of $((T^{\lfloor\alpha n\rfloor}/\kappa_n;\; \alpha\geq 0),(\mathcal{L}^{\lfloor t\kappa_n\rfloor}/n;\; t\geq 0))_{n\in\N^*}$ for the Skorokhod product topology on $D([0,\infty))\times D([0,\infty))$. Then, we show that this joint converge holds in law under the quenched probability $\P^{\mathcal{E}}$, for $\Pb^*$-almost every environment, thus proving Theorem \ref{Th2} and Theorem \ref{Th1}. Finally, the end of this section will be dedicated to the proofs of Theorem \ref{Th3} and Corollary \ref{Coro1}. Note that it could be possible to prove directly the joint convergence under the annealed probability $\P^*$ but we choose to treat each component separately in order to avoid unnecessary complication.

\begin{proof}[Proofs of Theorem \ref{Th2} and Theorem \ref{Th1}]\phantom{.}

\vspace{0.3cm}

\noindent Let $\ell_n:=(\log n)^2$, recall that $\mathcal{B}^{p}_{\ell_n}$ is the set $\{x\in\T,\; |x|>\ell_n;\; N_x^{\tau^p}=1\textrm{ and }\min_{\ell<i<|x|}N_{x_i}^{\tau^{p}}\geq 2\}$ and $B^{p}_{\ell_n}$ is the cardinal of $\mathcal{B}^{p}_{\ell_n}$. On the event $\{\mathcal{B}^{p}_{\ell_n}\not=\varnothing\}$, for any $i\in\{1,\ldots,B^{p}_{\ell_n}\}$, denote by $e_i^{p}$ the $i$-th vertex of $\mathcal{B}^{p}_{\ell_n}$ visited by the random walk $(X_j)_{j\in\N}$. Also define the tree $\T^p_i$ rooted at $e^p_i$ by $\T^p_i:=\{x\in\T;\; x\geq e^p_i\}$. $\T_i^{p}$ is then the sub-tree of $\T$ made up of the descendants of $e_i^{p}$. \\
Introduce $T_{e_i^{p}}:=\inf\{k\geq 1;\; X_k=e_i^{p}\}$, define the translated walk $X^{p}_{i,j}:=X_{j+T_{e_i^{p}}}$ starting from $e_i^{p}$ under $\P^{\mathcal{E}}$. Also define $\tau^p_0=0$, for $i\geq 1$, let $\tau^{p}_i:=\inf\{j\geq 1;\; X^{p}_{i,j}=(e^{p}_i)^*\}$ and $\Tilde{\tau}^p_i:=\sum_{\mathfrak{l}=0}^i\tau^p_{\mathfrak{l}}$.Then, introduce the random walk $\Tilde{X}^p$, defined by: for all $j\in\N$
\begin{align*}
    &\Tilde{X}^p_j=X^{p}_{i,j-\Tilde{\tau}^p_{i-1}}\;\;\textrm{ if and only if}\;\; j\in\big\{\Tilde{\tau}^p_{i-1},\ldots,\Tilde{\tau}^p_i-1\big\}\;\textrm{ for some }\; i\in\{1,\ldots,B^p_{\ell_n}\} \\ & \textrm{and } X^{p}_j=e^p_{B^p_{\ell_n}}\;\textrm{ for any }\; j\geq\Tilde{\tau}^p_{B^p_{\ell_n}},
\end{align*}
Hence, under $\P^{\mathcal{E}}$, $\Tilde{X}^p$ is a $\bigcup_{i=1}^{B^p_{\ell_n}}\T^p_i$-valued random walk starting from $e^p_1$ and absorbed in $e^p_{B^p_{\ell_n}}$ such that, for any $i\in\{1,\ldots,B^p_{\ell_n}\}$ and any $j\in\{\Tilde{\tau}^p_{i-1},\ldots,\Tilde{\tau}^p_i-1\}$, $\Tilde{X}^p_j$ belongs to the tree $\T^p_i$. Moreover, it behaves similarly as the restriction of the original random walk $\X$ to the set $\bigcup_{i=1}^{B^p_{\ell_n}}\T^p_i$. \\
Let us now define the range associated with the random walk $\Tilde{X}^p$. For that, introduce the following edge local time: for any $i\in\{1,\ldots,B^p_{\ell_n}\}$, let $N^p_{i,e^p_i}=1$ and for any $x\in\T^p_i$, $x\not=e^p_i$
\begin{align*}
    N_{i,x}^{p}:=\sum_{k=\Tilde{\tau}^p_{i-1}+1}^{\Tilde{\tau}^{p}_i-1}\un_{\{\Tilde{X}^{p}_{k-1}=x^*,\; \Tilde{X}^{p}_{k}=x\}},    
\end{align*}
the number of times the oriented edge $(x^*,x)$ has been visited by $(\Tilde{X}^p_j)_{j\in\{\Tilde{\tau}^p_{i-1}+1,\ldots,\Tilde{\tau}^p_i-1\}}$. Note, by definition, that $N^p_{i,x}$ coincides with the regular edge local time $N^{\tau^p}_x$. We can then define the tree $\mathcal{R}_i^{p}$ rooted at $e_i^{p}$ by 
\begin{align*}
    \mathcal{R}_i^{p}:=\{x\geq e_i^{p};\; N^p_{i,x}\geq 1\}.
\end{align*}
$\mathcal{R}_i^{p}$ is the sub-tree of $\T^{p}_i$ made up of the vertices visited by $(\Tilde{X}^p_j)_{j\in\{\Tilde{\tau}^p_{i-1}+1,\ldots,\Tilde{\tau}^p_i-1\}}$. Note that, by definition, we have $\mathcal{R}_i^{p}\cap\mathcal{R}_{i'}^{p}=\varnothing$ when $i\not=i'$. Besides, if $\Tilde{\tau}^p_i-1=\Tilde{\tau}^p_{i-1}$, then $\mathcal{R}^p_i=\{e^p_i\}$. \\
For any $i\in\N^*$, define the multi-type tree $(\Tilde{\mathcal{R}}^{p}_i,(\Tilde{N}^{p}_{i,x};\; x\in\Tilde{\mathcal{R}}^{p}_{i,x}))$ by: for all $i\in\{1,\ldots,B^{p}_{\ell_n}\}$, $\Tilde{\mathcal{R}}^{p}_i=\mathcal{R}^{p}_i$, $\Tilde{N}^{p}_{i,x}=N^{p}_{i,x}$ for any $x\in\mathcal{R}^{p}_i$ and $(\Tilde{\mathcal{R}}^{p}_i,(\Tilde{N}^{p}_{i,x};\; x\in\Tilde{\mathcal{R}}^{p}_{i,x}))$, $i>B^{p}_{\ell_n}$, are independent copies of $(\mathcal{R}_{\tau^1},(N_x^{\tau^1};\; x\in\mathcal{R}_{\tau^1}))$. Moreover, the family $((\Tilde{\mathcal{R}}^{p}_i,(\Tilde{N}^{p}_{i,x};\; x\in\Tilde{\mathcal{R}}^{p}_{i,x})))_{i>B^{p}_{\ell_n}}$ is assumed to be independent of $\Tilde{X}^{p}$. Therefore, under $\P$, $((\Tilde{\mathcal{R}}^{p}_i,(\Tilde{N}^{p}_{i,x};\; x\in\Tilde{\mathcal{R}}^{p}_{i,x})))_{i\in\N^*}$ is distributed as the multi-type Galton-Watson forest $\mathcal{F}^1=((\mathcal{R}_{1,i},(\beta_i(x);\; x\in\mathcal{R}_{1,i})))_{i\in\N^*}$ which satisfies hypothesis \ref{H1}, \ref{H2} and \ref{H3}, where we recall that $(\mathcal{R}_{1,i},(\beta_i(x);\; x\in\mathcal{R}_{1,i}))$, $i\in\N^*$, are independent copies of $(\mathcal{R}_{\tau^1},(N_x^{\tau^1}; x\in\mathcal{R}_{\tau^1}))$ under $\P$, see Fact \ref{GWF_Hyp}.

\vspace{0.3cm}

\noindent \textbf{Convergence of $((T^{\lfloor\alpha n\rfloor}/\kappa_n;\; \alpha\geq 0))_{n\in\N^*}$ under $\P^*$}

\vspace{0.2cm}

\noindent One can notice that it is enough to prove the convergence of the sequence $((T^{\lfloor\alpha n\rfloor}/\kappa_n;\; \alpha\in[\delta,M]))_{n\in\N}$ for any $M>0$ and $\delta\in(0,M)$. Indeed, assume the convergence holds on $[\delta,M]$ for any $\delta\in(0,M)$. Then, the convergence in law of the finite-dimensional of $((T^{\lfloor\alpha n\rfloor}/\kappa_n;\; \alpha\in[0,M]))_{n\in\N^*}$ follows immediately. Moreover, since the function $\alpha\in[0,M]\mapsto T^{\lfloor\alpha n\rfloor}$ is non-decreasing, we have that $\P^{\mathcal{E}}(\sup_{\alpha\in[0,\delta)}T^{\lfloor\alpha n\rfloor}>\varepsilon_1\kappa_n)$ is smaller than $\P^{\mathcal{E}}(T^{\lfloor\delta n\rfloor}>\varepsilon_1\kappa_n)$ and then, thanks to the scaling property of $(\tau_{\alpha}(\VectCoord{\Tilde{Y}}{\kappa});\; \alpha)$, see \eqref{LaplaceTA}, $\lim_{\delta\to 0}\limsup_{n\to\infty}\P^{\mathcal{E}}(\sup_{\alpha\in[0,\delta)}T^{\lfloor\alpha n\rfloor}>\varepsilon_1\kappa_n)=\lim_{\delta\to 0}\mathtt{P}(\tau_{1}(\VectCoord{\Tilde{Y}}{\kappa})>\varepsilon_1/\delta^{\kappa\land 2})=0$ for any $\varepsilon_1>0$,
which yields the tightness of the sequence $((T^{\lfloor\alpha n\rfloor}/\kappa_n;\; \alpha\in[0,M]))_{n\in\N^*}$. \\
Also recall that $\tau^0=0$ and for any $j\in\N^*$, $\tau^j=\inf\{k>T^{j-1};\; X_k=e,\; X_{k-1}=e^*\}$. One can see that, under $\P^{\mathcal{E}}$, for any $p\in\N^*$, $T^p=\tau^p-p$. Hence, we will prove the convergence of $((\tau^{\lfloor\alpha n\rfloor}/\kappa_n;\; \alpha\in[0,M]))_{n\in\N^*}$ for any $\delta\in(0,M)$ instead.

\vspace{0.2cm}

\noindent Inspired from \cite{AidRap} and \cite{deRaph1}, we relate $\tau^{\lfloor\alpha n\rfloor}$ with the types of the multi-type Galton-Watson tree $(\mathcal{R}_{\tau^{\lfloor\alpha n\rfloor}},(N_x^{\tau^{\lfloor\alpha n\rfloor}};\; x\in\mathcal{R}_{\tau^{\lfloor\alpha n\rfloor}}))$, see Fact \ref{FactGWMulti}. For that, introduce, for any $i\in\{1,\ldots,B^p_{\ell_n}\}$ and any $x\in\T^p_i$ the local time $\mathcal{L}^p_{i,x}:=N^p_{i,x}+\sum_{y;\; y^*=x}N^p_{i,y}$. Let us show that for any $\varepsilon_1>0$ and all $\kappa>1$, $\Pb^*$-almost surely
\begin{align}\label{ProbaTA1}
    \lim_{n\to\infty}\P^{\mathcal{E}}\Big(\sup_{p\in[\lfloor\delta n\rfloor,\ldots,\lfloor Mn\rfloor]}\Big|\tau^{p}-\sum_{i=1}^{B^{p}_{\ell_n}}\sum_{x\in\mathcal{R}^{p}_i}\mathcal{L}_{i,x}^{p}\Big|>\varepsilon_1\kappa_n\Big)=0,
\end{align}
with the convention $\sum_{\varnothing}=0$. Indeed, as we previously noticed, $\mathcal{L}^p_{i,x}=\mathcal{L}^{\tau^p}_x=\sum_{j=1}^{\tau^p}\un_{\{X_j=x\}}$ for any $x\in\T^p_i$ and since $\tau^{p}=\sum_{x\in\mathcal{R}_{\tau^{p}}}\mathcal{L}_x^{\tau^{p}}$, we have
\begin{align*}
    \tau^{p}-\sum_{i=1}^{B^{p}_{\ell_n}}\sum_{x\in\mathcal{R}^{p}_i}\mathcal{L}_{i,x}^{p}=\sum_{x\in C^{p}_{\ell_n}}\mathcal{L}_x^{\tau^p},
\end{align*}
where $x\in C^{p}_{\ell_n}$ if and only if $x\in\mathcal{R}_{\tau^p}$ and for all $i\in\{1,\ldots,B^{p}_{\ell_n}\}$, $e_i^{p}$ is not an ancestor of the vertex $x$. Then, on the event $\bigcap_{p\in\{\lfloor\delta n\rfloor,\ldots,\lfloor Mn\rfloor\}}\{\sup_{x\in C^{p}_{\ell_n}}|x|\leq(\log n)^3\}=\{\sup_{x\in C^{\lfloor \delta n\rfloor}_{\ell_n}}|x|\leq(\log n)^3\}$
\begin{align*}
    \sup_{p\in[\lfloor\delta n\rfloor,\ldots,\lfloor Mn\rfloor]}\Big|\tau^{p}-\sum_{i=1}^{B^{p}_{\ell_n}}\sum_{x\in\mathcal{R}^{p}_i}\mathcal{L}_{i,x}^{p}\Big|\leq\sum_{|x|\leq(\log n)^3}\mathcal{L}_x^{\tau^{\lfloor Mn\rfloor}}.
\end{align*}
Moreover, $\E^{\mathcal{E}}[\sum_{|x|\leq(\log n)^3}\mathcal{L}_x^{\tau^{\lfloor Mn\rfloor}}]=\lfloor Mn\rfloor\sum_{k\leq(\log n)^3}(W_{k}+W_{k+1})$, where we recall that $W_{k}$ equals $\sum_{|x|=k}e^{-V(x)}$. Hence, thanks to the Markov inequality
\begin{align*}
    &\P^{\mathcal{E}}\Big(\sup_{p\in[\lfloor\delta n\rfloor,\ldots,\lfloor Mn\rfloor]}\Big|\tau^{p}-\sum_{i=1}^{B^{p}_{\ell_n}}\sum_{x\in\mathcal{R}^{p}_i}\mathcal{L}_{i,x}^{p}\Big|>\varepsilon_1\kappa_n\Big) \\ & \leq\P^{\mathcal{E}}\Big(\sup_{x\in C^{\lfloor \delta n\rfloor}_{\ell_n}}|x|>(\log n)^3\Big)+\P^{\mathcal{E}}\Big(\sum_{|x|\leq(\log n)^3}\mathcal{L}_x^{\tau^{\lfloor Mn\rfloor}}>\varepsilon_1\kappa_n\Big) \\ & \leq \P^{\mathcal{E}}\Big(\sup_{x\in C^{\lfloor \delta n\rfloor}_{\ell_n}}|x|>(\log n)^3\Big)+M\frac{n}{\kappa_n}\sum_{k\leq(\log n)^3}\big(W_k+W_{k+1}\big).
\end{align*}
Note that, $\Pb$-almost surely, $\lim_{n\to\infty}\P^{\mathcal{E}}(\sup_{x\in C^{\lfloor\delta n\rfloor}_{\ell_n}}|x|>(\log n)^3)=0$ (the proof is similar to the one of Lemma \ref{PropSetRoots1}). Moreover, recall that the martingale $(W_{k})_k$ converges $\Pb^*$-almost surely to the positive random variable $W_{\infty}$ so does $((\log n)^{-3}\sum_{k\leq(\log n)^3}W_k)_n$ and since we have $\kappa>1$, $(\frac{n}{\kappa_n}\sum_{k\leq(\log n)^3}\big(W_k+W_{k+1}))$ goes to $0$, $\Pb^*$-almost surely as $n\to\infty$, which yields \eqref{ProbaTA1}. 

\vspace{0.2cm}
    
\noindent Let us now prove that $((\frac{1}{\kappa_n}\sum_{i=1}^{B^{\lfloor\alpha n\rfloor}_{\ell_n}}\sum_{x\in\mathcal{R}^{\lfloor\alpha n\rfloor}_i}\mathcal{L}_{i,x}^{\lfloor\alpha n\rfloor}; \alpha\in[\delta,M]))_{n\in\N^*}$ converges in law under $\P^*$ to the limit in Theorem \ref{Th1}. One can notice, on the event $\{(\mathcal{B}^{p}_{\ell_n})_{p\leq \lfloor Mn\rfloor}\textrm{ is non-decreasing};\; \mathcal{B}^{\lfloor \delta n\rfloor}_{\ell_n}\not=\varnothing\}$ (whose quenched probability goes to $1$ as $n$ goes to $\infty$ by Lemma \ref{NDSet} and Lemma \ref{PropSetRoots1} \ref{Fact2.1}), that for all $\alpha\in[\delta,M]$ and any $i\in\{1,\ldots,B^{\lfloor\alpha n\rfloor}_{\ell_n}\}$, we have $e^{\lfloor\alpha n\rfloor}_i=e^{\lfloor Mn\rfloor}_i$ and $N^{\lfloor\alpha n\rfloor}_{i,z}=N^{\lfloor Mn\rfloor}_{i,z}$ for any $z\in\T^{\lfloor\alpha n\rfloor}_i$ thus giving $\mathcal{R}^{\lfloor\alpha n\rfloor}_i=\mathcal{R}^{\lfloor Mn\rfloor}_i$ and then, denoting $\Tilde{\mathcal{L}}^p_{i,x}=\Tilde{N}^p_{i,x}+\sum_{y;\; y^*=x}\Tilde{N}^p_{i,x}$, we have
\begin{align*}
    \sum_{i=1}^{B^{\lfloor\alpha n\rfloor}_{\ell_n}}\sum_{x\in\mathcal{R}^{\lfloor\alpha n\rfloor}_i}\mathcal{L}_{i,x}^{\lfloor\alpha n\rfloor}=\sum_{i=1}^{B^{\lfloor\alpha n\rfloor}_{\ell_n}}\sum_{x\in\mathcal{R}^{\lfloor Mn\rfloor}_i}\mathcal{L}_{i,x}^{\lfloor Mn\rfloor}=\sum_{i=1}^{B^{\lfloor\alpha n\rfloor}_{\ell_n}}\sum_{x\in\Tilde{\mathcal{R}}^{\lfloor Mn\rfloor}_i}\Tilde{\mathcal{L}}_{i,x}^{\lfloor Mn\rfloor}.
\end{align*}
Moreover, under $\P$, $((\Tilde{\mathcal{R}}^{\lfloor Mn\rfloor}_i,(\Tilde{N}^{\lfloor Mn\rfloor}_{i,x};\; x\in\Tilde{\mathcal{R}}^{\lfloor Mn\rfloor}_{i,x})))_{i\in\N^*}$ is distributed as the multi-type Galton-Watson forest $\mathcal{F}^1=((\mathcal{R}_{1,i},(\beta_i(x);\; x\in\mathcal{R}_{1,i})))_{i\in\N^*}$ which satisfies hypothesis \ref{H1}, \ref{H2} and \ref{H3} (see Fact \ref{GWF_Hyp}), implying that the process $(\frac{1}{\kappa_n}\sum_{i=1}^{\lfloor\alpha' n\rfloor}\sum_{x\in\Tilde{\mathcal{R}}^{\lfloor Mn\rfloor}_i}\Tilde{\mathcal{L}}_{i,x}^{\lfloor Mn\rfloor};\; \alpha'\geq 0)$ is distributed as $(\frac{1}{\kappa_n}F_{\lfloor\alpha' n\rfloor};\; \alpha'\geq 0)$ where $F_p:=\sum_{i=1}^{p}\sum_{x\in\mathcal{R}_{1,i}}(\beta_i(x)+\sum_{y;\; y^*=x}\beta_i(y))$, with the convention $F_0=0$. Hence, by Proposition \ref{PropArbresMulti}, together with Lemma \ref{RootNumber}, stating that 
\begin{align*}
\lim_{n\to\infty} \P^*\Big(\sup_{\alpha\in[0,M]}\Big|\frac{1}{n}B^{\lfloor\alpha n\rfloor}_{\ell_n}-\alpha W_{\infty}\Big|>\varepsilon_1\Big)=0,
\end{align*}
and the scaling property of $(\tau_{\alpha}(\VectCoord{\Tilde{Y}}{\kappa});\; \alpha\geq 0)$ (see \eqref{LaplaceTA}), we finally get the convergence of $(\frac{1}{\kappa_n}F_{B^{\lfloor\alpha n\rfloor}_{\ell_n}};\; \alpha\in[\delta,M])$ in law, under $\P^*$, towards the limit in Theorem \ref{Th1}, thus giving the wanted convergence.

\vspace{0.3cm}

\noindent\textbf{Convergence of $((\mathcal{L}^{\lfloor t\kappa_n\rfloor}/n;\; t\geq 0))_{n\in\N^*}$ under $\P^*$}

\vspace{0.2cm}

\noindent As we said before, it is enough to prove the convergence of $((\mathcal{L}^{\lfloor t\kappa_n\rfloor}/n;\; t\in[\delta',M']))_{n\in\N^*}$ for any $M'>0$ and all $\delta'\in(0,M')$. Moreover, note that under $\P^{\mathcal{E}}$, $\mathcal{L}^{m}=N^{m+1}$ where $N^{m+1}:=N^{m+1}_{e^*}=\sum_{k=1}^{m+1}\un_{\{X_{k-1}=e^*,\; X_k=e\}}$, so we will deal with the sequence of processes $((N^{\lfloor t\kappa_n\rfloor}/n;\; t\in[\delta',M']))_{n\in\N^*}$. 

\vspace{0.2cm}

\noindent We claim that the total amount of time spent by the random $\X$ on the oriented edge $(e^*,e)$ up to the time $\lfloor t\kappa_n\rfloor$ behaves like the number of vertices in $\mathcal{B}^{n^r}_{\ell_n}$ visited by the random walk $\Tilde{X}^{n^r}$ before $\lfloor t\kappa_n\rfloor$ for some $r>0$, or, equivalently, behaves like the index of the tree in the forest $\mathcal{R}^{n^r}$ to which the vertex $\Tilde{X}^{n^r}_{\lfloor t\kappa_n\rfloor}$ belongs. Indeed, introduce, for any $x\in\bigcup_{i=1}^{B^p_{\ell_n}}\T^p_i$, $\Tilde{T}^p_x:=\inf\{j\geq 0;\; \Tilde{X}^p_j=x\}$, let $m\in\N^*$ and denote by $\Tilde{R}^{n^r}_m:=\sum_{x\in\mathcal{B}^{n^r}_{\ell_n}}\un_{\{\Tilde{T}^{n^r}_x\leq m\}}$ the number of distinct elements of $\mathcal{B}^{n^r}_{\ell_n}$ visited by the random walk $\Tilde{X}^{n^r}$ before $m$. Let $t>0$, $d\in(0,\mathfrak{b})$ and introduce 
$\mathfrak{X}_{n}:=\sum_{|x|\leq(\log n)^3}\mathcal{L}_x^{\tau^{n^{\mathfrak{b}}}}$. Also define
\begin{align*}
    \mathfrak{B}^{n^{\mathfrak{b}}}_{\ell_n}(m):=\sup\{p\leq n^{\mathfrak{b}};\; B_{\ell_n}^p\leq m\}.
\end{align*}
One can see that on the event $\{\tau^{n^d}\leq\lfloor t\kappa_n\rfloor-\mathfrak{X}_n,\; \tau^{n^{\mathfrak{b}}}>\lfloor t\kappa_n\rfloor\}$, we have $N^{\lfloor t\kappa_n\rfloor}\leq \mathfrak{B}^{n^{\mathfrak{b}}}_{\ell_n}(\Tilde{R}^{n^{\mathfrak{b}}}_{\lfloor t\kappa_n\rfloor}-1)$ and since $\sum_{x\in C^{n^{\mathfrak{b}}}_{\ell_n}}\mathcal{L}_x^{\tau^{n^{\mathfrak{b}}}}\leq\mathfrak{X}_{n}$ (assuming that we are on the event $\{(\mathcal{B}^p_{\ell_n})_{p\leq n^{\mathfrak{b}}}\textrm{ is non-decreasing,}$ $\mathcal{B}^{n^d}_{\ell_n}\not=\varnothing,\; \sup_{x\in C^{n^{d}}_{\ell_n}}|x|\leq(\log n)^3\}$, whose quenched probability goes to $1$ as $n\to\infty$ thanks to Lemma \ref{NDSet} and Lemma \ref{PropSetRoots1}), we have $N^{\lfloor t\kappa_n\rfloor}\geq  \mathfrak{B}^{n^{\mathfrak{b}}}_{\ell_n}(\Tilde{R}^{n^{\mathfrak{b}}}_{\lfloor t\kappa_n\rfloor-\mathfrak{X}_{n}}-1)$. Note that we can choose $d$ and $\mathfrak{b}$ such that $0<d<1<\mathfrak{b}<\kappa\land 2$ and satisfying $\lim_{n\to\infty}\P^{\mathcal{E}}(\forall t\in[\delta',M'],\; \tau^{n^d}\leq\lfloor t\kappa_n\rfloor-\mathfrak{X}_n,\; \tau^{n^{\mathfrak{b}}}>\lfloor t\kappa_n\rfloor)=1$ (see the proof of \eqref{ProbaTA1}) so for any $\varepsilon_1>0$
\begin{align}\label{ApproxLocalTime}
    \lim_{n\to\infty}\P^{\mathcal{E}}\Big(\forall t\in[\delta',M'],\;\frac{1}{n} \mathfrak{B}^{n^{\mathfrak{b}}}_{\ell_n}(\Tilde{R}^{n^{\mathfrak{b}}}_{\lfloor t\kappa_n\rfloor-\mathfrak{X}_{n}}-1)-\varepsilon_1\leq\frac{1}{n}N^{\lfloor t\kappa_n\rfloor}\leq\frac{1}{n}\mathfrak{B}^{n^{\mathfrak{b}}}_{\ell_n}(\Tilde{R}^{n^{\mathfrak{b}}}_{\lfloor t\kappa_n\rfloor}-1)+\varepsilon_1\Big)=1.
\end{align}
Since $\mathfrak{b}<\kappa\land 2$, we have $\lim_{n\to\infty}\E^{\mathcal{E}}[\mathfrak{X}_n/\kappa_n]=0$. Hence, $\mathfrak{X}_n$ does not have any influence so we will no longer refer to it. Now, similarly to the proof of the convergence of $((T^{\lfloor\alpha n\rfloor}/\kappa_n;\; \alpha\geq 0))_{n\in\N^*}$, we have, under $\P$, that $(\Tilde{R}^{n^{\mathfrak{b}}}_{\lfloor t\kappa_n\rfloor}-1;\; t\in[\delta',M'])$ is distributed as $(\Bar{F}^{n^{\mathfrak{b}}}(\lfloor t\kappa_n\rfloor);\; t\in[\delta',M'])$ where $\Bar{F}^{n^{\mathfrak{b}}}(\lfloor t\kappa_n\rfloor):=B^{n^{\mathfrak{b}}}_{\ell_n}\land\Bar{F}(\lfloor t\kappa_n\rfloor)$, we recall that $\Bar{F}(\lfloor t\kappa_n\rfloor)=\sup\{p\in\N;\; F_p\leq\lfloor t\kappa_n\rfloor\}$ with $F_q=\sum_{i=1}^{q}\sum_{x\in\mathcal{R}_{1,i}}(\beta_i(x)+\sum_{y;\; y^*=x}\beta_i(y))$, $F_0=0$ and also recall that $\mathcal{F}^1=((\mathcal{R}_{1,i},(\beta_i(x);\; x\in\mathcal{R}_{1,i})))_{i\in\N^*}$, which satisfies hypothesis \ref{H1}, \ref{H2} and \ref{H3}, see Fact \ref{GWF_Hyp}. \\
Finally, note that $(\mathfrak{B}^{n^{\mathfrak{b}}}_{\ell_n}(m))_{m\in\N^*}$ is non-decreasing so thanks to Lemma \ref{RootNumber}, together with the fact that $\mathfrak{b}>1$, we obtain that $((\mathfrak{B}^{n^{\mathfrak{b}}}_{\ell_n}(\lfloor tn\rfloor)/n;\; t\geq 0))_{m\in\N^*}$ converges in law to $(t/W_{\infty};\; t\geq 0)$. Therefore, by \eqref{ApproxLocalTime} and thanks to Proposition \ref{PropArbresMulti}, together with the Skorokhod's representation theorem, we obtain the result.

\vspace{0.3cm}

\noindent Note that the joint convergence in law, under $\P^*$, of $((T^{\lfloor\alpha n\rfloor}/\kappa_n;\; \alpha\geq 0),(\mathcal{L}^{\lfloor t\kappa_n\rfloor}/n;\; t\geq 0))_{n\in\N^*}$ follows immediately. 

\vspace{0.3cm}

\noindent\textbf{Convergence of $((T^{\lfloor\alpha n\rfloor}/\kappa_n;\; \alpha\geq 0),(\mathcal{L}^{\lfloor t\kappa_n\rfloor}/n;\; t\geq 0))_{n\in\N^*}$ under $\P^{\mathcal{E}}$}

\vspace{0.1cm}

\noindent The proof is similar to the ones of Theorem 1.1 and Theorem 6.1 in \cite{AidRap} and of Theorem 1 in \cite{deRaph1}. First, note that thanks to \eqref{ProbaTA1} and \eqref{ApproxLocalTime}, together with the fact that $\sum_{i=1}^{B^{\lfloor\alpha n\rfloor}_{\ell_n}}\sum_{x\in\mathcal{R}^{\lfloor\alpha n\rfloor}_i}\mathcal{L}_{i,x}^{\lfloor\alpha n\rfloor}=\sum_{i=1}^{B^{\lfloor\alpha n\rfloor}_{\ell_n}}\sum_{x\in\Tilde{\mathcal{R}}^{n^{\mathfrak{b}}}_i}\Tilde{\mathcal{L}}_{i,x}^{n^{\mathfrak{b}}}$ (which is also equal to the time spent by the random walk $\Tilde{X}^{n^{\mathfrak{b}}}$ in $\cup_{i=1}^{B^{\lfloor\alpha n\rfloor}_{\ell_n}}\T^p_i$), it is enough to show the quenched convergence of the sequence of processes 
$\mathbb{B}=((\mathbb{B}_n)_{n\in\N^*}$ where $\mathbb{B}_n=(\frac{1}{\kappa_n}\sum_{i=1}^{B^{\lfloor\alpha n\rfloor}_{\ell_n}}\sum_{x\in\Tilde{\mathcal{R}}^{n^{\mathfrak{b}}}_i}\Tilde{\mathcal{L}}_{i,x}^{n^{\mathfrak{b}}};\; \alpha\in[\delta,M]),(\frac{1}{n}\mathfrak{B}^{n^{\mathfrak{b}}}_{\ell_n}(\Tilde{R}^{n^{\mathfrak{b}}}_{\lfloor t\kappa_n\rfloor}-1);\; t\in[\delta',M']))$. For that, take a continuous and bounded function $F:D([\delta,M])\times D([\delta',M'])\to\R$ and let us show that $\Pb$-almost surely, $(\E^{\mathcal{E}}[F(\mathbb{B}_n)]-\E[F(\mathbb{B}_n)])$ converges towards $0$ as $n\to\infty$. This will be enough since we have just proved the convergence of $(\E[F(\mathbb{B}_n)])_n$. \\ 
First, note that we can restrict ourselves to the case $B^{n^{\mathfrak{b}}}_{\ell_n}\leq n^2$ and to the vertices $z$ in generation $\ell_n$ such that $V(z)\geq\mathfrak{z}\ell_n$ for some constant $\mathfrak{z}>0$ only depending on the function $\psi$ (recall that we obtain in Lemma \ref{RootNumber} a more precise annealed approximation of $B^{n^{\mathfrak{b}}}_{\ell_n}$ but here, we need one under the quenched probability and it does not require such precision). Indeed, thanks to the Markov inequality
\begin{align*}
    \P^{\mathcal{E}}\big(B^{n^{\mathfrak{b}}}_{\ell_n}>n^2\big)\leq\frac{1}{n^2}\E^{\mathcal{E}}\big[B^{n^{\mathfrak{b}}}_{\ell_n}\un_{\{\sup_{x\in\mathcal{B}^{n^{\mathfrak{b}}}_{(\log n)^2}}|x|\leq(\log n)^3\}}\big]+\P^{\mathcal{E}}\big(\sup_{x\in\mathcal{B}^{n^{\mathfrak{b}}}_{(\log n)^2}}|x|>(\log n)^3\big),
\end{align*}
and, by definition, $B^{n^{\mathfrak{b}}}_{\ell_n}\un_{\{\sup_{x\in\mathcal{B}^{n^{\mathfrak{b}}}_{(\log n)^2}}|x|\leq(\log n)^3\}}\leq\sum_{|x|\leq(\log n)^3}N_x^{T^{n^{\mathfrak{b}}}}$ so
\begin{align*}
    \P^{\mathcal{E}}\big(B^{n^{\mathfrak{b}}}_{\ell_n}>n^2\big)\leq\frac{1}{n^{2-\mathfrak{b}}}\sum_{k\leq(\log n)^3}W_k+\P^{\mathcal{E}}\big(\sup_{x\in\mathcal{B}^{n^{\mathfrak{b}}}_{(\log n)^2}}|x|>(\log n)^3\big),
\end{align*}
which, recalling that $\mathfrak{b}<\kappa\land 2$, goes to $0$, $\Pb$-almost surely (see Lemma \ref{PropSetRoots1} \ref{Fact2.2}). For the restriction to the vertices in generation $\ell_n$ with high potential, let $\mathfrak{z}:=|\psi(\mathfrak{h})|/2\mathfrak{h}$ (where $\mathfrak{h}>0$ is chosen such that $\psi(\mathfrak{h})<0$, which is possible since $\psi'(1)<0$) and note that
\begin{align*}
    \P\big(\exists\; |z|=\ell_n;\; V(z)<\mathfrak{z}\ell_n\big)\leq\Eb\Big[\sum_{|z|=\ell_n}\un_{\{V(z)<\mathfrak{z}\ell_n\}}\Big]\leq\Eb\Big[\sum_{|z|=\ell_n}e^{\mathfrak{h}\mathfrak{z}\ell_n-\mathfrak{h}|\psi(\mathfrak{h})|}\Big]=e^{-\ell_n|\psi(\mathfrak{h})|/2},
\end{align*}
and recalling $\ell_n=(\log n)^2$, we obtain that $\sum_{n\in\N}\P(\exists\; |z|=\ell_n;\; V(z)<\mathfrak{z}\ell_n)<\infty$, thus giving the convergence towards $0$ of $\P^{\mathcal{E}}(\exists\; |z|=\ell_n;\; V(z)<\mathfrak{z}\ell_n)$, $\Pb$-almost surely. \\
Considering what we have just showed, we are left to prove that $(\E^{\mathcal{E}}[F(\mathbb{B}_n)\un_{\{B^{n^{\mathfrak{b}}}_{\ell_n}\leq n^2,\min_{|z|=\ell_n}V(z)\geq\mathfrak{z}\ell_n\}}]-\E[F(\mathbb{B}_n)])$ goes to $0$, $\Pb$-almost surely, as $n\to\infty$. For that, define $\mathcal{A}_n$ to be the subset of $(\bigcup_{k>\ell_n}\N^k)^2$ such that $(U_1,U_2)\in\mathcal{A}_n$ if and only if for any $(x,y)\in(U_1,U_2)$, neither $x$ is an ancestor of $y$, nor $y$ is an ancestor of $x$ and introduce $\mathcal{C}_n:=\{(U_1,U_2)\in(\bigcup_{k>\ell_n}\N^k)^2
\setminus\mathcal{A}_n;\;\forall\; i\in\{1,2\},\;\#U_i\leq n^2,\;\forall\; x\in U_i:\; V(x_{\ell_n})\geq\mathfrak{z}\ell_n\}$. Note that $\Eb[\E^{\mathcal{E}}[F(\mathbb{B}_n)\un_{\{B^{n^{\mathfrak{b}}}_{\ell_n}\leq n^2,\min_{|z|=\ell_n}V(z)\geq\mathfrak{z}\ell_n\}}]^2]$ is smaller than
\begin{align*}
    \Eb\Big[\Big(\sum_{(U_1,U_2)\in\mathcal{A}_n}+\sum_{(U_1,U_2)\in\mathcal{C}_n}\Big)\E^{\mathcal{E}}[F(\mathbb{B}_n)\un_{\{\mathcal{B}^{n^{\mathfrak{b}}}_{\ell_n}=U_1\}}]\times\E^{\mathcal{E}}[F(\mathbb{B}_n)\un_{\{\mathcal{B}^{n^{\mathfrak{b}}}_{\ell_n}=U_2\}}]\Big].
\end{align*}
One can see that 
\begin{align*}
    &\sum_{(U_1,U_2)\in\mathcal{A}_n}\Eb\Big[\E^{\mathcal{E}}\big[F(\mathbb{B}_n)\un_{\{\mathcal{B}^{n^{\mathfrak{b}}}_{\ell_n}=U_1\}}\big]\times\E^{\mathcal{E}}\big[F(\mathbb{B}_n)\un_{\{\mathcal{B}^{n^{\mathfrak{b}}}_{\ell_n}=U_2\}}\big]\Big] \\ & =\sum_{(U_1,U_2)\in\mathcal{A}_n}\Eb\Big[\E^{\mathcal{E}}\big[F(\mathbb{B}_n)\big|\mathcal{B}^{n^{\mathfrak{b}}}_{\ell_n}=U_1\big]\E^{\mathcal{E}}\big[F(\mathbb{B}_n)\big|\mathcal{B}^{n^{\mathfrak{b}}}_{\ell_n}=U_2\big]\P^{\mathcal{E}}\big(\mathcal{B}^{n^{\mathfrak{b}}}_{\ell_n}=U_1\big)\P^{\mathcal{E}}\big(\mathcal{B}^{n^{\mathfrak{b}}}_{\ell_n}=U_2\big)\Big] \\ & =\Eb\Big[\P^{\mathcal{E}}\big(\mathcal{B}^{n^{\mathfrak{b}}}_{\ell_n}\in\mathcal{A}_n\big)^2\Big]\E[F(\mathbb{B}_n)]^2\leq\E[F(\mathbb{B}_n)]^2,
\end{align*}
where the second equality comes from that, for any $(U_1,U_2)\in\mathcal{A}_n$, by definition of the random walk $\Tilde{X}^{n^{\mathfrak{b}}}$, the random variables $\E^{\mathcal{E}}[F(\mathbb{B}_n)\big|\mathcal{B}^{n^{\mathfrak{b}}}_{\ell_n}=U_1]$ and $\E^{\mathcal{E}}[F(\mathbb{B}_n)\big|\mathcal{B}^{n^{\mathfrak{b}}}_{\ell_n}=U_2]$ are independent under $\Pb$ with common expectation $\E[F(\mathbb{B}_n)]$ and are also independent of $\P^{\mathcal{E}}(\mathcal{B}^{n^{\mathfrak{b}}}_{\ell_n}=U_1)\P^{\mathcal{E}}(\mathcal{B}^{n^{\mathfrak{b}}}_{\ell_n}=U_2)$. We also have
\begin{align*}
    &\Eb\Big[\sum_{(U_1,U_2)\in\mathcal{C}_n}\E^{\mathcal{E}}\big[F(\mathbb{B}_n)\un_{\{\mathcal{B}^{n^{\mathfrak{b}}}_{\ell_n}=U_1\}}\big]\times\E^{\mathcal{E}}\big[F(\mathbb{B}_n)\un_{\{\mathcal{B}^{n^{\mathfrak{b}}}_{\ell_n}=U_2\}}\big]\Big]\leq\|F\|_{\infty}^2 \\ & \times\sum_{\underset{\# U_1\leq n^2}{U_1\subset\cup_{k>\ell_n}\N^k}}\Eb\Big[\P^{\mathcal{E}}\big(\mathcal{B}^{n^{\mathfrak{b}}}_{\ell_n}=U_1\big)\sum_{U_2\subset\cup_{k>\ell_n}\N^k}\un_{\{\min_{x\in U_2} V(x_{\ell_n})\geq\mathfrak{z}\ell_n,\;\exists\;(u_1,u_2)\in U_1\times U_2:\; u_2\leq u_1\}}\P^{\mathcal{E}}\big(\mathcal{B}^{n^{\mathfrak{b}}}_{\ell_n}=U_2\big)\Big].
\end{align*}
Note that
\begin{align*}
    &\sum_{U_2\subset\cup_{k>\ell_n}\N^k}\un_{\{\min_{x\in U_2} V(x_{\ell_n})\geq\mathfrak{z}\ell_n,\;\exists\;(u_1,u_2)\in U_1\times U_2:\;u_2\leq u_1\}}\P^{\mathcal{E}}\big(\mathcal{B}^{n^{\mathfrak{b}}}_{\ell_n}=U_2\big) \\ & \leq\sum_{x\in U_1}\un_{\{V(x_{\ell_n})\geq\mathfrak{z}\ell_n\}}\sum_{U_2\subset\cup_{k>\ell_n}\N^k}\P^{\mathcal{E}}\big(\mathcal{B}^{n^{\mathfrak{b}}}_{\ell_n}=U_2,\; N_{x_{\ell_n}}^{\tau^{n^{\mathfrak{b}}}}\geq 1\big) \\ & \leq\sum_{x\in U_1}\un_{\{V(x_{\ell_n})\}\geq\mathfrak{z}\ell_n\}}\P^{\mathcal{E}}\big(N_{x_{\ell_n}}^{\tau^{n^{\mathfrak{b}}}}\geq 1\big). 
\end{align*}
Moreover, by Lemma \ref{LemmaEdge}, $\sum_{x\in U_1}\un_{\{V(x_{\ell_n})\}\geq\mathfrak{z}\ell_n\}}\P^{\mathcal{E}}(N_{x_{\ell_n}}^{\tau^{n^{\mathfrak{b}}}}\geq 1)\leq\sum_{x\in U_1}\un_{\{V(x_{\ell_n})\}\geq\mathfrak{z}\ell_n\}}e^{-V(x_{\ell_n})}$ which is smaller than $\#U_1e^{-\mathfrak{z}\ell_n}$.
Hence
\begin{align*}
    \Eb\Big[\sum_{(U_1,U_2)\in\mathcal{C}_n}\E^{\mathcal{E}}\big[F(\mathbb{B}_n)\un_{\{\mathcal{B}^{n^{\mathfrak{b}}}_{\ell_n}=U_1\}}\big]\times\E^{\mathcal{E}}\big[F(\mathbb{B}_n)\un_{\{\mathcal{B}^{n^{\mathfrak{b}}}_{\ell_n}=U_2\}}\big]\Big]&\leq\|F\|_{\infty}^2\sum_{\underset{\# U_1\leq n^2}{U_1\subset\cup_{k>\ell_n}\N^k}}\#U_1e^{-\mathfrak{z}\ell_n}\P\big(\mathcal{B}^{n^{\mathfrak{b}}}_{\ell_n}=U_1\big) \\ & \leq\|F\|_{\infty}^2 n^2e^{-\mathfrak{z}\ell_n},
\end{align*}
thus giving $\Eb[\sum_{(U_1,U_2)\in\mathcal{C}_n}\E^{\mathcal{E}}[F(\mathbb{B}_n)\un_{\{\mathcal{B}^{n^{\mathfrak{b}}}_{\ell_n}=U_1\}}]\times\E^{\mathcal{E}}[F(\mathbb{B}_n)\un_{\{\mathcal{B}^{n^{\mathfrak{b}}}_{\ell_n}=U_2\}}]]\leq n^2e^{-\mathfrak{z}\ell_n}$. It follows that the sum $\sum_{n\in\N}\Pb((\E^{\mathcal{E}}[F(\mathbb{B}_n)\un_{\{B^{n^{\mathfrak{b}}}_{\ell_n}\leq n^2,\min_{|z|=\ell_n}V(z)\geq\mathfrak{z}\ell_n\}}]-\E[F(\mathbb{B}_n)])>\varepsilon_1)$ is finite for any $\varepsilon_1>0$ and the proof is completed.
\end{proof}

\noindent We now turn to the proof of Theorem \ref{Th3}.

\begin{proof}[Proof of Theorem \ref{Th3}]
Without loss of generality, let us prove that for any $\varepsilon>0$
\begin{align*}
    \P^*\Big(\sup_{1\leq p\leq n}\Big|R_{\tau^p}-\frac{\bm{c}_{\infty}}{2}\tau^p\Big|>\varepsilon\kappa_n\Big)\underset{n\to\infty}{\longrightarrow}0.
\end{align*}
First, note that $R_{\tau^p}\leq\tau^p$ so for any $\delta\in(0,1)$
\begin{align*}
    \P^*\Big(\sup_{1\leq p\leq\lfloor\delta n\rfloor}\Big|R_{\tau^p}-\frac{\bm{c}_{\infty}}{2}\tau^p\Big|>\varepsilon\kappa_n\Big)\leq\P^*\big(\tau^{\lfloor\delta n\rfloor}>\varepsilon\kappa_n/2\big)+\P^*\big(\tau^{\lfloor\delta n\rfloor}>\varepsilon\kappa_n/\bm{c}_{\infty}\big),
\end{align*}
which gives $\lim_{\delta\to 0}\limsup_{n\to\infty}\P^*(\sup_{1\leq p\leq\lfloor\delta n\rfloor}|R_{\tau^p}-\frac{\bm{c}_{\infty}}{2}\tau^p|>\varepsilon\kappa_n)=0$. Now, similarly to \eqref{ProbaTA1}, we have
\begin{align*}
    \sup_{p\in\{\lfloor\delta n\rfloor,\ldots,n\}}\Big|R_{\tau^p}-\sum_{i=1}^{B_{\ell_n}^p}\sum_{x\in\Tilde{\mathcal{R}} ^{p}_i}1\Big|\leq\sum_{x\in C^{\lfloor\delta n\rfloor}_{\ell_n}}N_x^{\tau^{\lfloor\delta n\rfloor}},
\end{align*}
so
\begin{align*}
    \lim_{n\to\infty}\P^{\mathcal{E}}\Big(\sup_{p\in\{\lfloor\delta n\rfloor,\ldots,n\}}\Big|R_{\tau^p}-\sum_{i=1}^{B_{\ell_n}^p}\sum_{x\in\Tilde{\mathcal{R}} ^{p}_i}1\Big|>\varepsilon\kappa_n\Big)=0,
\end{align*}
and thanks to \eqref{ProbaTA1}
\begin{align*}
     &\limsup_{n\to\infty}\P^*\Big(\sup_{p\in\{\lfloor\delta n\rfloor,\ldots,n\}}\Big|R_{\tau^p}-\frac{\bm{c}_{\infty}}{2}\tau^p\Big|>\varepsilon\kappa_n\Big) \\ & \leq\limsup_{n\to\infty}\P^*\Big(\sup_{p\in\{\lfloor\delta n\rfloor,\ldots,n\}}\Big|\sum_{i=1}^{B_{\ell_n}^p}\sum_{x\in\Tilde{\mathcal{R}}^{n}_i}1-\frac{\bm{c}_{\infty}}{2}\sum_{i=1}^{B_{\ell_n}^p}\sum_{x\in\Tilde{\mathcal{R}} ^{n}_i}\Tilde{\mathcal{L}}^{n}_{i,x}\Big|>\varepsilon\kappa_n\Big).
\end{align*}
Moreover, splitting the probability according to the value of $B^p_{\ell_n}$, we have, for any $\delta',M>0$
\begin{align*}
    &\P^*\Big(\sup_{p\in\{\lfloor\delta n\rfloor,\ldots,n\}}\Big|\sum_{i=1}^{B_{\ell_n}^p}\sum_{x\in\Tilde{\mathcal{R}}^{n}_i}1-\frac{\bm{c}_{\infty}}{2}\sum_{i=1}^{B_{\ell_n}^p}\sum_{x\in\Tilde{\mathcal{R}} ^{n}_i}\Tilde{\mathcal{L}}^{n}_{i,x}\Big|>\varepsilon\kappa_n\Big) \\ & \leq\P^*\Big(\sup_{q\in\{\lfloor\delta' n\rfloor,\ldots,\lfloor nM\rfloor\}}\Big|\sum_{i=1}^{q}\sum_{x\in\Tilde{\mathcal{R}}^{n}_i}1-\frac{\bm{c}_{\infty}}{2}\sum_{i=1}^{q}\sum_{x\in\Tilde{\mathcal{R}} ^{n}_i}\Tilde{\mathcal{L}}^{n}_{i,x}\Big|>\varepsilon\kappa_n\Big) \\ & +\P^*\big(\exists\; p\in\{\lfloor\delta n\rfloor,\ldots,n\}:\; B^p_{\ell_n}\not\in\{\lfloor\delta' n\rfloor,\ldots,\lfloor Mn\rfloor\}\big).
\end{align*}
Finally, note that $((\sum_{i=1}^{q}\sum_{x\in\Tilde{\mathcal{R}} ^{n}_i}1)_{q\in\{\lfloor\delta' n\rfloor,\ldots,\lfloor Mn\rfloor\}},(\sum_{i=1}^{q}\sum_{x\in\Tilde{\mathcal{R}}^{n}_i}\Tilde{\mathcal{L}}^{n}_{i,x})_{q\in\{\lfloor\delta' n\rfloor,\ldots,\lfloor Mn\rfloor\}})$ is, under $\P$, distributed as $((\sum_{i=1}^{q}\sum_{x\in\mathcal{R}_{1,i}}1)_{p\in\{\lfloor\delta n\rfloor,\ldots,n\}},(F_{q})_{p\in\{\lfloor\delta n\rfloor,\ldots,n\}})$ where we recall that for any $q\in\N$, $F_q=\sum_{i=1}^{q}\sum_{x\in\mathcal{R}_{1,i}}(\beta_i(x)+\sum_{y;\; y^*=x}\beta_i(y))$ and $\mathcal{F}^1=((\mathcal{R}_{1,i},(\beta_i(x);\; x\in\mathcal{R}_{1,i})))_{i\in\N^*}$ satisfies hypothesis \ref{H1}, \ref{H2} and \ref{H3}, see Fact \ref{GWF_Hyp} 
Moreover, $\E[\sum_{x\in\T}\un_{\{\min_{e<z<x}N_z^{\tau^1}\geq 2\}}]=\bm{c}_{\infty}/C_{\infty}$ thanks to \cite{AidRap} and \cite{deRaph1}. Hence, Proposition \ref{PropArbresMulti}, together with Lemma \ref{RootNumber} by taking $\delta'$ small enough and $M$ large enough allow to complete the proof.
\end{proof}

\vspace{0.2cm}

\noindent Finally, we end this section with the proof of Corollary \ref{Coro1}, providing an equivalent of the probability $\P^{\mathcal{E}}(X_{2n+1}=e^*)$ when $n\to\infty$.

\begin{proof}[Proof of Corollary \ref{Coro1}]
The proof follows a similar approach as the one of Corollary 1.4 in \cite{Hu2017Corrected}. We first prove that $(\mathcal{L}^n/\gamma_n)_{n\in\N^*}$ is uniformly integrable under $\P^{\mathcal{E}}$ (recalling that $\gamma_n=n^{1/(\kappa\land 2)}$ if $\kappa\not=2$ and $\gamma_n=\sqrtBis{(n\log n)}$ if $\kappa=2$), so that Theorem \ref{Th2} provides the convergence of $(\E^{\mathcal{E}}[\mathcal{L}^n/\gamma_n])_{n\in\N^*}$. For that, it is enough to show that $\sup_{n\in\N^*}\E^{\mathcal{E}}[(\mathcal{L}^n/\gamma_n)^d]<\infty$, with $d\in(1,\infty)$. Note that for any $n\in\N^*$ and $\varepsilon\in(0,1)$, we have 
\begin{align*}
    \E^{\mathcal{E}}\big[(\mathcal{L}^n)^d\big]\leq(\gamma_n/\varepsilon)^d+\sum_{k\geq\lfloor\gamma_n/\varepsilon\rfloor}dk^{d-1}\P^{\mathcal{E}}\big(\mathcal{L}^n\geq k\big)=(\gamma_n/\varepsilon)^d+\sum_{k\geq\lfloor\gamma_n/\varepsilon\rfloor}dk^{d-1}\P^{\mathcal{E}}\big(T^k\leq n\big),
\end{align*}
and thanks to the strong Markov property, $\P^{\mathcal{E}}(T^k\leq n)\leq\P^{\mathcal{E}}(T^{\lfloor\gamma_n/\varepsilon\rfloor}\leq n)^{\lfloor k/\lfloor\gamma_n/\varepsilon\rfloor\rfloor}$, for any integer $k\geq\lfloor\gamma_n/\varepsilon\rfloor$. Thanks to Theorem \ref{Th1}, we can find $\varepsilon_0,c_0\in(0,1)$ and $n_0\in\N^*$ such that for all $n\geq n_0$, $\P^{\mathcal{E}}(T^{\lfloor\gamma_n/\varepsilon_0\rfloor}\leq n)\leq 1-c_0$ and this yields, for any $n\geq n_0$
\begin{align*}
    \E^{\mathcal{E}}\big[(\mathcal{L}^n)^d\big]\leq C_{d,\varepsilon_0}(\gamma_n)^d\sum_{k\geq 0}dk^{d-1}(1-c_0)^k\leq C'_{d,\varepsilon_0}(\gamma_n)^d,
\end{align*}
for some positive constants $C_{d,\varepsilon_0}$ and $C'_{d,\varepsilon_0}$. Then, by Theorem \ref{Th2} and \eqref{LaplaceMittag-Leffler} \\
\begin{align*}
    \E^{\mathcal{E}}\big[\mathcal{L}^{2n+1}\big]\sim_{n\to\infty}\left \{
   \begin{array}{l c l}
      \sqrtBis{n}\frac{1}{\Gamma(1+1/2)}\frac{\sqrtBis{(2c_0)}}{W_{\infty}}  & \text{if} & \kappa>2 \\ \\
       \sqrtBis{(n\log n)}\frac{1}{\Gamma(1+1/2)}\frac{1}{W_{\infty}}\sqrtBis{(C_{\infty}c_2)} & \text{if} & \kappa=2 \\ \\
       n^{1/\kappa}\frac{1}{\Gamma(1+1/\kappa)}\frac{1}{W_{\infty}}(C_{\infty}c_{\kappa}|\Gamma(1-\kappa)|)^{1/\kappa}  & \text{if} & \kappa\in(1,2).
   \end{array}
   \right .
\end{align*}
Then, note that $\E^{\mathcal{E}}[\mathcal{L}^{2n+1}]=\sum_{k=0}^{n}\P^{\mathcal{E}}(X_{2k+1}=e^*)$ and for all $s\in[0,1)$, $\sum_{k\geq 0}s^k\P^{\mathcal{E}}(X_{2k+1}=e^*)<\infty$. Besides, one can notice that $\P^{\mathcal{E}}(X_{2k+1}=e^*)=\P_{e^*}^{\mathcal{E}}(X_{2k+2}=e^*)$ so the function $k\in\N\mapsto\P^{\mathcal{E}}(X_{2k+1}=e^*)$ is non-decreasing. Hence, a Tauberian theorem (see \cite{Feller2}, Theorem 5) leads to the result and the proof is completed.
\end{proof}

\section{Proof of Proposition \ref{PropArbresMulti}}\label{ProofProp}

This section is dedicated to the proof of Proposition \ref{PropArbresMulti}. 
Recall that  $\mathcal{F}=((\mathcal{T}_i,(\beta(x);x\in\mathcal{T}_i));i\in\N^*)$ is a multi-type Galton-Watson forest (with infinitely many types $(\beta(x);\; x\in\mathcal{T}_i)$, $\beta(x)\in\N^*$ and $\mathcal{T}_i$ is rooted at $\VectCoord{e}{i}$) such that 
\begin{enumerate}[start=1,label={(\bfseries H\arabic*)}]
    \item $\Pbis$-almost surely, $\forall\; i\in\N^*$, $\beta(\VectCoord{e}{i})=1$;
    \item $\Ebis[\sum_{x\in\mathcal{T}_1}\un_{\{G^1_{1}(x)=1,\; \beta(x)=1\}}]=1$ where $G^1_i(x)$ denotes the number of vertices of type 1 in genealogical line of $x$:
    \begin{align*}
        \forall\; u\in\mathcal{T}_i\setminus\{\VectCoord{e}{i}\},\;G^1_i(u)=\sum_{\VectCoord{e}{i}\leq z<u}\un_{\{\beta(z)=1\}}\;\;\textrm{ and }\;\; G_i^1(\VectCoord{e}{i})=0;
    \end{align*}
    \item $\Ebis[\sum_{x\in\mathcal{T}_1}\beta(x)\un_{\{G^1_1(x)=1\}}]=\nu<\infty$.
\end{enumerate}
\noindent Also recall that for any $p\in\N^*$, $\mathcal{F}_p:=(\mathcal{T}_i)_{i\in\{1,\ldots,p\}}$ is the family composed of the first $p$ trees of the forest $\mathcal{F}$ and for any $x\in\mathcal{T}_i$, $\beta^{\star}(x):=\beta(x)+\sum_{y\in\mathcal{T}_i;\; y^*=x}\beta(y)$. Also recall that $F_p=\sum_{x\in\mathcal{F}_p}\beta^{\star}(x)$ and $\Bar{F}(m):=\sup\{p\in\N^*;\; F_{p}\leq m\}$.

\vspace{0.3cm}

\noindent Before proving Proposition \ref{PropArbresMulti}, let us introduce some definitions and notations. Following the idea of L. de Raphélis \cite{deRaphMulti} and E. Aïdékon and L. de Raphélis \cite{AidRap}, we construct, starting from the tree $(\mathcal{T}_i,(\beta(x);\; x\in\mathcal{T}_i))$ with $i\in\N^*$, a new tree $(\mathfrak{T}_i,(\Type{x};\; x\in\mathfrak{T}_i))$, with $\Type{x}\in\{0,1\}$ and composed of $\sum_{x\in\mathcal{T}_i}\beta^{\star}(x)$ vertices. It needs two steps. First, we build the skeleton $(\Tilde{\mathcal{T}}_i,(\bm{\beta}(x);\; x\in\Tilde{\mathcal{T}}_i))$, with $\bm{\beta}(x)\in\{0,1\}\times\N^*$, of $(\mathfrak{T}_i,(\Type{x};\; x\in\mathfrak{T}_i))$ as follows
\begin{itemize}
    \item $\Tilde{\mathcal{T}}_i$ is a tree rooted at $\VectCoord{e}{i}$;
    \item as $\mathcal{T}_i$, the tree $\Tilde{\mathcal{T}}_i$ is composed of $\#\mathcal{T}_i$ vertices. For any $j\in\{0,\ldots,\#\mathcal{T}_i-1\}$, if $u(j)$ denotes the $(j+1)$-th vertex of $\mathcal{T}_i$ and $v(j)$ the $(j+1)$-th vertex of $\Tilde{\mathcal{T}}_i$ (for the depth-first search order), then the generation of $v(j)$ is given by $G^1_i(u(j))$;
    \item for any $j\in\{0,\ldots,\#\mathcal{T}_i-1\}$, if $\beta(u(j))\not=1$, then $v(j)$ is a leaf of the tree $\Tilde{\mathcal{T}}_i$, meaning it has no descendant. We then assign a new type to $v(j)$: $\bm{\beta}(v(j)):=(0,\beta^{\star}(u(j)))$. Otherwise, $\beta(u(j))=1$ and $v(j)$ inherits from the type of $u(j)$: $\bm{\beta}(v(j))=(1,\beta^{\star}(u(j)))$.
\end{itemize}
The skeleton $(\Tilde{\mathcal{T}}_i,(\bm{\beta}(x);\; x\in\Tilde{\mathcal{T}}_i))$ has been built, we can now move to $(\mathfrak{T}_i,(\Type{x};\; x\in\mathfrak{T}_i))$. First, for any $x\in\Tilde{\mathcal{T}}_i$, let $\bm{\beta}(x):=(\VectCoord{\beta}{1}(x),\VectCoord{\beta}{2}(x))\in\{0,1\}\times\N^*$. Then
\begin{itemize}
    \item $\mathfrak{T}_i$ is a tree rooted at $\VectCoord{e}{i}$ and $\Tilde{\mathcal{T}}_i$ is a sub-tree of $\mathfrak{T}_i$. For any $x\in\Tilde{\mathcal{T}}_i\cap\mathfrak{T}_i$, $\Type{x}:=\VectCoord{\beta}{1}(x)$.
    \item Let $x\in\Tilde{\mathcal{T}}_i\cap\mathfrak{T}_i$ such that $\mathfrak{t}(x)=1$. We add $\VectCoord{\beta}{2}(x)-1$ new vertices to the set of children of $x$ and the type $\Type{y}$ of each vertex $y$ among these new children is equal to $0$. Moreover, for any vertex $x\in\Tilde{\mathcal{T}}_i\cap\mathfrak{T}_i$ such that $\mathfrak{t}(x)=0$, we add $\beta^{2}(x)-1$ new vertices to the set of children of $x^*$ (the parent of $x$) and the type $\Type{y}$ of each vertex $y$ among these new children is equal to $0$. By definition, each new vertex we added is a leaf of $\mathfrak{T}_i$.
\end{itemize}
Note that the total number $\#\mathfrak{T}_i$ of vertices of the tree $\mathfrak{T}_i$ is equal to $\sum_{x\in\Tilde{\mathcal{T}}_i}\VectCoord{\beta}{2}(x)$, which is nothing but $\sum_{x\in\mathcal{T}_i}\beta^{\star}(x)$. Also, $\mathcal{F}$ is a multi-type Galton-Watson forest so does $\mathfrak{F}:=((\mathfrak{T}_i,(\Type{x};\; x\in\mathfrak{T}_i));\; i\in\N^*)$. \\
Introduce
\begin{align}
    \mathfrak{R}:=\sum_{x\in\mathfrak{T}_1}\un_{\{|x|=1\}}\;\;\textrm{ and }\;\;\mathfrak{R}^1:=\sum_{x\in\mathfrak{T}_1}\un_{\{|x|=1,\; \mathfrak{t}(x)=1\}},
\end{align}
respectively the reproduction law and the reproduction law restricted to vertices of type 1 associated to this forest.

\begin{lemm}\label{Fact1} We have:
\begin{enumerate}[label=(\roman*)] 
    \item\label{Fact1.1} $\Ebis[\mathfrak{R}]=2\nu$;
    \item\label{Fact1.2} $\Ebis[\mathfrak{R}^1]=1$.
\end{enumerate}
\end{lemm}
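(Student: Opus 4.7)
The plan is to read off $\mathfrak{R}$ and $\mathfrak{R}^1$ as explicit functionals of $(\mathcal{T}_1,(\beta(x);\,x\in\mathcal{T}_1))$ by tracing the two-step construction of $\mathfrak{T}_1$, and then invoke hypotheses \ref{H1}, \ref{H2} and \ref{H3}.

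Set $A:=\{u\in\mathcal{T}_1:G^1_1(u)=1\}$, $A_1:=\{u\in A:\beta(u)=1\}$ and $A_0:=A\setminus A_1$. I would first identify the children of $\VectCoord{e}{1}$ in $\mathfrak{T}_1$. They arise from three disjoint mechanisms: (a) the images in $\Tilde{\mathcal{T}}_1$ of the elements of $A$, of which $|A_1|$ inherit type $1$ and $|A_0|$ inherit type $0$; (b) since \ref{H1} gives $\beta(\VectCoord{e}{1})=1$ and hence $\mathfrak{t}(\VectCoord{e}{1})=1$, the $\beta^{\star}(\VectCoord{e}{1})-1$ type-$0$ children appended at $\VectCoord{e}{1}$; (c) for each $u\in A_0$, the $\beta^{\star}(u)-1$ type-$0$ vertices appended as children of the $\Tilde{\mathcal{T}}_1$-parent of $u$, which is precisely $\VectCoord{e}{1}$ since $u$ lies at generation $1$ of $\Tilde{\mathcal{T}}_1$. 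This yields
\[
\mathfrak{R}=|A|+\bigl(\beta^{\star}(\VectCoord{e}{1})-1\bigr)+\sum_{u\in A_0}\bigl(\beta^{\star}(u)-1\bigr),\qquad \mathfrak{R}^1=|A_1|,
\]
and \ref{Fact1.2} is then immediate from \ref{H2}.

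For \ref{Fact1.1} the key combinatorial step is the disjoint partition
\[
A=\{y\in\mathcal{T}_1:y^{*}=\VectCoord{e}{1}\}\;\cup\;\{y\in\mathcal{T}_1:y^{*}\in A_0\},
\]
which follows from the recursion $G^1_1(y)=G^1_1(y^{*})+\un_{\{\beta(y^{*})=1\}}$ together with $\beta(\VectCoord{e}{1})=1$ and $G^1_1(\VectCoord{e}{1})=0$. Using this identity and the elementary expansion $\beta^{\star}(v)=\beta(v)+\sum_{y:y^{*}=v}\beta(y)$, the parent sums arising from $\beta^{\star}(\VectCoord{e}{1})-1$ and from $\sum_{u\in A_0}\beta^{\star}(u)$ merge into $\sum_{y\in A}\beta(y)$. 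Combining this with $\beta\equiv 1$ on $A_1$, the whole expression collapses to
\[
\mathfrak{R}=2\sum_{u\in A}\beta(u)=2\sum_{u\in\mathcal{T}_1}\beta(u)\un_{\{G^1_1(u)=1\}},
\]
and \ref{H3} delivers $\Ebis[\mathfrak{R}]=2\nu$.

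There is no serious analytical difficulty; the only delicate point is the bookkeeping in step (c). One must observe that the type-$0$ extras attached to $u\in A_0$ become siblings of $u$ (i.e.\ children of $\VectCoord{e}{1}$) rather than children of $u$, so that they contribute to the generation-$1$ count $\mathfrak{R}$; overlooking this would lose the factor $2$ in \ref{Fact1.1}.
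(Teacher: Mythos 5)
Your proof is correct and follows essentially the same route as the paper: you decompose the generation-one vertices of $\mathfrak{T}_1$ into the images of $\{G^1_1=1\}$, the extra type-$0$ children attached at the root, and the siblings attached next to the type-$0$ leaves, and then collapse everything to $\mathfrak{R}=2\sum_{x\in\mathcal{T}_1}\beta(x)\un_{\{G^1_1(x)=1\}}$ via the observation that children of vertices in $A_0$ are exactly the elements of $A$ of generation larger than one, which is the paper's identity as well. Part \ref{Fact1.2} is handled identically (the construction leaves the type-$1$ count at generation one unchanged), so nothing is missing.
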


\begin{proof}
    \noindent For \ref{Fact1.1}, by definition
\begin{align*}
    \mathfrak{R}&=\beta^{\star}(\VectCoord{e}{1})-1+\sum_{x\in\mathcal{T}_1}\beta^{\star}(x)\un_{\{G^1_1(x)=1,\; \beta(x)\not=1\}}+\sum_{x\in\mathcal{T}_1}\un_{\{G^1_1(x)=1,\; \beta(x)=1\}}\\ & = \sum_{x\in\mathcal{T}_1;\; |x|=1}\beta(x)\un_{\{G^1_1(x)=1\}}+\sum_{x\in\mathcal{T}_1}\;\sum_{y\in\mathcal{T}_1;\; y^*=x}\beta(y)\un_{\{G^1_1(x)=1,\; \beta(x)\not=1\}}+\sum_{x\in\mathcal{T}_1
    }\beta(x)\un_{\{G^1_1(x)=1\}},
\end{align*}
where we have used, for the second equality, that $\beta(\VectCoord{e}{1})=1$ implying that for any $x\in\mathcal{T}_1$ such that $|x|=1$, we have necessarily $G^1_1(x)=1$. Now, it suffices to note that 
\begin{align*}
    \sum_{x\in\mathcal{T}_1}\;\sum_{y\in\mathcal{T}_1;\; y^*=x}\beta(y)\un_{\{G^1_1(x)=1,\; \beta(x)\not=1\}}&=\sum_{x\in\mathcal{T}_1}\;\sum_{y\in\mathcal{T}_1;\; y^*=x}\beta(y)\un_{\{G^1_1(y)=1\}}\\ & =\sum_{x\in\mathcal{T}_1;\; |x|>1}\beta(x)\un_{\{G^1_1(x)=1\}},
\end{align*}
to get $\mathfrak{R}=2\sum_{x\in\mathcal{T}_1}\beta(x)\un_{\{G^1_1(x)=1\}}$ which yields $\Ebis[\mathfrak{R}]=2\nu$. \\
For \ref{Fact1.2}, one can notice that none of the modifications we made to the initial forest $(\mathcal{T}_1,(\beta(x);\; x\in\mathcal{T}_1))$ has an incidence on the number of vertices of type $1$:
\begin{align*}
    \sum_{x\in\mathcal{T}_1}\un_{\{G^1_1(x)=1,\; \beta(x)=1\}}=\sum_{x\in\Tilde{\mathcal{T}_1}}\un_{\{|x|=1,\; \VectCoord{\beta}{1}(x)=1\}}=\sum_{x\in\mathfrak{T}_1}\un_{\{|x|=1,\; \Type{x}=1\}},
\end{align*}
thus giving the result since $\Ebis[\sum_{x\in\mathcal{T}_1}\un_{\{G^1_1(x)=1,\; \beta(x)=1\}}]=1$.
\end{proof}
\noindent We are now ready to prove our proposition.
\begin{proof}[Proof of Proposition \ref{PropArbresMulti}]
First, introduce the forest $\mathfrak{F}^1:=\{x\in\mathfrak{F};\; \Type{x}=1\}$ with reproduction law $\mathfrak{R}^1$ and for any $p\in\N^*$, denote by $\mathfrak{F}^1_p$ (resp. $\mathfrak{F}_p$) the finite forest composed of the $p$ first trees of $\mathfrak{F}^1$ (resp. $\mathfrak{F}$). Moreover, $\#\mathfrak{F}^1_p$ (resp. $\#\mathfrak{F}_p$) stands for its cardinal. One can note that, by construction, $F_p=\sum_{x\in\mathcal{F}_p}\beta^{\star}(x)=\#\mathfrak{F}_p$ (with the convention $F_0=0$). Moreover, we can easily link $\#\mathfrak{F}^1$ to $\#\mathfrak{F}$. Indeed, if, for any $j\in\N$, $\mathfrak{R}(j)$ denotes the number of children of the $(j+1)$-th vertex of the forest $\mathfrak{F}$, then we have
\begin{align}\label{ForestType}
    \#\mathfrak{F}_p=p+\mathfrak{D}(\#\mathfrak{F}^1_p)\;\;\textrm{ with }\;\;\mathfrak{D}(k):=\sum_{j=0}^{k-1}\mathfrak{R}(j)\;\;\forall\; k\in\N^*\;\;\textrm{ and }\;\; \mathfrak{D}(0):=0.
\end{align}
The reason why \eqref{ForestType} holds is that, by construction, if the $(j+1)$-th vertex $x$ of $\mathfrak{F}$ is satisfies $\mathfrak{t}(x)=0$, then $\mathfrak{R}(j)=0$. If $\mathfrak{R}^1(j)$ denotes the number of children of the $(j+1)$-th vertex of $\mathfrak{F}^1$, then $V^1(k):=\sum_{j=0}^{k-1}(\mathfrak{R}^1(j)-1)$ is the Lukasiewicz path associated to the forest $\mathfrak{F}^1$. One can notice that $\#\mathfrak{F}^1_p=\inf\{k\in\N^*;\; -V^1_k=p\}=\inf\{k\in\N^*;\; -V^1_k\geq p\}$. \\
Let us now recall that for any $m\in\N^*$, $\Bar{F}(m)=\sup\{p\in\N;\; F_{p}\leq m\}$. As we did for $F_p$, we want to link $\Bar{F}(m)$ with the Lukasiewicz path associated to $\mathfrak{F}^1$. For that, let us introduce
\begin{align}\label{InvDQ}
    \Bar{\mathfrak{D}}(m):=\sup\{k\in\N;\; \mathfrak{D}(k)\leq m\}.
\end{align}
By definition, we have, for any $\mathfrak{g}<m$
\begin{align}\label{EncadreSupForest}
    \mathfrak{g}\land\max_{1\leq k\leq\Bar{\mathfrak{D}}(m-\mathfrak{g})}\big(-V^1_k\big)\leq \Bar{F}(m)\leq\max_{1\leq k\leq\Bar{\mathfrak{D}}(m)}\big(-V^1_k\big).
\end{align}
Hence, to obtain Proposition \ref{PropArbresMulti}, we need to know the behaviour of $-V^1$. Thanks to a well known invariance principal (depending on the behavior of the reproduction law $\mathfrak{R}^1$, which is nothing but $\sum_{x\in\mathcal{T}_1}\un_{\{G^1_1(x)=1,\; \beta(x)=1\}}$), we have, in law under $\Pbis$, for the Skorokhod topology on $D([0,\infty))$
\begin{itemize}
    \item if $\Ebis[(\mathfrak{R}^1)^2]<\infty$, then the variance $\sigma_1^2$ of $\mathfrak{R}^1$ exists, belongs to $(0,\infty)$ and
        \begin{align*}
            \Big(-\frac{1}{\sqrtBis{n}}V^{1}(\lfloor nt\rfloor);\;t\geq 0\Big)\underset{n\to\infty}{\longrightarrow}\sqrtBis{(\sigma_1^2)}B,
        \end{align*}
        \item and if $\lim_{m\to\infty}m^{\gamma}\Pbis(\mathfrak{R}^1>m)=\mathfrak{c}_{\gamma}\in(0,\infty)$ exists for some $\gamma\in(1,2]$, then 
        \begin{align}\label{ConvLuka2}
            \Big(-\frac{1}{\gamma_n}V^{1}(\lfloor nt\rfloor);\; t\geq 0\Big)\underset{n\to\infty}{\longrightarrow}\Bar{\mathfrak{c}}_{\gamma}\VectCoord{\Tilde{Y}}{\gamma},
        \end{align}
\end{itemize}
where, if $\gamma=2$, then $\gamma_n=\sqrtBis{(n\log n)}$, $\Bar{\mathfrak{c}}_{\gamma}=\sqrtBis{(c_{2})}$ and $\VectCoord{\Tilde{Y}}{\gamma}=B$ is a standard Brownian motion and if $\gamma\in(1,2)$, then $\gamma_n=n^{1/\gamma}$, $\Bar{\mathfrak{c}}_{\gamma}=(\mathfrak{c}_{\gamma}|\Gamma(1-\gamma)|)^{1/\gamma}$ and $\VectCoord{\Tilde{Y}}{\gamma}=\VectCoord{Y}{\gamma}$ is a Lévy process with no positive jump such that for any $\lambda\geq 0$, $\mathtt{E}[e^{\lambda \VectCoord{Y}{\gamma}_t}]=e^{t\lambda^{\gamma}}$. Note that $\VectCoord{Y}{\gamma}$ has no positive jump because $\mathfrak{R}^1\geq 0$ which implies that the law of $\mathfrak{R}^1-1$ is supported on $\N\cup\{-1\}$. From now, without loss of generality, we assume that $\lim_{m\to\infty}m^{\gamma}\Pbis(\mathfrak{R}^1>m)=\mathfrak{c}_{\gamma}\in(0,\infty)$ exists for some $\gamma\in(1,2]$. Recall that one of our purposes is to prove the joint convergence in law, under $\Pbis$, for the Skorokhod product topology on $D([0,\infty))\times D([0,\infty))$ of 
$((\frac{1}{n}F_{\lfloor\alpha\gamma_n\rfloor};\; \alpha\geq 0),(\frac{1}{\gamma_n}\Bar{F}({\lfloor tn\rfloor});\; t\geq 0))_{n\in\N^*}$. For that, it is enough to prove the convergence in law for the Skorokhod topology on $D([0,\infty))$ of each component since the joint finite-dimensional convergence will follows immediately. 

\vspace{0.3cm}

\noindent\textbf{Convergence of $((\frac{1}{n}F_{\lfloor\alpha\gamma_n\rfloor};\; \alpha\geq 0))_{n\in\N^*}$}

\vspace{0.2cm}

\noindent Thanks to \eqref{ConvLuka2}, we obtain that for all $\alpha\geq 0$, $(\frac{1}{n}\#\mathfrak{F}^1_{\lfloor\alpha\gamma_n\rfloor})_{n\in\N^*}$ converges in law to $(\tau_{\alpha}(\Bar{\mathfrak{c}}_{\gamma}\VectCoord{\Tilde{Y}}{\gamma});\; \alpha\geq 0)$, having the same law as the process $(\tau_{\alpha}(\VectCoord{\Tilde{Y}}{\gamma})/(\Bar{\mathfrak{c}}_{\gamma})^{\gamma\land 2};\; \alpha\geq 0)$ thanks to the scaling property, see Remark \ref{RemTA}. We then deduce the convergence of $((\frac{1}{n}\#\mathfrak{F}^1_{\lfloor\alpha\gamma_n\rfloor};\; \alpha\geq 0))_{n\in\N^*}$. Indeed, for any $p'\in\N$, by the strong Markov property applied to $V^1$ at $\#\mathfrak{F}^1_{p'}$, we have that $(\#\mathfrak{F}^1_{p+p'}-\#\mathfrak{F}^1_{p'})_{p\in\N}$ is distributed as $(\#\mathfrak{F}^1_p)_{p\in\N}$ and is independent of $(\#\mathfrak{F}^1_p)_{p\leq p'}$. Moreover, using that $\#\mathfrak{F}^1_p=\inf\{k\in\N^*;\; -V^1_k\geq p\}$, we have, for any $\Bar{p}\in\N^*$, that $\sup_{p,p'\in\N^*;\; p\leq\Bar{p}}(\#\mathfrak{F}^1_{p+p'}-\#\mathfrak{F}^1_{p'})$ is smaller than a random variable distributed as $\#\mathfrak{F}^1_{\Bar{p}}$. Having said that, we deduce the finite-dimensional convergence of the sequence of processes. We also deduce that, for any $\varepsilon,\varepsilon_1>0$, $n\in\N^*$ and $\gamma_n>1/\varepsilon$ 
\begin{align*}
    \Pbis\Big(\sup_{\alpha,\beta\geq 0;\; |\alpha-\beta|<\varepsilon}\big|\#\mathfrak{F}^1_{\lfloor\gamma_n\alpha\rfloor}-\#\mathfrak{F}^1_{\lfloor\gamma_n\beta\rfloor}\big|\geq n\varepsilon_1\Big)\leq\Pbis\big(\#\mathfrak{F}^1_{\lfloor 2\gamma_n\varepsilon\rfloor}\geq n\varepsilon_1\big), 
\end{align*}
so we obtain that $\lim_{\varepsilon\to0}\limsup_{n\to\infty}\Pbis(\sup_{\alpha,\beta\geq 0;\; |\alpha-\beta|<\varepsilon}|\#\mathfrak{F}^1_{\lfloor\gamma_n\alpha\rfloor}-\#\mathfrak{F}^1_{\lfloor\gamma_n\beta\rfloor}|\geq n\varepsilon_1)$ is smaller than $\lim_{\varepsilon\to0}\mathtt{P}(\tau_{2/\Bar{\mathfrak{c}}_{\gamma}}(\VectCoord{\Tilde{Y}}{\gamma})\geq\varepsilon_1/\varepsilon^{\kappa\land 2})=0$. It is easy to see that $(\sup_{\alpha\in[0,M]}\#\mathfrak{F}^1_{\lfloor\gamma_n\alpha\rfloor}/n)_{n\in\N^*}$ is tight in $\R^+$ and finally yields the convergence of $((\frac{1}{n}\#\mathfrak{F}^1_{\lfloor\alpha\gamma_n\rfloor};\; \alpha\geq 0))_{n\in\N^*}$ thanks to \cite{BillingsleyBis} (Theorem 16.8 and equation (12.7)). \\
We are left to prove that the sequence of processes $((\mathfrak{D}(\lfloor nt\rfloor)/n;t\geq 0))_{n\in\N^*}$ goes to $(2\nu t; t\geq 0)$ in law, under $\Pbis$, for the Skorokhod topology on $D([0,\infty))$, where we recall that $\mathfrak{D}(k)$ is defined in \eqref{ForestType}. By Lemma \ref{Fact1} \ref{Fact1.1}, $\Ebis[\mathfrak{R}(j)]=\Ebis[\mathfrak{R}]=2\nu$ so the law of large numbers gives the finite-dimensional convergence. Moreover, since, for any $n\in\N^*$, the function $t\mapsto\mathfrak{D}(\lfloor nt\rfloor)/n$ is non-decreasing and the function $t\mapsto 2\nu t$ is continuous, Dini's theorem yields, for all $\varepsilon_1,M>0$, $\lim_{n\to\infty}\Pbis(\sup_{t\in[0,M]}|\frac{1}{n}\mathfrak{D}(\lfloor nt\rfloor)-2\nu t|>\varepsilon_1)=0$ thus giving the tightness of the sequence $(\sup_{t\in[0,M]}\mathfrak{D}(\lfloor nt\rfloor)/n)_{n\in\N^*}$ and yields 
\begin{align*}
    \lim_{\varepsilon\to 0}\limsup_{n\to\infty}\Pbis(\sup_{t,s\in[0,M];\; |t-s|<\varepsilon}|\mathfrak{D}(\lfloor nt\rfloor)-\mathfrak{D}(\lfloor ns\rfloor)|>n\varepsilon_1)=0,
\end{align*}
which gives the tightness of $((\mathfrak{D}(\lfloor nt\rfloor)/n;t\geq 0))_{n\in\N^*}$. \\
The convergence of $((\frac{1}{n}F_{\lfloor\alpha\gamma_n\rfloor};\; \alpha\geq 0))_{n\in\N^*}$ finally follows from \eqref{ForestType} together with the Skorokhod's representation theorem and using that $\gamma_n=o(n)$.

\vspace{0.3cm}

\noindent\textbf{Convergence of $(\frac{1}{\gamma_n}\Bar{F}({\lfloor tn\rfloor});\; t\geq 0))_{n\in\N^*}$}

\vspace{0.2cm}

\noindent Note that, thanks to \eqref{EncadreSupForest}, taking $\mathfrak{g}=n^{\theta}$ with $\theta\in(1/(\kappa\land 2),1)$, it is enough to prove the convergence of $((\max_{1\leq k\leq\Bar{\mathfrak{D}}(\lfloor nt\rfloor)}(-V^1_k)/\gamma_n;\; t\geq 0))_{n\in\N^*}$. Observe that $((\max_{1\leq k\leq\lfloor nt\rfloor}(-V^1_k)/\gamma_n;\; t\geq 0))_{n\in\N^*}$ converges in law to $((\Bar{\mathfrak{c}}_{\gamma}S(t,\VectCoord{\Tilde{Y}}{\gamma});\; t\geq 0))_{n\in\N^*}$. Indeed, the finite-dimensional convergence follows immediately from \eqref{ConvLuka2} and for the tightness of the sequence of processes, note that for any $p,p',\Bar{p}\in\N^*$ such that $p\leq\Bar{p}$, $\max_{1\leq k\leq p'+p}(-V^1_k)-\max_{1\leq k\leq\Bar{p}}(-V^1_k)\leq\max_{1\leq k\leq p}(-V^1_{k+p'}+V^1_{p'})$ so by the Markov property, for all $\varepsilon,\varepsilon_1>0$
\begin{align*}
    \Pbis\Big(\sup_{t,s\geq 0;\; |t-s|<\varepsilon}\Big|\max_{1\leq k\leq\lfloor nt\rfloor}(-V^1_k)-\max_{1\leq k\leq\lfloor ns\rfloor}(-V^1_k)\Big|\geq \gamma_n\varepsilon_1\Big)\leq\Pbis\Big(\max_{1\leq k\leq\lfloor2n\varepsilon\rfloor}(-V^1_k)\geq\gamma_n\varepsilon_1\Big), 
\end{align*}
so we obtain that $\lim_{\varepsilon\to0}\limsup_{n\to\infty}\Pbis(\sup_{t,s\geq 0;\; |t-s|<\varepsilon}|\max_{1\leq k\leq\lfloor nt\rfloor}(-V^1_k)-\max_{1\leq k\leq\lfloor ns\rfloor}(-V^1_k)|\geq \gamma_n\varepsilon_1)$ is smaller that $\lim_{\varepsilon\to0}\mathtt{P}(S(2(\Bar{\mathfrak{c}}_{\gamma})^{\gamma},\VectCoord{\Tilde{Y}}{\gamma})>\varepsilon_1/\varepsilon^{1/\gamma})=0$. \\
We are left that to prove that $((\Bar{\mathfrak{D}}(\lfloor nt\rfloor)/n;\; t\geq 0))_{n\in\N^*}$ converges in law to $(t/(2\nu);\; t\geq 0)$, where we recall the definition of $\Bar{\mathfrak{D}}$ in \eqref{InvDQ}. This convergence also follows from Dini's theorem, using the convergence of $(\mathfrak{D}(n)/n)_{n\in\N^*}$ to $2\nu$, the fact that $t\mapsto\Bar{\mathfrak{D}}(\lfloor nt\rfloor)$ is non-decreasing for all $n\in\N^*$ and that $t\mapsto t/(2\nu)$ is continuous. We finally deduce the convergence of $(\Bar{F}({\lfloor tn\rfloor})/\gamma_n;\; t\geq 0))_{n\in\N^*}$ using the Skorokhod's representation and the scaling property of $\VectCoord{Y}{\gamma}$.

\vspace{0.3cm}

\noindent The last step consists on showing that for any $M,\varepsilon_1>0$
\begin{align*}
    \lim_{n\to\infty}\Pbis\Big(\sup_{1\leq p\leq\lfloor M\gamma_n\rfloor}\Big|\sum_{x\in\mathcal{F}_p}1-\frac{\Tilde{\nu}}{2\nu}F_p\Big|>\varepsilon_1 n\Big)=0.
\end{align*}
We already know that for any $p\in\N^*$, $F_p=p+\mathfrak{D}(\#\mathfrak{F}^1_p)$, see \eqref{ForestType} and also that for any $M',\varepsilon_2>0$, $\lim_{n\to\infty}\Pbis(\sup_{1\leq k\leq\lfloor M'n\rfloor}|2\nu k-\mathfrak{D}(k)|>n\varepsilon_2)=0$. Similarly, one can see that $\sum_{x\in\mathcal{F}_p}1=\sum_{x\in\Tilde{\mathcal{F}}_p}1=p+\Tilde{\mathfrak{D}}(\#\mathfrak{F}^1_p)$ where $\Tilde{\mathcal{F}}_p$ is the forest composed of the first $p$ trees of the forest $(\Tilde{\mathcal{T}}_i)_{i\in\N^*}$, see \ref{ProofProp}, and $\Tilde{\mathfrak{D}}(k):=\sum_{j=0}^{k-1}\Tilde{R}(j)$, with $\Tilde{R}(j)$ the number of children of the $(j+1)$-th vertex (for the depth first order) of the forest $(\Tilde{\mathcal{T}}_i)_{i\in\N^*}$. Moreover, by definition, $\Ebis[\Tilde{R}(j)]=\Tilde{\nu}$ for all $j\in\N$ and we have $\lim_{n\to\infty}\Pbis(\sup_{1\leq k\leq\lfloor M'n\rfloor}|\Tilde{\mathfrak{D}}(k)-\Tilde{\nu}k|>n\varepsilon_2)=0$ thus giving 
\begin{align*}
    \lim_{n\to\infty}\Pbis\Big(\sup_{1\leq k\leq\lfloor M'n\rfloor}\Big|\Tilde{\mathfrak{D}}(k)-\frac{\Tilde{\nu}}{2\nu}\mathfrak{D}(k)\Big|>n\varepsilon_2\Big)=0.
\end{align*}
Hence, decomposing according to the value of $\sup_{1\leq p\leq\lfloor M\gamma_n\rfloor}\#\mathfrak{F}^1_p=\#\mathfrak{F}^1_{\lfloor M\gamma_n\rfloor}$
\begin{align*}
    &\Pbis\Big(\sup_{1\leq p\leq\lfloor M\gamma_n\rfloor}\Big|\Tilde{\mathfrak{D}}(\#\mathfrak{F}^1_p)-\frac{\Tilde{\nu}}{2\nu}\mathfrak{D}(\#\mathfrak{F}^1_p)\Big|>\varepsilon_1 n\Big) \\ & \leq\Pbis\Big(\sup_{1\leq p\leq\lfloor M\gamma_n\rfloor}\Big|\Tilde{\mathfrak{D}}(\#\mathfrak{F}^1_p)-\frac{\Tilde{\nu}}{2\nu}\mathfrak{D}(\#\mathfrak{F}^1_p)\Big|>\varepsilon_1 n,\; \#\mathfrak{F}^1_{\lfloor M\gamma_n\rfloor}\leq\lfloor M'n\rfloor\Big)+\Pbis\big(\#\mathfrak{F}^1_{\lfloor M\gamma_n\rfloor}>\lfloor M'n\rfloor\big) \\ & \leq \Pbis\Big(\sup_{1\leq k\leq\lfloor M'n\rfloor}\Big|\Tilde{\mathfrak{D}}(k)-\frac{\Tilde{\nu}}{2\nu}\mathfrak{D}(k)\Big|>n\varepsilon_2\Big)+\Pbis\big(\#\mathfrak{F}^1_{\lfloor M\gamma_n\rfloor}>\lfloor M'n\rfloor\big),
\end{align*}
so the probability $\Pbis(\sup_{1\leq p\leq\lfloor M\gamma_n\rfloor}|\Tilde{\mathfrak{D}}(\#\mathfrak{F}^1_p)-\frac{\Tilde{\nu}}{2\nu}\mathfrak{D}(\#\mathfrak{F}^1_p)|>\varepsilon_1 n)$ goes to $0$ as $n\to\infty$ taking $M'$ large enough and this ends the proof.
\end{proof}

\vspace{0.5cm}

\noindent\begin{merci}
    This work is funded by Hua Loo-Keng Center for Mathematical Sciences (AMSS, Chinese Academy of Sciences) and supported by the National Natural Science Foundation of China (No. 12288201).
\end{merci}

\bibliographystyle{alpha}
\bibliography{thbiblio}

\end{document}